\DeclareFontFamily{OT1}{ptm}{}
\newtheorem{theorem}{Theorem}[section]
\newtheorem{lemma}[theorem]{Lemma}
\newtheorem{cor}[theorem]{Corollary}
\newtheorem{question}[theorem]{Question}
\newtheorem{prop}[theorem]{Proposition}
\newtheorem{example}[theorem]{Example}
\newtheorem{definition}[theorem]{Definition}
\newtheorem{conjecture}[theorem]{Conjecture}
\theoremstyle{remark}
\newtheorem{remark}[theorem]{Remark}
\newcommand{\Z}{\mathbb{Z}}
\newcommand{\mc}{\mathcal}
\newcommand{\ZZ}{\mathbb{Z}}
\newcommand{\QQ}{\mathbb{Q}}
\newcommand{\CC}{\mathbb{C}}
\newcommand{\RR}{\mathbb{R}}
\newcommand{\PP}{\mathbb{P}}
\newcommand{\tr}{{\rm tr}}
\title{Circular spherical divisors and their contact topology} 
\author{Tian-Jun Li, Cheuk Yu Mak, Jie Min}
\date{\today}
\begin{document}
\maketitle
\begin{abstract}
	This paper investigates the symplectic and contact topology associated to circular spherical divisors. We classify, up to toric equivalence, all concave circular spherical divisors $ D $ that can be embedded symplectically into a closed symplectic 4-manifold and show they are all realized as symplectic log Calabi-Yau pairs if their complements are minimal. 
	We then determine the Stein fillability and rational homology type of all minimal symplectic fillings for the boundary torus bundles of such $D$.
	When $ D $ is anticanonical and convex, we give explicit Betti number bounds for Stein fillings of its boundary contact torus bundle. 
\end{abstract}


{\tableofcontents}

\section{Introduction}

Let $ X $ be a smooth rational surface and let $ D\subset X $ be an effective reduced anti-canonical divisor. Such pairs $ (X,D) $ are called anti-canonical pairs and have been extensively studied since Looijenga (\cite{Lo81}). 
Gross, Hacking and Keel studied the mirror symmetry aspects of anti-canonical pairs in \cite{GrHaKe11} and \cite{GrHaKe12}. In particular, they proved Looijenga's conjecture on dual cusp singularities in \cite{GrHaKe11} and Torelli type results in \cite{GrHaKe12} conjectured by Friedman. 
The notion of symplectic log Calabi-Yau pairs, as a symplectic analogue of anti-canonical pairs, was defined by the first and the second authors in \cite{LiMa16-deformation}, where they studied different notions of deformation equivalence and enumerated minimal models.

In this paper, we present a comprehensive study of circular spherical divisors, which are generalizations of symplectic log Calabi-Yau pairs. In particular, we are interested in the following natural question on the embeddability and rigidity of circular spherical divisors.
\begin{question}\label{question}
	Given a circular spherical divisor $ D $, can we symplectically embed $ D $ into a closed symplectic 4-manifold $ (X,\omega ) $ so that $ (X,\omega, D) $ is a symplectic log Calabi-Yau pair?
\end{question}
The Looijenga conjecture can be seen as an answer to this question for negative definite divisor in terms of its dual (see Section \ref{section:neg def}). We give a complete answer (up to toric equivalence) for all other cases, i.e. when $D$ is strictly negative semi-definite or $b^+(D)\ge 1$.
Then we apply such embeddability and rigidity of circular spherical divisors to the study of symplectic fillability and the topology of minimal symplectic fillings of contact torus bundles.
In particular, we give proofs for many results previously announced in \cite{LiMa19-survey} and \cite{LiMi-ICCM}.



\subsection{Circular spherical divisors, embeddability and rigidity}
By a topological divisor, we mean a connected configuration of finitely many closed embedded oriented smooth surfaces $D=C_1 \cup \dots \cup C_r$ 
in a smooth oriented 4 dimensional manifold $ X $ (possibly with boundary or non-compact), satisfying the following conditions:  
(1) each intersection between two components is positive and transversal,
(2) no three components intersect at a common point,
and (3) $ D $ does not intersect the boundary $ \partial X $.
Since we are interested in the germ of a topological divisor, we usually omit $ X $ in the writing and just denote the divisor by $ D $. 
To a topological divisor $D$, we can associate a decorated graph called the dual graph, with vertices corresponding to the surfaces and edges corresponding to intersection points. Each vertex is decorated by the genus and self-intersection of the corresponding surface. A topological divisor is determined by its dual graph.
For each $ D $ denote by $N_D$ the neighborhood obtained by plumbing disk bundles according to its dual graph and $Y_D=\partial N_D$ the plumbed 3-manifold oriented as the boundary of $ N_D $. They are well-defined up to orientation-preserving diffeomorphisms (\cite{Ne81-calculus}).

An intersection matrix is associated to each topological divisor. For a topological divisor $ D=\cup_{i=1}^r C_i $, we denote by $[C_i]$ the homology class of $C_i$ in $ H_2(X)  $.
Note that $H_2(N_D)$ is freely generated by $[C_i]$. The intersection matrix of $D$ is the $r$ by $r$ square matrix $Q_D=(s_{ij}=[C_i]\cdot [C_{j}])$, where $\cdot$ is used for any of the pairings $H_2(X) \times H_2(X), H^2(X) \times H_2(X), H^2(X) \times H^2(X, \partial X)$. Via the Lefschetz duality for  $N_D$, the intersection matrix $Q_D$ can  be identified with the natural  homomorphism $Q_D: H_2(N_D)\to H_2(N_D, Y_D)$.  We use homology and cohomology with $\Z$ coefficient unless otherwise specified. 

In a symplectic 4-manifold $(X, \omega)$, a symplectic divisor is a topological divisor $D$ embedded in $X$, with each $C_i$  being symplectic and having the orientation positive with respect to $\omega$. 
A \textbf{symplectic log Calabi-Yau pair} $(X,D,\omega)$ is a closed symplectic 4-manifold $(X,\omega)$ together with a nonempty symplectic divisor $D=\cup C_i$ representing the  Poincare dual of $c_1(X,\omega )$.
It's an easy observation (\cite{LiMa19-survey}) that $ D $ is either a torus or a cycle of spheres. In the former case, $ (X,D,\omega ) $ is called an \textbf{elliptic log Calabi-Yau pair}. In the later case, it's called a \textbf{symplectic Looijenga pair} and it can only happen when $ (X,\omega ) $ is rational. As a consequence, we have $ b^+(Q_D)=0 $ or $ 1 $. We also remark that symplectic log Calabi-Yau pairs have vanishing relative symplectic Kodaira dimension (cf. \cite{LiZh11-relative},\cite{LiMi-logkod}).

We call a topological divisor $ D $ consisting of a cycle of spheres a \textbf{circular spherical divisor} and a \textbf{symplectic circular spherical divisor} if such $ D $ is a symplectic divisor. 
For each circular spherical divisor $ D=\cup_{i=1}^r C_i $, we associate to it a self-intersection sequence $ (s_i=[C_i]^2)_{i=1}^{r} $. 
For a circular spherical divisor, its dual graph is a cycle, which comes with cyclic and anti-cyclic symmetries. A circular spherical divisor is determined by its self-intersection sequence (up to (anti-)cyclic permutation) and given any sequence $ (s_1,\dots,s_r) $ with $ s_i\in \ZZ $, it can always be realized as the self-intersection sequence of a circular spherical divisor (via plumbing disk bundles). 
Essentially there is no difference between a sequence and a circular spherical divisor (up to (anti-)cyclic permutation of the labeling) and we denote by $D=(s_1,\dots,s_r)$ the circular spherical divisor with self-intersection sequence $(s_1,\dots,s_r)$. We also denote by $ D=(s) $ the torus with self-intersection $ s $.
This does not cause any confusion as we always require that a circular spherical divisor has length at least $ 2 $. We define the $ b^+/b^-/b^0 $ of a divisor $ D $ to be the $ b^+/b^-/b^0 $ of $ Q_D $, i.e. the number of positive/negative/zero eigenvalues. 

Circular spherical divisors generalize symplectic Looijenga pairs as they are not required to be embedded in a closed symplectic 4-manifold or to represent the Poincare dual of the first Chern class.
They are the main objects of study in this paper and we are interested in the following properties.
A circular spherical divisor $D$ is called \textbf{symplectically embeddable} if $ D $ admits a symplectic embedding into a closed symplectic 4-manifold $ (X,\omega ) $. A symplectically embeddable $D$ is called \textbf{rationally embeddable} if such $ (X,\omega ) $ can be chosen to be a symplectic rational surface, i.e. $ X\cong \CC\PP^2\# l\overline{\CC\PP}^2 $ or $S^2\times S^2$. A rationally embeddable $D$ is called \textbf{anti-canonical} if $ (X,D,\omega ) $ is a symplectic Looijenga pair for some $(X,\omega)$. An anti-canonical $D$ is called \textbf{rigid} if for any symplectic embedding of $ D$ into a symplectic 4-manifold $ (X,\omega ) $ with $ X-D $ minimal, $ (X,D,\omega ) $ is a symplectic Looijenga pair. Note that the complement of an anti-canonical $ D $ is by definition minimal.
\begin{remark}\label{rmk:not rigid}
	Observe that if $ D $ embeds into a non-rational symlectic manifold, $ D $ cannot be rigid. This is because we could assume its complement is minimal by blowing down but $ D $ cannot be anti-canonical as the ambient manifold is not rational.
\end{remark}

In terms of above terminalogy, Question \ref{question} asks for a criterion to decide whether a circular spherical divisor is anti-canonical. 
The main theorem of our paper is the following.
\begin{theorem}\label{thm:embeddable=rigid}
	For a circular spherical divisor with $ b^+(D)\ge 1 $, being symplectically embeddable, rationally embeddable, anti-canonical, and rigid are equivalent.
\end{theorem}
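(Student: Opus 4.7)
The implications $(4) \Rightarrow (3) \Rightarrow (2) \Rightarrow (1)$ are immediate from the definitions (rigidity requires being anti-canonical, which requires a rational embedding, which is a special symplectic embedding). The substantive direction is $(1) \Rightarrow (4)$: I will argue that \emph{any} symplectic embedding $D \hookrightarrow (X,\omega)$ with $X\setminus D$ minimal automatically produces a symplectic Looijenga pair. The argument splits into three steps.

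\textbf{Step 1: $(X,\omega)$ must be rational.} Because any plumbing cycle of spheres with every self-intersection $\le -2$ has a negative semi-definite intersection form, the hypothesis $b^+(D)\ge 1$ forces some component $C_i$ to satisfy $[C_i]^2 \ge -1$. If $[C_i]^2 \ge 0$ for some $i$, McDuff's theorem on symplectic spheres of non-negative self-intersection yields that $(X,\omega)$ is rational or ruled. If only $-1$-components appear, each such $C_i$ is exceptional; minimality of $X\setminus D$ prevents blowing down spheres disjoint from $D$, but we may still contract $C_i$ \emph{as a component of} $D$, producing a new circular spherical divisor $D'$ in a blow-down $X'$ where the two neighbors of $C_i$ have self-intersection increased by one and now meet each other. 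Track $b^+$ under this move and iterate: either a component with non-negative square eventually appears, reducing to the previous case, or the cycle shortens to a short base case handled directly. The irrational ruled possibility is excluded by observing that in $\Sigma_g\times S^2$ with $g\ge 1$ (and its blow-ups) adjunction forces any symplectic sphere to have class in the fiber-plus-exceptional span, whose intersection form does not admit a length-$\ge 2$ cycle realizing $b^+\ge 1$.

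\textbf{Step 2: $[D]$ is Poincar\'e dual to $-K_X$.} With $X$ rational and $X\setminus D$ minimal, every symplectic exceptional class in $X$ must pair nontrivially with $D$. Combining the adjunction identity $-K_X\cdot [C_i] = [C_i]^2 + 2$ on each cycle component with the constraint $E\cdot [D] \ge 1$ for every exceptional class $E$, and using that $b^+(D)\ge 1$ means the image of $H_2(N_D)\to H_2(X;\Q)$ already fills out the positive part of the intersection form on the rational surface $X$, I would show that the difference $[D] + K_X$ pairs trivially with a spanning set of classes and therefore vanishes. Preservation of the minimal-complement hypothesis under the Step~1 reductions is essential here.

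\textbf{Step 3: From homological to symplectic identification.} Once $D$ is a symplectic cycle of spheres with $[D]=-K_X$ in a rational $(X,\omega)$, the triple $(X,D,\omega)$ satisfies the definition of a symplectic Looijenga pair, so the chosen embedding is anti-canonical; since the embedding was an arbitrary one with minimal complement, this is exactly rigidity, and Remark \ref{rmk:not rigid} confirms consistency with the rational requirement.

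\textbf{Main obstacle.} The heart of the argument is Step 1, and within it the inductive contraction of $(-1)$-components: one must check that at every stage $D'$ remains a circular spherical divisor with $b^+\ge 1$ and with minimal complement in $X'$, that the procedure terminates, and that the irrational ruled case is truly impossible rather than merely unlikely. A subtler but lower-risk difficulty is Step~2, where identifying $[D]+K_X=0$ requires that the bookkeeping of exceptional pairings be tight enough to rule out any ``orthogonal'' discrepancy; this is where minimality of $X\setminus D$, carried through Step~1, is used decisively.
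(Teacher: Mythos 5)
Your reduction to the single implication ``symplectically embeddable $\Rightarrow$ rigid'' is the right logical skeleton, and your Step 1 is essentially the paper's Lemma \ref{lemma:b^+ leq 1} and Lemma \ref{lemma:rational-embed} (toric blow-down to a toric minimal divisor or to $(-1,p)$, then McDuff's theorem plus a genus/fiber-class argument to exclude irrational ruled surfaces). One caveat already here: the length-two base cases $(-1,p)$ with $p\in\{-1,-2,-3\}$ cannot be ``handled directly'' by contraction --- the paper must blow down the $(-1)$-component to an immersed sphere, smooth it to a positive-square torus, and invoke results of Li--Mak--Yau and Biran to rule out irrational ruled ambient manifolds.

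The genuine gap is Step 2. You propose to deduce $[D]+K_X=0$ by showing it ``pairs trivially with a spanning set,'' but the only inputs you have are \emph{inequalities}: $E\cdot[D]\ge 1$ for exceptional $E$ (from minimality of $X\setminus D$ via Lemma \ref{lem: existence of nice J-sphere}) and the adjunction identities on the components. Inequalities against a spanning set do not force the vanishing of $[D]+K_X$; there is no mechanism in your sketch that converts $E\cdot[D]\ge 1$ into $E\cdot[D]=E\cdot(-K_X)$. That this cannot be repaired by soft homological bookkeeping is shown by the paper's own Example \ref{example:negative semidef}: the divisors $D_n$, $n\ge 10$, are symplectically embeddable with minimal complement in non-rational manifolds yet are not anti-canonical, so the hypothesis $b^+(D)\ge 1$ must enter in an essential, non-formal way. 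The paper's actual route is to first classify \emph{all} rationally embeddable divisors with $b^+=1$ up to toric equivalence (Proposition \ref{prop:list}, using the homological restrictions of Lemma \ref{lemma:topological cyclic} and the bound $[D]^2\le 9$ from Ohta--Ono), and then prove anti-canonicity and rigidity family by family: smoothing $D$ to a torus of positive square and invoking Ohta--Ono's classification of fillings of simple elliptic singularities when all $s_i\ge -1$ (Lemma \ref{lemma:s_i>=-1}); Golla--Lisca's blow-down-to-three-lines argument for the negative hyperbolic family (Lemma \ref{lemma:hyperbolic sequence embeddable}); and delicate light-cone-lemma computations locating orthogonal exceptional classes for $(-1,-2)$ and $(-1,-3)$ (Lemma \ref{lemma:(-1,-3) rigid}). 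Each of these is a substantive argument that your Step 2 elides, so the proposal as written does not constitute a proof.
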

In particular when $b^+ (D)\ge 2$, $D$ is not symplectically embeddable (Lemma \ref{lemma:b^+ leq 1}). Combined with the classification of symplectically embeddable circular spherical divisors in Theorem \ref{thm:list}, Theorem \ref{thm:embeddable=rigid} gives a combinatorial characterization of anti-canonical circular spherical divisor and answers Question \ref{question} in the case $b^+(D)\ge 1$. In this way, Theorem \ref{thm:embeddable=rigid} can be seen as a symplectic version of Looijenga conjecture for divisors with $ b^+(D)\ge 1 $.

We show that negative semi-definite circular spherical divisors (i.e. $b^+(D)=0$) are all symplectically fillable and not rigid.
This contrasts Theorem \ref{thm:embeddable=rigid}, where divisors with $b^+\ge 1$ may not be symplectically embeddable, but once symplectically embeddable, are all rigid. 
When $D$ is strictly negative semi-definite (i.e. not negative definite), we completely determine whether $D$ is anti-canonical or not, up to toric equivalence. 
These results give us examples that are symplectically embeddable but not anti-canonical, or anti-canonical but not rigid (Example \ref{example:negative semidef} and \ref{example:negative def}). This shows that the assumption $ b^+\ge 1 $ in Theorem \ref{thm:embeddable=rigid} is sharp.

We can classify all symplectically embeddable circular spherical divisors up to toric equivalence (see Definition \ref{def:toric eq}) and list them according to the types of their boundary torus bundles. 
It is well-known that the boundary $ Y_D $ of a plumbing of a cycle of spheres $ D $ is a topological torus bundle (cf. \cite{Ne81-calculus}). Also by $-Y_D$ we mean the negative of its induced boundary orientation.
\begin{theorem}\label{thm:list}
	A circular spherical divisor with $ b^+\ge 1 $ is symplectically embeddable if and only if it is toric equivalent to one of the following:
	\begin{enumerate}[\indent $ (1) $]
		\item $ (1,p) $ or $ (-1,-p) $ with $ p=1,2,3 $, in which case $ -Y_D $ is elliptic,
		\item $ (1,1,p) $ with $ p\le 1 $, in which case $ -Y_D $ is positive parabolic,
		\item $ (0,p) $ with $ p\le 4 $, in which case $ -Y_D $ is negative parabolic,
		\item $ (1,p) $ with $ p\le -1 $ or $ (1,1-p_1,-p_2,\dots,-p_{l-1},1-p_l) $ blown-up (defined in Section \ref{subsection:rationally embeddable}) with $ p_i\ge 2 $, $ l\ge 2 $, in which case $ -Y_D $ is negative hyperbolic.
	\end{enumerate}
	In particular, all $-Y_D$ above are distinct as oriented 3-manifolds.
\end{theorem}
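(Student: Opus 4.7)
The plan is to prove the theorem in three stages, invoking Theorem~\ref{thm:embeddable=rigid} throughout to identify ``symplectically embeddable'' with ``anti-canonical in a rational surface'' in the $b^+(D)\ge 1$ regime.

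\emph{Stage 1: constructing the embeddings.} For each of the four listed families I exhibit an explicit algebraic model. The skeleton sequences $(1,1-p_1,-p_2,\dots,-p_{l-1},1-p_l)$ in case (4) arise as toric boundary cycles of smooth projective toric surfaces obtained by successive edge blow-ups of the toric $\mathbb{CP}^2$ (whose boundary is $(1,1,1)$); the ``blown-up'' variants in (4) correspond to additional corner blow-ups. The short sequences in (1)--(3) are realized by classical configurations: $(0,p)$ with $p\le 4$ as a fiber plus section in a Hirzebruch surface, $(1,p)$ with small $p$ as a line together with a low-degree rational curve in $\mathbb{CP}^2$ (possibly after base-point blow-ups), and $(-1,-p)$, $(1,1,p)$ as toric blow-ups of these. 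In every case one directly checks $[D]=-K$, so the embedding is anti-canonical.

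\emph{Stage 2: exhaustiveness.} Given an anti-canonical cycle $D$ in a rational surface with $b^+(Q_D)\ge 1$, I contract $(-1)$-sphere components of $D$ iteratively to produce a minimal representative of its toric equivalence class. The reduced divisor satisfies the adjunction constraint $\sum_{j\ne i}C_i\cdot C_j=2$ for every sphere component (from $g_i=0$ and $[D]=-K$), as well as $b^+(Q_D)\in\{0,1\}$ by Lemma~\ref{lemma:b^+ leq 1}. A case analysis on the ambient minimal rational surface ($\mathbb{CP}^2$ or $\mathbb{F}_n$) and on the cycle length $r$ then forces the reduced divisor to be precisely one of the short sequences in (1)--(3) or a skeleton sequence of case (4).

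\emph{Stage 3: boundary torus bundles and distinctness.} By Neumann's plumbing calculus, $Y_D$ is the torus bundle over $S^1$ whose monodromy $M(D)\in SL_2(\mathbb{Z})$ is an explicit product of $2\times 2$ matrices determined by the $s_i$. Substituting the self-intersection sequences from (1)--(4) and computing $\tr M(D)$ identifies $-Y_D$ as elliptic, positive parabolic, negative parabolic, and negative hyperbolic respectively. Distinctness of the listed $-Y_D$ reduces to distinctness of the $SL_2(\mathbb{Z})$-conjugacy classes of their monodromies, which follows from Neumann's uniqueness theorem for cyclic plumbing graphs: distinct self-intersection sequences (up to cyclic and anti-cyclic permutation) give inequivalent cyclic plumbings and hence inequivalent oriented boundaries.

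The main obstacle is Stage 2. While the toric blow-down reduction itself is routine, ruling out sporadic non-toric minimal anti-canonical cycles with $b^+\ge 1$ requires combining adjunction, the $b^+\le 1$ constraint, and a component-by-component analysis of effective classes in small rational surfaces — typically with input from pseudo-holomorphic curve techniques to eliminate the remaining cases.
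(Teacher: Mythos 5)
There is a genuine gap, and it is a circularity. You invoke Theorem~\ref{thm:embeddable=rigid} ``throughout'' to replace ``symplectically embeddable'' by ``anti-canonical,'' but in this paper the implication (rationally embeddable $\Rightarrow$ anti-canonical) is itself proved \emph{from} the classification you are trying to establish: Proposition~\ref{prop:anti-canonical} begins by reducing to the list of Proposition~\ref{prop:list} and then checks anti-canonicity family by family. The paper's logical order is the reverse of yours: it first shows a symplectically embeddable $D$ with $b^+(Q_D)=1$ must sit in a rational surface (Lemma~\ref{lemma:rational-embed}), then classifies all rationally embeddable divisors directly, with no anti-canonical hypothesis. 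Your Stage~2, which assumes $[D]=-K$ from the outset, therefore only classifies anti-canonical cycles and leaves open exactly the hard direction: why a merely symplectically embeddable cycle must already be one of these. Moreover, the ``adjunction constraint $\sum_{j\ne i}C_i\cdot C_j=2$'' you extract from $[D]=-K$ is vacuous — it holds for every cycle of spheres by definition — so even granting anti-canonicity your Stage~2 has no actual constraints to work with. The constraints the paper really uses are global: the light cone lemma in a $b^+(X)=1$ manifold forcing disjoint non-negative components to be homologous with square zero (Lemmas~\ref{lem: non-negative components} and \ref{lemma:topological cyclic}), the Ohta--Ono bound $[D]^2\le 9$ (Lemma~\ref{lemma:constraint on D^2}), and an explicit lattice/adjunction computation excluding $(5+p,-p)$ for $p\ne -1$ (Lemma~\ref{lemma:k+p=5}). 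Your sketch also does not address why, in family (4), only the \emph{blown-up} sequences occur: not every $(1,1-p_1,-p_2,\dots,1-p_l)$ with $p_i\ge 2$ is embeddable, and showing that an embedded one must be blown-up is the content of Lemma~\ref{lemma:hyperbolic sequence embeddable} (Golla--Lisca's blow-down-to-$\CC\PP^2$ argument), which your exhaustiveness step silently needs.

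Your Stages 1 and 3 are essentially in line with the paper (explicit realizations by blow-ups of the minimal models of Theorem~\ref{thm:minimal model}, and monodromy traces plus Neumann's correspondence for distinctness), but one caution on Stage 3: the blanket claim that distinct self-intersection sequences up to (anti-)cyclic permutation give inequivalent oriented boundaries is false — toric equivalent divisors have different sequences and the same $Y_D$ (e.g.\ $(-1,-1,-1,-2,\dots,-2)\sim(0,-n)$). The paper avoids this by first passing to the normal forms $(-p_1,\dots,-p_l)$ with $p_i\ge 2$ and some $p_j\ge 3$, for which Neumann's Proposition~6.3 gives a bijection with conjugacy classes of trace $\ge 3$ matrices; distinctness across families then follows from the trace trichotomy. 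To repair the proof you would need to either prove ``symplectically embeddable $\Rightarrow$ anti-canonical'' independently of the classification, or abandon that shortcut and run the case analysis on rationally embeddable divisors directly as the paper does.
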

\begin{remark}
	The reason why we look at the negative boundary $ -Y_D $ is that induced contact structure from the divisor is a positive contact structure with respect to this negative orientation.
	For details see Section \ref{section:divisor nbhd}.
\end{remark}

\subsection{Contact structures and symplectic fillings}
A topological divisor $ D $ is called {\bf concave} (resp. {\bf convex}) if it admits a concave (resp. convex) plumbing (see Section \ref{section:divisor nbhd}).
In Proposition \ref{prop: convex-concave}, we show that a circular spherical divisor is concave (resp. convex) if and only if $ b^+(D)>0 $ (resp. $ Q_D $ negative definite).

If $ D $ has a concave (resp. convex) plumbing $ N_D $, there is a contact structure associated to $ D $ on the torus bundle obtained as the negative (resp. positive) boundary of $ N_D $, which we will denote by $ (-Y_D,\xi_D) $ (resp. $ (Y_D,\xi_D) $). This contact structure only depends on the topological divisor $ D $ and doesn't vary with the symplectic structure $ \omega $ on $ N_D $ (Proposition \ref{prop:unique-contact}). 
Furthermore, it is shown in \cite{LiMi-ICCM} (see Proposition \ref{prop:contact toric eq}) that this contact structure is invariant under toric equivalence. So toric equivalence is a natural equivalence for circular spherical divisors from the perspective of contact topology.

Symplectic fillability and Stein fillability of contact torus bundles have been extensively studied. Ding-Geiges showed in \cite{DiGe01-fillability} every torus bundle admits infinitely many weakly but not strongly symplectically fillable contact structures. Bhupal-Ozbagci constructed in \cite{BhOz14} Stein fillings for all tight contact structures on positive hyperbolic torus bundles. In \cite{DiLi18-torusbundle} Ding-Li studied symplectic fillability and Stein fillability of some tight contact structures on negative parabolic and negative hyperbolic torus bundles.
For a large family of concave circular spherical divisors (see Remark \ref{rmk:golla-lisca}), Golla and Lisca investigated the topology of Stein and minimal symplectic fillings of $ (-Y_D,\xi_D) $ in \cite{GoLi14}. Note that $ (Y_D,\xi_D) $ is always symplectic fillable if $ D $ is convex, as $ N_D $ provides a symplectic filling. In the case of elliptic log Calabi-Yau pairs, Ohta and Ono classified symplectic fillings of simple elliptic singularities up to symplectic deformation in \cite{OhOn03-simple-elliptic}. 

It turns out that, for a circular spherical divisor $ D $, $ b^+\ge 1 $ is equivalent to being concave and symplectically embeddable is equivalent to $ (-Y_D,\xi_D) $ being symplectically fillable.
Then we can use Theorem \ref{thm:embeddable=rigid} and \ref{thm:list} to determine the symplectic fillability of contact torus bundles $ (-Y_D,\xi_D) $ for all concave circular spherical divisors $ D $ and study the topology of their fillings. 
In particular, the rational homology type of minimal symplectic fillings is unique, which is completely determined by $D$. Here by \textbf{rational homology type}, we mean the Betti numbers and the intersection form on (co)homology over rational coefficients.
To describe the Betti numbers, we recall that the charge \[q(D)=12-D^2-r(D)=12-3r(D)-\sum s_i\] introduced in \cite{Fr}, which is invariant under toric equivalence. Also note that the number $b^0(D)$ of zero eigenvalues of $Q_D$ is also invariant under toric equivalence (Lemma \ref{lemma:toric eq topological}).

\begin{theorem} \label{thm:embeddable=fillable}
	Let $ D $ be a concave circular spherical divisor. Then $ (-Y_D,\xi_D) $ is symplectic fillable  if and only if $D$ is toric equivalent to one in Theorem \ref{thm:list}. Furthermore, we have that \begin{itemize}
		\item all minimal symplectic fillings have a unique rational homology type with $b_1=b_1(Y_D)-1$, $b_3=b^+=0 $, $b^0=1$ and $b^-=q(D)-2+b^0(D)$;
		\item all minimal symplectic fillings have $c_1=0$;
		\item every minimal symplectic filling is symplectic deformation equivalent to a Stein filling;
		\item there are at most finitely many (Stein) minimal symplectic fillings of $(-Y_D, \xi_D)$ up to symplectic deformation;
	\end{itemize}
	
\end{theorem}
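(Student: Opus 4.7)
The guiding idea is to convert every question about symplectic fillings of $(-Y_D,\xi_D)$ into a question about symplectic embeddings of $D$ into a closed symplectic $4$-manifold, by capping along the common boundary. To this end, I would first observe that given any symplectic filling $W$ of $(-Y_D,\xi_D)$, gluing $W$ to the concave plumbing $N_D$ along $Y_D$ produces a closed symplectic manifold $(X,\omega)$ in which $D$ sits as a symplectic divisor; conversely, any symplectic embedding of $D$ yields a symplectic filling $X\setminus \mathrm{int}(N_D)$. An isotopy argument based on the collar of $Y_D$ shows that symplectic $(-1)$-spheres in $W$ correspond to symplectic $(-1)$-spheres in $X\setminus D$, so minimality of $W$ is equivalent to minimality of $X\setminus D$. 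Combining this correspondence with Theorem \ref{thm:embeddable=rigid} (rigidity) and Theorem \ref{thm:list} (classification) immediately yields that $(-Y_D,\xi_D)$ is symplectically fillable precisely when $D$ is toric equivalent to one in Theorem \ref{thm:list}, and shows that every minimal filling $W$ arises as the complement of $D$ in a symplectic Looijenga pair; in particular the corresponding $X$ is rational.

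For the homological bullets, I would exploit the rationality of $X$. Since $(X,D,\omega)$ is a symplectic Looijenga pair, $c_1^2(X)=[D]^2=\sum s_i+2r$ is determined by $D$, and rationality then pins down $\chi(X)=12-c_1^2(X)$ and $\sigma(X)=c_1^2(X)-8$. Additivity of Euler characteristic (using $\chi(N_D)=r$ and $\chi(Y_D)=0$) together with Novikov additivity of signature yield $\chi(W)=q(D)$ and $\sigma(W)=-(q(D)-2+b^0(D))$, after substituting $\sigma(Q_D)=2-r+b^0(D)$ in the concave case $b^+(D)=1$. The key geometric input is $b^+(W)=0$: if $\alpha\in H_2(W;\Q)$ had $\alpha^2>0$, pairing with a positive class $\beta\in H_2(N_D;\Q)$ (which exists since $b^+(D)=1$) would yield, under the inclusions into $X$, two linearly independent, mutually orthogonal classes of positive self-intersection, contradicting $b^+(X)=1$. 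Mayer--Vietoris then gives $b_1(W)=b_1(Y_D)-1$, combined with the identity $b_1(Y_D)=b^0(D)+1$ (read off from the long exact sequence of $(N_D,Y_D)$), this forces $b^0(W)=1$ and hence $b^-(W)=q(D)-2+b^0(D)$.

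The vanishing $c_1(W)=0$ is immediate from $PD[D]=c_1(X)$, which trivializes the canonical bundle on $X\setminus D$ and hence on its retract $W$. For the Stein deformation equivalence, my plan is to exhibit an explicit Stein filling for each family in Theorem \ref{thm:list} and then, given any minimal symplectic filling $W$, to use the rigidity of the ambient pair $(X,D,\omega)$ together with deformation uniqueness results for symplectic forms on rational surfaces (McDuff-style reduction to a minimal model, combined with isotopy of $D$ in its cohomology class) to connect $W$ to the Stein model through symplectic deformations in the complement of $D$. Finiteness up to symplectic deformation should then follow from the finiteness of symplectic Looijenga pair structures on a fixed rational $X$ with combinatorially fixed $D$, which is essentially a statement about moduli of tame almost-complex structures adapted to $D$.

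\textbf{Main obstacle.} Once Theorem \ref{thm:embeddable=rigid} is in hand, the fillability criterion and the homological bullets reduce to rather mechanical additivity computations plus the orthogonal positive classes trick. The genuine difficulty lies in the last two bullets: upgrading a minimal symplectic filling to a Stein one up to symplectic deformation, and bounding the number of deformation classes of (Stein) minimal fillings. Both require a careful analysis of the moduli of symplectic Looijenga structures on a rational surface and a robust mechanism for transporting deformations of the closed pair to deformations of the open filling, and this is where I expect the most delicate symplectic input to be needed.
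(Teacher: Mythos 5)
Your treatment of the fillability criterion and the first two bullets follows essentially the same route as the paper: cap a filling $W$ off with the concave plumbing $N_D$, invoke Theorem \ref{thm:embeddable=rigid} and Theorem \ref{thm:list} to conclude that the resulting closed manifold is a symplectic Looijenga pair, and then run additivity of $\chi$ and $\sigma$ together with the light-cone argument for $b^+(W)=0$ and a Mayer--Vietoris computation for $b_1$ and $b^0$. One technical point you gloss over: to pass from ``$W$ is a minimal symplectic filling'' to ``$X\setminus D$ is minimal'' (which is what rigidity requires), the paper does not use a collar isotopy but cites the fact that a minimal symplectic filling contains no \emph{smoothly} embedded $(-1)$-sphere in its interior (Proposition 3.1 of \cite{LiMa16-kodaira}); your isotopy sketch only addresses symplectic $(-1)$-spheres, and the distinction matters for the definition of minimality of the complement. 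Also, Mayer--Vietoris alone gives $b_1(W)\le b_1(Y_D)-1$ and the relation $b^0(W)+b_1(W)=b_1(Y_D)$; pinning down $b^0(W)=1$ in the degenerate case $D\sim(1,1,p)$ requires combining this with the Euler characteristic and signature equations, as the paper does.

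The genuine gap is in the last two bullets, and you have correctly located it but your proposed mechanism would not close it. You plan to exhibit one Stein model per family and connect an arbitrary minimal filling to it by a deformation; this presupposes a uniqueness of fillings up to deformation that is false in general (the theorem only claims \emph{finiteness}, and Golla--Lisca's example shows fillings of a fixed $(-Y_D,\xi_D)$ with non-isometric integral lattices). What the paper actually proves is that \emph{each} symplectic deformation class of Looijenga pairs contains a K\"ahler representative (Lemma \ref{thm: symplectic deformation class=homology classes}), and that this representative can be chosen generic in the sense of Gross--Hacking--Keel so that, after solving $Q_Dz=a$ with $z\in(\RR_+)^r$, the divisor $\sum z_i[\overline{C_i}]$ is ample by Nakai--Moishezon; the complement is then affine, hence Stein, and lies in the same deformation class as the given filling. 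Likewise, finiteness does not come from moduli of tamed almost complex structures but from Friedman's algebro-geometric finiteness of deformation types of anti-canonical pairs with fixed self-intersection sequence (Theorem \ref{thm: finite kahler deformation}), transported to the symplectic category via the K\"ahler representatives. Without these two inputs your argument for the third and fourth bullets does not go through.
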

Note that $b^\pm ,b^0$ completely determines the intersection form on (co)homology with rational coefficients. The following example of Golla-Lisca implies that integral intersection forms are not unique for fillings of every concave $D$, so our uniqueness of rational homology type is sharp.
\begin{example}[\cite{GoLi14}]
	The divisor \[D=(1,-2,-3,-3,-2,-3,-2) \] admits two different symplectic embeddings in $X=\CC\PP^2\# 9\overline{\CC\PP}^2$. Then we get two regular neighborhoods $W_1, W_2$ of the two embedded divisors and two complements $P_1,P_2$ of the neighborhoods. $P_1$ and $P_2$ are actually Stein fillings of $(-Y_D,\xi_D)$. Golla and Lisca showed the image of $H_2(P_i;\ZZ)$ in $H_2(X;\ZZ)$ are not isometric as integral lattices, which implies $P_1$ and $P_2$ have different intersection forms and thus not homotopic equivalent.
\end{example}

\begin{remark}\label{rmk:golla-lisca}
		In \cite{GoLi14}, Golla and Lisca have investigated contact torus bundles $(-Y_D,\xi_D)$ arising from $D$ in families (1), (3), (4) of Theorem \ref{thm:list} and proved that they are Stein fillable. 
		They showed that all Stein fillings of $(-Y_D, \xi_D)$ have $c_1=0$, $b_1=0$ and share the same Betti numbers.
		Moreover, up to diffeomorphism, for family (1) there is a unique Stein filling and for family (3) and (4) there are finitely many Stein fillings.
		Many of their results also hold for minimal symplectic fillings.
		Our result deals with deformation types of minimal symplectic fillings and identifies Stein fillings with minimal symplectic fillings up to deformation. 
		We improve their result by proving finiteness up to symplectic deformation, computing $ b^\pm,b^0$ and thus the rational intersection form for all minimal symplectic fillings and also generalizing the results to the new family (2) of positive parabolic contact torus bundles.
\end{remark}
\begin{remark}\label{rmk:VHM}
		In light of the above remark, Theorem \ref{thm:embeddable=fillable} is more interesting as a non-fillability criterion and in particular gives many non-fillable contact torus bundles that are not previously known. 
		In literature, contact torus bundles are mostly presented as quotients of $ T^2\times \RR $ or as Legendrian surgeries, which makes it difficult to compare these to contact torus bundles as boundaries of circular spherical divisors in our paper. We exhibit one example here. For a circular spherical divisor $D=(s_1,\dots,s_r)$ with $ s_i\ge -2 $ for all $ i $, we can construct an openbook decomposition for such contact torus bundles by \cite{LiMi-ICCM}, which corresponds to a word decomposition of the monodromy by \cite{VHM-thesis}. Then the circular spherical divisor \[
		(-1,-1,-1,\underbrace{-2,\dots,-2}_n)\sim (0,-n)
		\]
		has a word decomposition $ (aba)^{-2}a^n $. For $ n \le -5 $, this contact structure is not Stein fillable by Theorem \ref{thm:embeddable=fillable}, which is previously unknown (see \cite{VHM-thesis} Chapter 5, Example 3.(d)).
\end{remark}

When a symplectic circular spherical divisor $ D $ in $ (X,\omega ) $ is anticanonical and convex, it arises as a resolution of a cusp singularity and thus its boundary $ (Y_D,\xi_D ) $ is Stein fillable. We study the geography of its Stein fillings and give restrictions on its Betti numbers and Euler number in Proposition \ref{prop:geography}.

\textbf{Acknowledgments}: 
This paper is inspired by the results and questions in Golla-Lisca \cite{GoLi14}. 
The authors are grateful for the anonymous referees for their valuable time and constructive input.
All authors are supported by NSF grant 1611680. 


\section{Topology of circular spherical divisors}

In this section we discuss several aspects of circular spherical divisors.  We first reviews basic facts about torus bundles in Section \ref{subsection:torus bundle}.
Then in Section \ref{section:toric eq} we introduce the notion of toric equivalence for topological divisors. 
All properties we are concerned about in this paper will be invariant under toric equivalence, which makes it a useful reduction tool in the proofs.
Finally in Section \ref{subsection:homological} we give homological restrictions for a circular spherical divisor to be embedded in a closed manifold with $ b^+=1 $. In particular, Lemma \ref{lemma:topological cyclic} is essential to the proof of Theorem \ref{thm:list}.

\subsection{Boundary torus bundles}\label{subsection:torus bundle}
Denote by $ T_A $ an oriented torus bundle over $ S^1 $ with monodromy $ A\in SL(2;\ZZ) $. A torus bundle $ T_A $ is called elliptic if $ |\tr A|<2 $, parabolic if $ |\tr A|=2 $ and hyperbolic if $ |\tr A|>2 $. If $ T_A $ is parabolic or hyperbolic, we call it positive (resp. negative) if $ \tr A $ is positive (resp. negative). Also, $-T_A$ is orientation-preserving diffeomorphic to $T_{A^{-1}}$.

To describe the plumbed 3-manifold $Y_D$, we introduce the following matrix for a sequence of integers
$(t_1, \cdots, t_r)$,
\[ A(t_1,\dots,t_r)= \begin{pmatrix}
t_r & 1 \\
-1 & 0 \end{pmatrix}
\begin{pmatrix}
t_{r-1} & 1 \\
-1 & 0 \end{pmatrix} \dots 
\begin{pmatrix}
t_1 & 1 \\
-1 & 0 \end{pmatrix}  \in SL_2(\ZZ).\]

\begin{lemma}  [Theorem 6.1 in \cite{Ne81-calculus}, Theorem 2.5 in \cite{GoLi14}]   \label{lem: property of continuous fraction}
	For a circular spherical divisor $D=(s_1, ..., s_r)$, the  plumbed 3-manifold $Y_D$ is the oriented torus bundle $T_A$ 
	over $S^1$ with  monodromy  $A=A(-s_1,\dots,-s_r)$.
	Its homology is given by $H_1(Y_D;\ZZ)=\ZZ\oplus Coker(A-I)=\ZZ\oplus Coker (Q_D)$.
	The intersection matrix $Q_D$ is non-degenerate if the trace of $A(-s_1,\dots,-s_r)\ne 2$.
\end{lemma}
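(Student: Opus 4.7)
The plan is to establish the three assertions in sequence: the torus bundle structure with the prescribed monodromy, the computation of $H_1$, and the non-degeneracy criterion. The first assertion is the core geometric content, while the other two should follow by algebraic manipulations once the monodromy is pinned down.

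First I would prove that $Y_D$ is a torus bundle with the claimed monodromy by building $N_D$ incrementally and tracking the boundary. Removing small $4$-ball neighborhoods of the $r$ intersection points from the disk bundle over $C_i$ leaves a $D^2$-bundle $P_i$ over a pair of pants (a $2$-sphere with two disks removed, which is an annulus). Its boundary consists of two torus components and one circle bundle over the annular base; crucially, this circle bundle is the trivial $T^2\times[0,1]$ because the Euler number $s_i$ is already ``used up'' by the framings of the removed disks near the intersection points. Gluing the $P_i$'s cyclically along tori, with plumbing identifications swapping base and fiber circles at each intersection, exhibits $Y_D$ as a mapping torus of $T^2$. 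The contribution of the $i$-th vertex to the monodromy (computed in a preferred basis where the first coordinate tracks the base circle of the disk bundle and the second tracks the fiber) is exactly the matrix $\bigl(\begin{smallmatrix}-s_i & 1\\ -1 & 0\end{smallmatrix}\bigr)$: the anti-diagonal entries encode the base-fiber swap of plumbing, and $-s_i$ records the framing twist. Composing these $r$ matrices in the correct order gives $A=A(-s_1,\dots,-s_r)$. This is the part I expect to be the main obstacle, since it requires a careful bookkeeping of orientations and framings to recover the exact form of the matrix (rather than one of its many conjugates); I would cross-check the sign and ordering conventions against a small example such as $r=2$.

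Next, for the homology, I would apply Mayer--Vietoris (or equivalently the Wang sequence) to the mapping torus decomposition $Y_D=T_A$ to obtain
\[
0\to \mathrm{Coker}(A-I)\to H_1(Y_D)\to H_0(T^2)\to 0,
\]
which splits as $H_1(Y_D)\cong \mathbb{Z}\oplus \mathrm{Coker}(A-I)$. To identify this cokernel with $\mathrm{Coker}(Q_D)$, I would use the long exact sequence of the pair $(N_D,Y_D)$ together with Lefschetz duality: $N_D$ deformation retracts onto the cycle of spheres $D$, so $H_2(N_D)\cong \mathbb{Z}^r$ generated by the classes $[C_i]$ and $H_1(N_D)\cong \mathbb{Z}$ coming from the loop around the cycle, while $H_2(N_D,Y_D)\cong H^2(N_D)\cong \mathbb{Z}^r$. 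Under these identifications the connecting map $H_2(N_D)\to H_2(N_D,Y_D)$ is exactly $Q_D$, so the exact sequence
\[
\mathbb{Z}^r\xrightarrow{Q_D}\mathbb{Z}^r\to H_1(Y_D)\to \mathbb{Z}\to 0
\]
gives $H_1(Y_D)\cong \mathbb{Z}\oplus \mathrm{Coker}(Q_D)$, and matching the two presentations yields $\mathrm{Coker}(A-I)\cong \mathrm{Coker}(Q_D)$.

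Finally, for non-degeneracy of $Q_D$, I would argue $\det Q_D=\pm(\operatorname{tr} A-2)$ and conclude. One way is to verify directly that $\det(A-I)=\operatorname{tr}(A)-2$ for any $A\in SL_2(\mathbb{Z})$ (an elementary $2\times 2$ computation using $\det A=1$), and then to use the isomorphism $\mathrm{Coker}(Q_D)\cong \mathrm{Coker}(A-I)$ established above together with the fact that a square integer matrix has finite cokernel iff it is non-degenerate, with the order of the cokernel equal to $|\det|$. Hence $Q_D$ is non-degenerate exactly when $A-I$ is, i.e.\ when $\operatorname{tr}(A)\neq 2$. The bulk of the difficulty is concentrated in the monodromy step; once that is in place, the remaining items are bookkeeping.
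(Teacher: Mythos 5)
The paper does not actually prove this lemma: it is quoted verbatim from Neumann (Theorem 6.1 of the plumbing calculus paper) and Golla--Lisca (Theorem 2.5), so there is no in-paper argument to compare against. Your reconstruction is a correct version of the standard proof from those sources. The cyclic plumbing decomposes $Y_D$ into $r$ copies of $T^2\times[0,1]$ glued with base--fiber swaps and framing twists, exhibiting the mapping torus with monodromy $A(-s_1,\dots,-s_r)$; the Wang sequence of the mapping torus and the long exact sequence of the pair $(N_D,Y_D)$ (with the connecting map identified with $Q_D$ via Lefschetz duality, exactly as the paper sets up in its introduction) give the two presentations of $H_1(Y_D)$, and cancellation of the free $\ZZ$ summand in finitely generated abelian groups identifies $\mathrm{Coker}(A-I)$ with $\mathrm{Coker}(Q_D)$; finally $\det(A-I)=2-\operatorname{tr}A$ for $A\in SL_2(\ZZ)$, together with the fact that an integer square matrix has finite cokernel of order $|\det|$ precisely when it is non-degenerate, gives the trace criterion. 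Two cosmetic slips, neither of which affects the argument: a sphere with two disks removed is an annulus, not a pair of pants (you correct yourself in the same sentence), and the sign should be $\det(A-I)=2-\operatorname{tr}A$ rather than $\operatorname{tr}A-2$, which is harmless since you only use non-vanishing and the absolute value of the determinant. You are also right that the only genuinely delicate point is the orientation and ordering bookkeeping in the monodromy computation, which is presumably why the authors defer entirely to Neumann's conventions rather than reproving it.
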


\subsection{Toric equivalences}\label{section:toric eq}
\begin{definition}\label{def:toric eq}
	For a topological divisor $ D=\cup C_i $, \textbf{toric blow-up} is the operation of adding a sphere component with self-intersection $ -1 $ between an adjacent pair of component $ C_i $ and $ C_{j} $ and changing the self-intersection of $ C_i $ and $ C_{j} $ by $ -1 $. \textbf{Toric blow-down} is the reverse operation.
	
	$ D^0 $ and $ D^1 $ are \textbf{toric equivalent} if they are connected by toric blow-ups and toric blow-downs. $ D $ is said to be \textbf{toric minimal} if no component is an exceptional sphere (i.e. a component of self-intersection $ -1 $).
\end{definition}
Note that toric blow-ups and blow-downs can be performed in the symplectic category by adding an extra parameter of symplectic area and are thus operations on symplectic divisors.
Also note that we could keep toric blowing down a circular spherical divisor until either it becomes toric minimal or it has length $ 2 $. 
When a circular spherical divisor has length $ 2 $, we cannot further blow it down because it would result in a non-embedded sphere and thus not a topological divisor. So we exclude this case when we talk about toric blow-down.


\begin{lemma}\label{lemma:toric eq topological}
	The  following are preserved under toric equivalence:
	\begin{enumerate}[\indent $ (1) $]
		\item $D$ being a circular spherical divisor,
		\item $ b^+(Q_D) $ and $ b^0(Q_D) $ (in particular the non-degeneracy of the intersection matrix $Q_D$),
		\item the oriented diffeomorphism type of the plumbed 3-manifold $Y_D$.  
	\end{enumerate}
	
\end{lemma}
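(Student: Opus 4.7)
The plan is to reduce each of (1), (2), (3) to a single toric blow-up, since toric equivalence is generated by toric blow-ups and their inverses. Fix such a blow-up $D \rightsquigarrow D'$ inserting a sphere $E$ of self-intersection $-1$ between adjacent components $C_i, C_{i+1}$, so that $D' = (s_1, \ldots, s_i - 1, -1, s_{i+1} - 1, \ldots, s_r)$. Claim (1) is then immediate from the sequence description: inserting a new vertex into the cycle keeps the underlying dual graph a cycle and the new component is a sphere, so $D'$ is still circular spherical; blow-down is the reverse move, and by assumption is not performed when it would produce a cycle of length less than two.

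For (2), I would exhibit a unimodular change of basis on $H_2(N_{D'})$ identifying $Q_{D'}$ with $Q_D \oplus (-1)$. In the natural basis $(\tilde{C}_1, \ldots, \tilde{C}_r, E)$ of components of $D'$ (with $\tilde{C}_k = C_k$ for $k \neq i, i+1$, $\tilde{C}_i^2 = s_i - 1$, $\tilde{C}_{i+1}^2 = s_{i+1} - 1$, $\tilde{C}_i \cdot E = \tilde{C}_{i+1} \cdot E = 1$, $\tilde{C}_i \cdot \tilde{C}_{i+1} = 0$, $E^2 = -1$), I set $C_i := \tilde{C}_i + E$ and $C_{i+1} := \tilde{C}_{i+1} + E$, keeping the remaining $\tilde{C}_k$ and $E$ fixed. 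A direct expansion recovers the original intersection numbers $C_j \cdot C_k = s_{jk}$ and yields $C_k \cdot E = 0$, $E^2 = -1$. Since the change of basis is unimodular, $Q_{D'}$ is integrally congruent to $Q_D \oplus (-1)$; hence $b^+$ and $b^0$ are preserved (and $b^-$ grows by one).

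For (3), I would invoke Lemma \ref{lem: property of continuous fraction} and verify that the monodromy $A(-s_1, \ldots, -s_r)$ is preserved under the blow-up. Setting $t_k = -s_k$ and noting that cyclic reindexing only conjugates $A$ in $SL_2(\mathbb{Z})$ (and hence does not change the torus bundle), I may assume the blow-up is performed between $C_1$ and $C_2$, so the new $t$-sequence is $(t_1 + 1, 1, t_2 + 1, t_3, \ldots, t_r)$. The claim then reduces to the local $2 \times 2$ identity
\[
\begin{pmatrix} t_2+1 & 1 \\ -1 & 0 \end{pmatrix}
\begin{pmatrix} 1 & 1 \\ -1 & 0 \end{pmatrix}
\begin{pmatrix} t_1+1 & 1 \\ -1 & 0 \end{pmatrix}
= \begin{pmatrix} t_2 & 1 \\ -1 & 0 \end{pmatrix}
\begin{pmatrix} t_1 & 1 \\ -1 & 0 \end{pmatrix},
\]
a routine matrix multiplication. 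I do not anticipate a substantive obstacle; the only real care needed is keeping cyclic/anti-cyclic conventions straight, which is handled by choosing the labeling to place the blow-up at a fixed position.
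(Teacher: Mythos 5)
Your argument is correct. Parts (1) and (2) coincide with the paper's proof: the paper uses exactly the same unimodular change of basis (written there as $\{E, C_1'+E, C_2', \dots, C_{r-1}', C_r'+E\}$ for a blow-up between $C_r$ and $C_1$) to identify $Q_{D'}$ with $(-1)\oplus Q_D$. The one place you differ is (3): the paper simply cites Proposition 2.1 of Neumann's plumbing calculus, which gives invariance of the plumbed 3-manifold under blow-up for arbitrary plumbing graphs, whereas you derive the invariance from the monodromy description in Lemma \ref{lem: property of continuous fraction} via the local identity $M(t_2+1)M(1)M(t_1+1)=M(t_2)M(t_1)$ for $M(t)=\left(\begin{smallmatrix} t & 1\\ -1 & 0\end{smallmatrix}\right)$, which I have checked is correct, together with the fact that cyclic reindexing only conjugates the monodromy in $SL_2(\ZZ)$ and hence preserves the oriented torus bundle. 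Your route is more self-contained and makes the mechanism visible, at the cost of applying only to circular spherical divisors (where the boundary is a torus bundle with this explicit monodromy); since that is the setting of the lemma and of the paper's own computation in (2), this is not a restriction in practice. One small imprecision: your claim $\tilde{C}_i\cdot\tilde{C}_{i+1}=0$ after the blow-up assumes the two components met at a single point, which fails for $r=2$ where they meet twice; the computation is unaffected ($\tilde{C}_i\cdot\tilde{C}_{i+1}$ drops by exactly one and the correction terms restore it), but it is worth stating the general case.
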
 
\begin{proof}
	(1) is obvious and (3) is part of Proposition 2.1 in \cite{Ne81-calculus}.
	
	(2) follows from a direct computation. Let $ D= (b_1,\dots,b_r) $ and $ D'=(-1,b_1-1,b_2,\dots , b_r-1) $ be its toric blow-up. Then in terms of the standard basis $ \{ E, C_1',\dots,C_r' \} $, the intersection matrix $ Q_{D'} $ is of the form \[
	\begin{pmatrix}
	-1 & 1 & 0  & \dots  & 1\\
	1 & b_1 -1  & 1 &\dots & 0\\
	0 & 1 & b_2  & \dots & \vdots \\
	\vdots & & & & 1\\
	1 & 0 & \dots & 1 & b_r-1
	\end{pmatrix}.
	\]
	Then $ Q_{D'} $ is equivalent to $ (-1)\oplus Q_D $ under the new basis $ \{ E,C_1'+E,C_2',\dots,C_{r-1}',C_r'+E \} $. So toric blow-up preserves $ b^+ $, $ b^0 $ and increases $ b^- $ by $ 1 $.
\end{proof}
Here is an example to illustrate how a self-intersection $ 0 $ component in a circular spherical divisor can be  used  to balance the self-intersection numbers of the two sides by performing a toric blow-up and a toric blow-down. 

\begin{example}\label{eg: balancing self-intersection by $0$-sphere}
	The following  three  cycles of spheres are toric equivalent: 
\[	\begin{tikzpicture}
	\node (x) at (0,0) [circle,fill,outer sep=5pt, scale=0.5] [label=above:$ 3 $]{};
	\node (y) at (1,0) [circle,fill,outer sep=5pt, scale=0.5] [label=above:$ -2 $]{};
	\node (z) at (1,-1) [circle,fill,outer sep=5pt, scale=0.5] [label=below:$ 0 $]{};
	\draw (x) -- (y);\draw (y) -- (z);\draw (z) -- (x);
	\end{tikzpicture}\quad
	\begin{tikzpicture}
	\node (x) at (0,0) [circle,fill,outer sep=5pt, scale=0.5] [label=above:$ 2 $]{};
	\node (y) at (1,0) [circle,fill,outer sep=5pt, scale=0.5] [label=above:$ -2 $]{};
	\node (z) at (1,-1) [circle,fill,outer sep=5pt, scale=0.5] [label=below:$ -1 $]{};
	\node (w) at (0,-1) [circle,fill,outer sep=5pt, scale=0.5] [label=below:$ -1 $]{};
	\draw (x) -- (y);\draw (y) -- (z);\draw (z) -- (w);\draw (x) to (w);
	\end{tikzpicture}\quad
	\begin{tikzpicture}
	\node (x) at (0,0) [circle,fill,outer sep=5pt, scale=0.5] [label=above:$ 2 $]{};
	\node (y) at (1,0) [circle,fill,outer sep=5pt, scale=0.5] [label=above:$ -1 $]{};
	\node (z) at (0,-1) [circle,fill,outer sep=5pt, scale=0.5] [label=below:$ 0 $]{};
	\draw (x) -- (y);\draw (y) -- (z);\draw (z) -- (x);
	\end{tikzpicture}
\]	
\end{example}
We call such a move which changes a divisor $ (\dots,k,0,p,\dots) $ to a toric equivalent divisor $ (\dots,k-n,0,p+n,\dots) $ a \textbf{balancing move} based at the $ 0 $-sphere.


\begin{lemma}\label{lem: not negative semi-definite => at least one non-negative}
	Any circular spherical divisor is toric equivalent to a toric minimal one or $(-1, p)$. If $D=(-1,p)$, then $Q_D$ is degenerate only if $p=-4$.

	Suppose $D=(s_1,\dots,s_r)$ is  a toric minimal cycle of spheres. Then
	
	\begin{enumerate}[\indent$  (1) $]
		\item $b^+(Q_D)\geq 1$    if and only if  $s_i\geq 0$ for some $i$.
		
		\item $Q_D$ is  negative definite if 
		$s_i\leq -2$   for all $i$ and less than $-2$ for some $i$.
		$Q_D$ is  negative semi-definite but not negative definite if 
		$s_i= -2$ for each $i$.
		\item $Q_D$ is non-degenerate 
		if either  $s_1\geq 0$ and $s_i\leq -2$ for $i\geq 2$, or    $s_1=s_2=0$ and $s_i\leq -2$ for $i\geq 3$.
	\end{enumerate} 
\end{lemma}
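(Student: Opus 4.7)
The plan is to split the lemma into three pieces: the reduction (plus the exceptional length-$2$ computation), the sign statements (1)--(2) via weak diagonal dominance and a maximum-modulus trick, and the non-degeneracy (3) via principal-submatrix signature counting together with Lemma~\ref{lem: property of continuous fraction}. Iterating toric blow-downs on any $(-1)$-component strictly decreases the component count, so the process terminates either at a toric minimal divisor or, since Definition~\ref{def:toric eq} forbids blow-down past length $2$, at a length-$2$ configuration $(-1,p)$. In the latter case the two spheres meet at two points, so $Q_D=\begin{pmatrix}-1 & 2\\ 2 & p\end{pmatrix}$ and $\det Q_D=-p-4$ vanishes precisely at $p=-4$.

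For (1) assume $D$ toric minimal. If every $s_i\le -2$, then $-Q_D$ has diagonal entries $|s_i|\ge 2$ against off-diagonal row-sum $2$, so it is weakly diagonally dominant and positive semi-definite; hence $b^+=0$. Conversely, if some $s_i\ge 0$ I would exhibit a $Q_D$-positive direction directly: $v=e_i$ when $s_i>0$, or $v=e_i+t e_j$ for an adjacent $j$ with small $t>0$ when $s_i=0$, giving $v^T Q_D v>0$. For (2), the $b^+=0$ half is (1). When every $s_i=-2$, each row of $Q_D$ sums to zero, so the all-ones vector lies in $\ker Q_D$, and $Q_D$ is degenerate. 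When some $s_i<-2$, a maximum-modulus argument yields non-degeneracy: if $Q_D v=0$ and $|v_j|$ is maximal (WLOG $v_j>0$), the $j$-th equation $-s_j v_j=v_{j-1}+v_{j+1}\le 2v_j$ combined with $-s_j\ge 2$ forces $s_j=-2$ and $v_{j-1}=v_{j+1}=v_j$; propagating around the connected cycle forces $v$ constant and every $s_i=-2$, contradicting the hypothesis, so $\ker Q_D=0$.

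For (3) I would use signature counts on principal submatrices, starting from the observation that a path-graph version of the same max-modulus argument (with the end rows now strictly diagonally dominant) shows any tridiagonal path matrix with diagonal entries all $\le-2$ is negative definite. In case (a), the submatrix on $\{C_2,\dots,C_r\}$ is therefore negative definite of rank $r-1$; combined with $b^+\ge 1$ from (1) and $b^++b^-+b^0=r$, this forces $b^+=1,\ b^-=r-1,\ b^0=0$. In case (b), the submatrix on $\{C_3,\dots,C_r\}$ (handling $r=2$ separately: $D=(0,0)$ has $\det Q_D=-4$) gives $b^-\ge r-2$, and with $b^+\ge 1$ we get $b^0\le 1$. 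To rule out $b^0=1$ I would invoke Lemma~\ref{lem: property of continuous fraction}: since $\begin{pmatrix}0 & 1\\-1 & 0\end{pmatrix}^2=-I$, the monodromy factors as $A=-A_{\mathrm{chain}}$ with $A_{\mathrm{chain}}=\begin{pmatrix}-s_r & 1\\-1 & 0\end{pmatrix}\cdots\begin{pmatrix}-s_3 & 1\\-1 & 0\end{pmatrix}$, each factor having trace $-s_i\ge 2$. The main obstacle is then the estimate $\tr(A_{\mathrm{chain}})\ge 2$, which I would prove by induction on chain length using the positivity of the continuants appearing in the entries of $A_{\mathrm{chain}}$; this yields $\tr(A)\le -2\ne 2$, so $Q_D$ is non-degenerate by Lemma~\ref{lem: property of continuous fraction}. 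An alternative route, avoiding the trace estimate, would be to reduce case (b) to case (a) by a balancing move at one of the $0$-components followed by a chain of blow-downs and then apply Lemma~\ref{lemma:toric eq topological}(2), at the cost of more careful bookkeeping to ensure the reduction lands in case (a) rather than in the length-$2$ exceptional form.
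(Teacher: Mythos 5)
Your proposal is correct, but it reaches part (3) by a genuinely different route than the paper, so a comparison is worthwhile. The paper dispatches (1) and (2) by citation (Lemma 8.1 of Neumann's calculus and Lemma 2.5 of Gay--Mark), whereas you give self-contained proofs via weak diagonal dominance of $-Q_D$ and a maximum-modulus propagation around the cycle; both of your arguments are sound, including the $r=2$ degenerate case where the off-diagonal entry is $2$. For (3) the paper runs both cases entirely through the monodromy-trace criterion of Lemma \ref{lem: property of continuous fraction}, computing $\tr A(-s_1,\dots,-s_r)\le -2$ from Neumann's continuant inequalities $p_j\ge p_{j-1}+1$ and $p_j\ge q_j+1$. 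You instead handle case (a), and reduce case (b) to a single residual possibility, by Cauchy interlacing against the negative definite tridiagonal path submatrix obtained by deleting the non-negative components; this is more elementary, avoids the monodromy entirely in case (a), and as a bonus pins down the full signature $(b^+,b^-,b^0)=(1,r-1,0)$ there. The one place you cannot escape the paper's machinery is excluding $b^0=1$ in case (b): your factorization $A=-A_{\mathrm{chain}}$ and the estimate $\tr(A_{\mathrm{chain}})=p_{r-2}-q_{r-3}\ge (q_{r-2}+1)-q_{r-3}\ge 2$ is exactly the paper's second computation, so the continuant induction still has to be carried out (or replaced by your alternative balancing-move reduction to case (a), which is legitimate by Lemma \ref{lemma:toric eq topological} but needs the bookkeeping you flag). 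One small point in your favor: your length-$2$ matrices consistently use the correct off-diagonal entry $2$, whereas the paper's display for $r=2$ in part (3) writes off-diagonal $1$; the conclusion is unaffected since $\det Q_D=s_1s_2-4<0$ either way.
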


\begin{proof}
	By toric blow-down, any circular spherical divisor is toric equivalent to a toric minimal one or one of length $ 2 $. If $ D $ has length $ 2 $ and not toric minimal, then it is of the form $ (-1,p) $. Then $ \det(Q_D)=-p-4=0 $ only if $ p=-4 $.
	
	(1) and (2) are well-known (cf. Lemma 8.1 in \cite{Ne81-calculus}, Lemma 2.5 in \cite{GaMa13-LF}).  
	
	To  prove (3),  first notice that for $ r=2 $, $ Q_D=\begin{pmatrix}
	s_1 & 1\\ 1 & s_2
	\end{pmatrix} $, so $ Q_D $ is non-degenerate if $ s_1\ge 0 $ and $ s_2\le -2 $. 
	For $ r\ge 3 $, by Lemma \ref{lem: property of continuous fraction}, 
	we just need to show that the trace of the monodromy matrix  is not equal to $2$.
	
	For this purpose, we recall the following observation in Lemma 5.2 in \cite{Ne81-calculus}:
	Suppose $t_i \le -2$ for all $i=1,\dots,r$ and define $ p_j,q_j\in \ZZ $ recursively by \begin{align*} 
		p_{-1}=0,p_0=1,p_{i+1}=-t_{i+1}p_i-p_{i-1};\\
		q_{-1}=-1,q_0=0,q_{i+1}=-t_{i+1}q_i-q_{i-1}. \end{align*}
	Then $$ A(-t_1,\dots,-t_j)=  \begin{pmatrix}
	p_j & q_j \\
	-p_{j-1} & -q_{j-1}\end{pmatrix} ,$$
	with  $p_j \ge p_{j-1}+1 \ge 0$ and $q_j \ge q_{j-1}+1 \ge 0$ for all $ j $.
	
	Also notice that $p_j \ge q_j+1$ for all $j$, which can be verified by induction:
	Let $ \Delta_j=p_j-q_j $. Since $ \Delta_{-1}=\Delta_0=1 $, it suffices to show $ \Delta_{j+1}\ge \Delta_j $. It follows from the fact that $ \Delta_{j+1}-\Delta_j=(-t_{j+1}-1)\Delta_j-\Delta_{j-1}\ge \Delta_j-\Delta_{j-1} $ as $ t_{j+1}\le -2 $.

	We  apply this observation   to  the chain of spheres with negative self-intersection.
	In the first case where $ (s_1,\dots,s_r)=(s_1,t_1,\dots,t_{r-1}) $ with $ s_1\ge 0 $ and $ t_i\le -2 $,  the monodromy matrix is 
	\[A(-s_1,-t_1\dots,-t_{r-1})= A(-t_1, \cdots, -t_{r-1}) \begin{pmatrix} -s_1 &1\\-1 &0\end{pmatrix}   =\begin{pmatrix}
	-s_1p_{r-1}-q_{r-1} & p_{r-1} \\
	s_1p_{r-2}+q_{r-2} & -p_{r-2}\end{pmatrix},
	\]
	with $p_r, q_r$ defined as in the observation above.  The trace is $ -s_1p_{r-1}-q_{r-1}-p_{r-2}\le -q_{r-2}-p_{r-2}-1\le -2(q_{r-2}+1)\le -2 $, so
	not equal to $2$. 
	In the second case where $ (s_1,\dots,s_r)=(0,0,t_1,\dots,t_{r-2}) $ , the monodromy matrix is  
	\[A(0,0,  -t_1, \dots,-t_{r-2})=\begin{pmatrix}
		p_{r-2} & q_{r-2} \\
	-p_{r-3} & -q_{r-3}
	\end{pmatrix} \begin{pmatrix}
		0 & 1\\-1 & 0
	\end{pmatrix}\begin{pmatrix}
		0 & 1\\-1 & 0
	\end{pmatrix}= \begin{pmatrix}
	-p_{r-2}
	& -q_{r-2}\\
	p_{r-3}
	& q_{r-3} \end{pmatrix}.\] 
	The trace is $ -p_{r-2}+q_{r-3}\le -p_{r-2}+q_{r-2}-1\le -2 $, so again cannot be $ 2 $.
\end{proof}




\subsection{Homological restrictions when $ b^+(X)=1 $}\label{subsection:homological}
In this subsection, we assume $ D=\cup C_i $ is a circular spherical divisor. Let $ r(D) $ denote the number of components of $ D $ and $r^{\geq 0}(D)$ the number of components with non-negative self-intersection.
Here are some restrictions on homologous components of $D$. 

\begin{lemma}  \label{lemma:homologous components}  For any $D$ embedded in a smooth 4-manifold $ X $, we have the following \begin{enumerate}[\indent $ (1) $]
		\item At most three components   are homologous in $X$. There are three homologous components only if $r(D)=3$. 
		\item There are a pair of homologous components only if  $r(D)\leq 4$.
		\item If $[C_i]=[C_{i+1}]$ for some $i$  then $r(D)=3, 
		s_i=s_{i+1}=1$, or $r(D)=2, s_i=s_{i+1}=2$. 
	\end{enumerate}
\end{lemma}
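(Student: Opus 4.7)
The plan is to exploit the rigidity of the cycle's intersection pattern: since all intersections in $D$ are positive and transverse, for $i\ne j$ the algebraic pairing $[C_i]\cdot[C_j]$ equals the geometric intersection number of $C_i$ and $C_j$, which is $2$ when $r=2$ (two intersection points in the length-$2$ cycle), and for $r\ge 3$ equals $1$ if they are cyclically adjacent and $0$ otherwise. Whenever two components are homologous, equating $s_i=[C_i]^2$ with $[C_i]\cdot[C_j]$ immediately pins down their self-intersection, and comparing pairings against a third component produces sharp restrictions on $r$.

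I would first prove (3). If $[C_i]=[C_{i+1}]$ and $r=2$, then $s_i=s_{i+1}=[C_i]\cdot[C_{i+1}]=2$. If $r\ge 3$, cyclic adjacency gives $s_i=s_{i+1}=[C_i]\cdot[C_{i+1}]=1$; moreover, for $r\ge 4$ the neighbor $C_{i-1}$ is adjacent to $C_i$ but, since the cyclic distance from $i-1$ to $i+1$ is at least $2$ in both directions, not to $C_{i+1}$, producing the contradiction $1=[C_{i-1}]\cdot[C_i]=[C_{i-1}]\cdot[C_{i+1}]=0$. Hence the adjacent-equal case occurs only for $r=2$ or $r=3$, which is exactly (3).

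For (2), assume $[C_i]=[C_j]$ with $i\ne j$ and $r\ge 3$. The adjacent subcase is handled by (3), forcing $r=3$. In the non-adjacent subcase, $[C_i]\cdot[C_j]=0$ gives $s_i=s_j=0$; pairing $[C_i]$ and $[C_j]$ against each of $C_i$'s two neighbors $C_{i\pm 1}$ then forces $[C_{i\pm 1}]\cdot[C_j]=1$, so both $C_{i-1}$ and $C_{i+1}$ must be cyclically adjacent to $C_j$. The two neighbors of $C_j$ therefore coincide with $\{C_{i-1},C_{i+1}\}$, which cyclically forces $j\equiv i+2\equiv i-2\pmod r$, i.e.\ $r\mid 4$. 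Combined with the trivial $r=2$ case, this yields $r\le 4$.

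Finally, for (1), if three distinct components are homologous, then $r\le 4$ by (2). In a cycle of length at most $4$, any three components must contain a cyclically adjacent pair, so (3) forces $r=3$. The main bookkeeping subtlety is tracking cyclic distances correctly in the small cases $r=2,3,4$, where "adjacent" for $r=2$ means double intersection and for $r=3$ means every pair is adjacent; once the intersection-pattern dictionary above is set up, the rest of the argument is mechanical.
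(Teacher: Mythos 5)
Your proof is correct and follows essentially the same route as the paper's: both arguments rest on the fact that homologous components must have identical pairings with every component, while positivity and transversality force all pairwise pairings to be read off from the cycle's adjacency pattern, so equating them pins down $r$ and the self-intersections. The paper merely organizes the cases by the value of the common square $a^2\in\{0,1,2\}$ instead of by cyclic distance, but the content is the same.
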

\begin{proof}
	Suppose there are $ m $ components homologous to $ a\in H_2(X) $ in $ D $. Note that $ a^2\in \{ 0, 1,2\} $ because the divisor has only one cycle and the components are required to intersect positively and transversally. If $ a^2=1 $, then these components are all adjacent. In order to form exactly one cycle, $ m $ is at most $ 3 $. In particular, when $ m=3 $, there cannot be other components, i.e. $ r(D)=3 $. If $ a^2=0 $, there is another component $ C $ intersecting all these components. Again in order to form exactly one cycle, $ m $ is at most $ 2 $. Similarly if $ a^2=2 $, they are adjacent and we must have $ m=r(D)=2 $. This proves (1) and (3).
	
	Suppose $ C_i,C_j $ are a pair of homologous components in $ D $. If they are adjacent, then $ [C_i]\cdot [C_j]=1 $ or $ 2 $ and $ r(D)\le 3 $ by the above discussion. If they are not adjacent, then any other component intersecting $ C_i $ must also intersect $ C_j $. There must be exactly two such components to form a cycle. So $ r(D)=4 $ and this proves (2).
\end{proof}

Note that the above restrictions hold locally. When $ X $ is closed with $b^+(X)=1$, there are various restrictions on components with non-negative self-intersection.

\begin{lemma} \label{lem: non-negative components}
	Suppose $D$ is embedded in a closed manifold $X$ with  $b^+(X)=1$. 
	\begin{enumerate}[\indent $ (1) $]
		\item If $C_i$ and $C_j$ are not adjacent and $s_i\geq 0, s_j\geq 0$, then $[C_i]=\pm [C_j]$ and $s_i=s_j=0$. 
		\item $r^{\geq 0}(D)\leq 4$. 
		\item $r^{\geq 0}(D)=4$ only if $r(D)=4, s_i=0$ for each $i$ and $[C_1]=[C_3], [C_2]=[C_4]$. 
		\item Suppose $r(D)\geq 3$. If  $s_i\geq 1, s_{i+1}\geq 1$ for some $i$, then $[C_i]=[C_{i+1}]$ and $s_i=s_{i+1}=1$. This is only possible when $r(D)=3$. 	
	\end{enumerate}
\end{lemma}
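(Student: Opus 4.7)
The plan is to deploy the linear algebra of the intersection form on $H_2(X;\RR)$, which has signature $(1,b^-(X))$ by the hypothesis $b^+(X)=1$. Two consequences of this signature will do all the work: (a) \emph{reverse Cauchy--Schwarz:} if $\alpha^2>0$ then $(\alpha\cdot\beta)^2\ge\alpha^2\beta^2$ for every $\beta\in H_2(X;\RR)$, with equality iff $\beta\in\RR\alpha$ (equivalently $\alpha^\perp$ is negative definite); and (b) for any nonzero null class $\alpha$, the quotient $\alpha^\perp/\RR\alpha$ is negative definite of dimension $b^-(X)-1$ (proved by completing $\alpha$ to a hyperbolic pair $(\alpha,\beta)$ with $\alpha\cdot\beta=1$ and observing $\alpha^\perp=\RR\alpha\oplus W$, where $W=\langle\alpha,\beta\rangle^\perp$). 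I will use (a) for part (1) and (b) for part (4); parts (2) and (3) will follow from (1) by cycle-graph combinatorics.

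For part (1), assume $C_i,C_j$ are non-adjacent with $s_i,s_j\ge 0$ and WLOG $s_i\ge s_j$. If $s_i>0$, then (a) gives $0=([C_i]\cdot[C_j])^2\ge s_is_j\ge 0$, forcing $s_j=0$ together with equality in Cauchy--Schwarz; this puts $[C_j]\in\RR[C_i]$, but $[C_j]^2=0$ while $[C_i]^2>0$ then forces $[C_j]=0$, contradicting the non-trivial intersections of $C_j$ with its neighbors. Thus $s_i=s_j=0$, the span $\RR[C_i]+\RR[C_j]$ is isotropic and hence of dimension at most one, and $[C_i]=\lambda[C_j]$ in $H_2(X;\RR)$. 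Pairing with any neighbor $C_k$ of $C_i$ yields $1=[C_k]\cdot[C_i]=\lambda\,([C_k]\cdot[C_j])$ with $[C_k]\cdot[C_j]\in\{0,1\}$, which forces $\lambda=1$ and, as a byproduct, forces every neighbor of $C_i$ to also be a neighbor of $C_j$.

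For parts (2) and (3), I would case-split on $r(D)$. When $r(D)\le 4$ the bound $r^{\ge 0}(D)\le r(D)\le 4$ is automatic. When $r(D)\ge 5$, the set $S=\{k:s_k\ge 0\}$ cannot contain a non-adjacent pair: such a pair would, by the byproduct above, share both neighbors, but in a cycle of length $\ge 5$ two non-adjacent vertices never share both neighbors. Hence $S$ is a clique in the cycle graph $C_r$, whose clique number is $2$ for $r\ge 4$, so $|S|\le 2$. For (3), $r^{\ge 0}(D)=4$ excludes $r(D)\in\{2,3\}$ (too few vertices) as well as $r(D)\ge 5$, forcing $r(D)=4$; then all four components lie in $S$, and applying (1) to both diagonals $(C_1,C_3)$ and $(C_2,C_4)$ delivers $s_i=0$ for all $i$ together with $[C_1]=[C_3]$ and $[C_2]=[C_4]$.

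For part (4), the easy step is to pin down $s_i=s_{i+1}=1$: if either exceeds $1$, the block $\bigl(\begin{smallmatrix}s_i&1\\1&s_{i+1}\end{smallmatrix}\bigr)$ is positive definite and $[C_i],[C_{i+1}]$ are linearly independent (any relation $[C_i]=\mu[C_{i+1}]$ forces $s_i=s_{i+1}=1$), yielding a positive-definite $2$-plane in $H_2(X;\RR)$ and contradicting $b^+(X)=1$. The crux is then to upgrade to $[C_i]=[C_{i+1}]$. Set $\alpha:=[C_i]-[C_{i+1}]$, so $\alpha^2=0$ and $\alpha\cdot[C_i]=0$; if $\alpha\ne 0$, tool (b) makes $\alpha^\perp/\RR\alpha$ negative definite, yet the image of $[C_i]$ in this quotient has self-intersection $[C_i]^2=1>0$, forcing $[C_i]\in\RR\alpha$ and hence $1=[C_i]^2=0$, a contradiction. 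Thus $\alpha=0$, i.e., $[C_i]=[C_{i+1}]$, and Lemma~\ref{lemma:homologous components}(3) together with $r(D)\ge 3$ yields $r(D)=3$. I expect this null-vector quotient step to be the main obstacle: the $3\times 3$ form $Q_D=\bigl(\begin{smallmatrix}1&1&1\\1&1&1\\1&1&s_3\end{smallmatrix}\bigr)$ is compatible with $b^+=1$ for $s_3\le 1$ even when the classes are linearly independent, so no direct signature argument on $Q_D$ alone will suffice.
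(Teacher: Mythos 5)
Your proof is correct. Parts (1)--(3) follow the same route as the paper: the paper simply invokes the light cone lemma for $b^+(X)=1$ and says (2), (3) follow from (1), whereas you prove the light-cone facts from scratch (reverse Cauchy--Schwarz for signature $(1,b^-)$, plus the bound on isotropic subspaces) and spell out the cycle-graph combinatorics; this is the same argument, just made self-contained. Part (4) is where you genuinely diverge. The paper toric blows up the intersection point of $C_i$ and $C_{i+1}$, so that the proper transforms become \emph{disjoint} spheres of square $s_i-1,\,s_{i+1}-1\ge 0$ in a blow-up that still has $b^+=1$, and then applies part (1) there to get $[C_i]-E=[C_{i+1}]-E$, hence $[C_i]=[C_{i+1}]$ and $s_i=s_{i+1}=1$ in one stroke. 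You instead stay in $X$: a positive-definite $2$-plane argument pins down $s_i=s_{i+1}=1$, and the hyperbolic-pair decomposition $\alpha^\perp=\RR\alpha\oplus W$ for the null class $\alpha=[C_i]-[C_{i+1}]$ forces $\alpha=0$. Both arguments are applications of the same signature-$(1,n)$ linear algebra; the paper's blow-up trick is shorter and reuses part (1) verbatim, while yours avoids changing the ambient manifold and makes explicit why $Q_D$ alone cannot suffice (your closing remark about the degenerate $3\times 3$ form is a correct and worthwhile observation). The final step, deducing $r(D)=3$ from Lemma~\ref{lemma:homologous components}(3), is identical in both.
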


\begin{proof}
	Since $b^+(X)=1$, by the light cone lemma (cf. \cite{McDuffSalamon1996}), any two disjoint components with non-negative self-intersection must be homologous up to sign and have self-intersection $0$. 
	
	(2) and (3) follow from the (1). 
	
	
	For (4), we can assume the two spheres are $C_1$ and $C_2$. 
	Since $r(D)\geq 3$, we have $[C_1]\cdot[C_2]=1$. By toric blowing up the intersection point between $C_1$ and $C_2$, we get two disjoint spheres with classes $ [C_1']=[C_1]-E $ and $ [C_2']=[C_2]-E $, where $ E $ is the exceptional class and $ [C_1']^2=[C_1]^2-1\ge 0 $, $ [C_2']^2=[C_2]^2-1\ge 0 $. Then by (1), we have $ [C_1']=[C_2'] $ with $ [C_1']^2=0 $ and thus $ [C_1]=[C_2] $ with $ [C_1]^2=1 $. The fact that $ r(D)=3 $ follows from (3) in Lemma \ref{lemma:homologous components}.
	
\end{proof}
\begin{remark}
	Note in (1) of Lemma \ref{lem: non-negative components}, if $ C_i $ and $ C_j $ are symplectic spheres, we would have $ [C_i]=[C_j] $.
\end{remark}
\begin{lemma}\label{lemma:topological cyclic}
	Suppose $D$ has $ b^+(Q_D)= 1 $ and is embedded in a closed manifold $X$ with  $b^+(X)=1$. Let $ k,p,p_1,p_2 $ be integers such that $ k\ge 0 $ and $ p,p_1,p_2<0 $. Up to cyclic and anti-cyclic permutations of $D$, we have the following.
	
	\begin{enumerate}[\indent$  (1) $]
		\item If $r(D)\geq 5$, then $r^{\geq 0}(D)\leq 2$. When $r^{\geq 0}(D)=2$,    $s_1\geq 0, s_2=0$. 
		
		\item If $r(D)=4$ and  $r^{\geq 0}(D)\geq 3$, then  $D=(k, 0, p, 0), k+p\leq 0$ and $  [C_2]=[C_4] $.
		
		\item If  $r(D)=4$ and $r^{\geq 0}(D)=2$, then the only possibilities of $D$ are\\
		(i) $(0, p_1, 0, p_2), [C_1]=[C_3]$, \\(ii) $(k, 0, p_1, p_2), p_1+p_2+k\leq 0$. 
		
		\item If  $r(D)=3$ and $r^{\geq 0}(D)=3$, then the only possibilities of $D$ are \\(i) $(1, 1, 1), [C_1]=[C_2]=[C_3]$,  \\(ii) $(1, 1, 0), [C_1]=[C_2]$, \\(iii)  $(k, 0, 0)$, $ k\le 2 $.
		
		\item If $r(D)=3$ and $r^{\geq 0}(D)=2$, then the only possibilities of  $D$ are \\(i)  $(1, 1, p), [C_1]=[C_2]$,  \\(ii)  $(k , 0, p), p+k\leq 2$.
		
		\item If  $r(D)=2$ and  $r^{\geq 0}(D)=2$, then $D$ is one in family $ \mc{F}(2,2)= $\\
		$\{(4, 1), (4, 0), (3, 1), (3,0),  (2, 2), (2, 1), (2, 0), 
		(1, 1), (1, 0), (0, 0)\}$.
		
		\item If  $r(D)=2$ and  $r^{\geq 0}(D)=1$, then  $D=(k, p)$.
		
		\item If  $r(D)=2$ and  $r^{\geq 0}(D)=0$, then   $D$ is one in family $ \mc{F}(2,0)= $ \\
		$\{(-1, -1),  (-1, -2), (-1, -3) \}$.
	\end{enumerate}
	
\end{lemma}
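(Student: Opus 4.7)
The plan is a case analysis on the pair $(r(D), r^{\geq 0}(D))$, combining three ingredients: Lemma \ref{lemma:homologous components} on local configurations of homologous components, Lemma \ref{lem: non-negative components}, which packages the light-cone inequality on $X$, and the hypothesis $b^+(Q_D)=1$ itself. By Cauchy interlacing, the latter forces every principal submatrix of $Q_D$ to satisfy $b^+\le 1$, yielding numerical constraints on the $s_i$.

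The observation driving the layout step is: if $C_i, C_j$ are non-adjacent in $D$ with $[C_i]=\pm[C_j]$ in $H_2(X)$, and some $C_k\subset D$ is adjacent to exactly one of them, then $1=[C_i]\cdot[C_k]=\pm[C_j]\cdot[C_k]=0$, a contradiction. Hence Lemma \ref{lem: non-negative components}(1) can only produce non-adjacent non-negatives when $C_i, C_j$ share all neighbors in $D$, which in a cycle forces $r(D)=4$ with $\{i,j\}$ a diagonal. Combined with Lemma \ref{lem: non-negative components}(4), this fixes the admissible templates: for $r(D)\ge 5$, any non-negative components must be consecutive and number at most two, giving case (1); for $r(D)=4$, any non-adjacent non-negative pair must lie on a diagonal with zero self-intersection, giving cases (2) and (3); for $r(D)=3$, only Lemma \ref{lem: non-negative components}(4) applies, giving (4) and (5); and for $r(D)=2$, the template is a free pair $(s_1, s_2)$, giving (6)--(8).

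Once the template is fixed, the numerical bound comes from an explicit small-matrix computation. In case (2), $D=(k,0,p,0)$ has $[C_2]-[C_4]\in\ker Q_D$, and on a complementary subspace spanned by $\{e_1, e_3, e_2+e_4\}$ the form reduces to $\begin{pmatrix} k & 0 & 2 \\ 0 & p & 2 \\ 2 & 2 & 0 \end{pmatrix}$ of determinant $-4(k+p)$; trace and determinant comparison show $b^+(Q_D)=1$ is equivalent to $k+p\le 0$. Analogous determinant calculations yield $p+k\le 2$ in (5)(ii), $p_1+p_2+k\le 0$ in (3)(ii), the bound $k\le 2$ in (4)(iii) from the characteristic polynomial of $Q_{(k,0,0)}$, and for $r(D)=2$ the inequality $s_1 s_2\le 4$ from $\det Q_D\le 0$, yielding $\mathcal{F}(2,0)$ in (8) and the free template in (7).

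The main obstacle is case (6): the inequality $s_1 s_2\le 4$ allows pairs $(s_1,0)$ with arbitrary $s_1\ge 0$, so the bound $s_i\le 4$ in $\mathcal{F}(2,2)$ demands an extra argument. The natural route is to perform successive toric blow-ups on $(s_1, 0)$ to produce a longer cycle that eventually contains a $0$-sphere, then invoke a balancing move based at that $0$-sphere to fall into a configuration already constrained by (3) or (5); the resulting inequality on the balanced template forces $s_1\le 4$. Tracking the homological identifications $[C_1]=[C_3]$ in (2), $[C_1]=[C_2]=[C_3]$ in (4)(i), $[C_1]=[C_2]$ in (5)(i), which come directly from the applications of Lemma \ref{lem: non-negative components}(1) and (4), and verifying that the case analysis exhausts the possibilities, completes the proof.
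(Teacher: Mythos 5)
Your layout of the case analysis---non-adjacent non-negative components forced onto a diagonal of a $4$-cycle via Lemma~\ref{lem: non-negative components}(1) and Lemma~\ref{lemma:homologous components}, adjacent ones controlled by Lemma~\ref{lem: non-negative components}(4), followed by small determinant/interlacing computations for the numerical bounds---is the same strategy as the paper's proof. You are in fact more explicit than the paper about where the inequalities $k+p\le 0$, $p_1+p_2+k\le 0$, $p+k\le 2$ and $k\le 2$ come from (the paper largely asserts them); your computation of $\det=-4(k+p)$ on a complement of $[C_2]-[C_4]$ in case (2), and the $3\times 3$ determinant $2-k-p$ for $(k,0,p)$, both check out once one notes that the $\{C_1,C_2\}$ block already contributes a hyperbolic pair.

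The gap is in your treatment of case (6). You are right that $\det Q_D=s_1s_2-4\le 0$ does not exclude $(s_1,0)$ for $s_1\ge 5$---this is a genuine issue, and the paper's own one-line argument for case (6) does not address it either---but the repair you propose cannot work. A toric blow-up strictly decreases the self-intersections of the two components adjacent to the new $(-1)$-sphere, so from $(s_1,0)$ one obtains $(s_1-1,-1,-1)$ and thereafter only components of square at most $-1$ besides the image of $C_1$; no $0$-sphere is ever created, and the balancing move based at the original $0$-sphere of a length-$2$ cycle is vacuous because both of its neighbours are $C_1$. More fundamentally, any argument routed through toric equivalence can only see toric-equivalence invariants such as $b^+(Q_D)$ (Lemma~\ref{lemma:toric eq topological}), and $b^+(Q_{(k,0)})=1$ for every $k\ge 0$; the constraints in cases (3) and (5) are themselves consequences of $b^+(Q_D)=1$, so reducing to those templates yields no new inequality and cannot separate $(5,0)$ from $(4,0)$. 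In the paper the divisors $(k,0)$ with $k\ge 5$ are only eliminated downstream by genuinely symplectic arguments (the bound $[D]^2\le 9$ of Lemma~\ref{lemma:constraint on D^2} and the adjunction computation of Lemma~\ref{lemma:k+p=5}), not by anything available from the purely topological hypotheses of this lemma; if you want to close case (6) you will need an input of that kind rather than a toric move.
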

\begin{proof}
	\textbf{Case (1):} Suppose $r(D)\geq 5$. If   $r^{\geq 0}(D)\geq 3$ then two such components are not adjacent. But this is impossible due to the (1) of  Lemma \ref  {lem: non-negative components} and the (3) of Lemma \ref  {lemma:homologous components}. 
	Hence $r^{\geq 0}(D)\leq 2$ in this case. 
	When $r^{\geq 0}(D)=2$, the two components must be adjacent by the same reasoning. The claim that one of them has self-intersection $0$ follows from (4) of Lemma \ref  {lem: non-negative components} and the (3) of  Lemma \ref{lemma:homologous components}.
	
	\textbf{Case (2):} The proof is similar when $r(D)=4$ and $r^{\geq 0}(D)\geq 3$.   In this case  two such components are not adjacent, say $C_2, C_4$. By the (1) of  Lemma \ref  {lem: non-negative components}, $[C_2]=[C_4]$, $s_2=0=s_4$.  
	
	\textbf{Case (3):} Suppose  $r(D)=4$ and $r^{\geq 0}(D)=2$. 
	If two such components are not adjacent,  we can assume them to be $C_1, C_3$, 
	which satisfy $[C_1]=[C_3]$ and $s_1=s_3=0$ by the (1) of  Lemma \ref  {lem: non-negative components}.  If the two components are adjacent, 
	we can assume them to be
	$C_1, C_2$. Notice that $[C_1]\ne [C_2]$ due to the (3) of Lemma \ref  {lemma:homologous components}.  Now it follows from the 4th bullet of Lemma \ref  {lem: non-negative components} that 
	either $s_1=0$ or $s_2=0$. 	
	
	\textbf{Case (4):} Suppose $r(D)=3=r^{\geq 0}(D)$. Since $s_i\geq 0$ for any $i$,  It is easy to see (i), (ii), (iii) give all the possibilities by (4) of Lemma \ref  {lem: non-negative components}. It's easily checked by hand that $ (k,0,0) $ has $ b^+\ge 2 $ when $ k\ge 3 $.

	\textbf{Case (5):} If $r^{\geq 0}(D)=2$, apply (4) of Lemma \ref  {lem: non-negative components} to the pair of components $C_i, C_j$ with $s_i\geq 0, s_j\geq 0$. 
	
	\textbf{Case (6)(7)(8):} Suppose $r(D)=2$.  Then we just check that the determinant of $Q_D=s_1 s_2-4 \leq 0$.

	
	
\end{proof}
%

\section{Embeddability and rigidity when $b^+(D)\ge 1$}\label{section:rigid}

This section is devoted to the proof of Theorem \ref{thm:embeddable=rigid} and \ref{thm:list}. 
We start with the following observation on the embeddability of circular spherical divisors with $b^+\ge 2$.
\begin{lemma}\label{lemma:b^+ leq 1}
	A topological circular spherical divisor $ D $ cannot be symplectically embedded in a closed symplectic 4-manifold if $ b^+(D)\ge 2 $.
\end{lemma}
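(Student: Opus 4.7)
The plan is a proof by contradiction combining intersection-form inheritance with McDuff's classification of closed symplectic 4-manifolds containing a symplectic sphere of non-negative self-intersection. Suppose $D$ admits a symplectic embedding into a closed symplectic 4-manifold $(X,\omega)$.

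First I would argue that $b^+(X)\geq 2$. The inclusion $N_D\hookrightarrow X$ induces a homomorphism $\iota\colon H_2(N_D;\RR)\to H_2(X;\RR)$ which preserves intersection pairings; in particular $\iota(v)\cdot\iota(v)=v\cdot v$ for every $v\in H_2(N_D;\RR)$. Choose a positive definite subspace $V\subset (H_2(N_D;\RR),Q_D)$ of dimension $b^+(D)\geq 2$. Any nonzero $v\in V$ satisfies $v\cdot v>0$, hence $\iota(v)\cdot\iota(v)>0$ and so $\iota(v)\neq 0$; thus $\iota|_V$ is injective and $\iota(V)\subset H_2(X;\RR)$ is a subspace of dimension $\geq 2$ on which the intersection form is positive definite, yielding $b^+(X)\geq 2$.

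Next I would reduce to the situation of a symplectic sphere of non-negative self-intersection inside $D$. Each $(-1)$-component of $D$ is a symplectic exceptional sphere in $X$, which can be symplectically blown down; on the divisor side this realizes a toric blow-down, and the remaining components continue to form a symplectic circular spherical divisor in the blown-down symplectic manifold $(X',\omega')$. Toric blow-down preserves $b^+(D)$ by Lemma \ref{lemma:toric eq topological}, and symplectic blow-down preserves $b^+$ of the ambient 4-manifold. Iterating, we reach either a toric minimal divisor or a divisor of the form $(-1,p)$. In the latter case the rank-$2$ intersection form has determinant $-p-4$ and trace $p-1$; these cannot be simultaneously positive, so $b^+((-1,p))\leq 1$, contradicting $b^+(D)\geq 2$. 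Hence the reduced divisor is toric minimal, and by Lemma \ref{lem: not negative semi-definite => at least one non-negative}(1) some component has non-negative self-intersection.

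Finally I would invoke McDuff's theorem: the reduced manifold $(X',\omega')$ contains an embedded symplectic sphere of non-negative self-intersection, hence is symplectically rational or ruled, so $b^+(X')=1$. Since $b^+$ is preserved by the (symplectic) blow-downs relating $X$ and $X'$, this gives $b^+(X)=1$, contradicting the conclusion $b^+(X)\geq 2$ of the first step.

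The main obstacle I anticipate is the bookkeeping of the blow-down process: one must verify that each $(-1)$-component of the symplectic divisor $D$ is in fact an exceptional symplectic sphere in $X$ that can be blown down while preserving the symplectic circular divisor structure on the remaining components, so that the reduction stays in the symplectic category all the way until McDuff's theorem applies.
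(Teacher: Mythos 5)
Your proposal is correct and follows essentially the same route as the paper: reduce by toric blow-downs to a toric minimal divisor or $(-1,p)$, rule out $(-1,p)$ directly, use Lemma \ref{lem: not negative semi-definite => at least one non-negative} to find a component of non-negative square, and invoke McDuff to force $b^+(X)=1$. You merely make explicit the inequality $b^+(D)\le b^+(X)$ (via the intersection-form-preserving map $H_2(N_D;\RR)\to H_2(X;\RR)$), which the paper leaves implicit.
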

\begin{proof}
	Toric blow down $ D $ to $ \overline{D}\subset (\overline{X},\overline{\omega}) $ so that $ \overline{D} $ is either $ (-1,p) $ or toric minimal and $ b^+(\overline{D})=b^+(D) $ by Lemma \ref{lemma:toric eq topological}.
	Note that a divisor of form $ (-1,p) $ always has $ b^+\le 1 $. Now suppose $ \overline{D} $ is toric minimal. 
	Then there is at least one component $ C_i $ in $ D $ with $ C_i^2\ge 0 $ by Lemma \ref{lem: not negative semi-definite => at least one non-negative}, which implies $ (X,\omega ) $ is rational or ruled (\cite{Mc90-structure}).
	This implies $ b^+(D)\le 1 $.
\end{proof}
As a result, there is no symplectically embeddable circular spherical divisor with $ b^+\ge 2 $ and we only need to consider the case with $b^+=1$. Next we show that the several properties of circular spherical divisors are preserved under toric equivalence, so it suffices to consider the toric minimal divisors.

\begin{lemma}\label{lemma:toric eq preserves}
	A circular spherical divisor $ D $ being symplectically embeddable, rationally embeddable, anti-canonical or rigid is preserved under toric equivalence.
\end{lemma}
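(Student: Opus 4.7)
The plan is to reduce to the invariance under a single toric blow-up, since toric equivalence is generated by toric blow-ups and toric blow-downs. The key geometric input is the standard correspondence between a toric blow-up at an intersection point $p = C_i \cap C_{i+1}$ of $D$ and a symplectic blow-up of the ambient manifold at $p$: after a symplectic blow-up $\pi : (X', \omega') \to (X, \omega)$ of sufficiently small size, the proper transforms $\widetilde{C}_j$ together with the exceptional sphere $E$ form a symplectic circular spherical divisor whose dual graph matches the toric blow-up $D'$ of $D$ (the self-intersections of $\widetilde{C}_i, \widetilde{C}_{i+1}$ each drop by $1$, while $E$ is a $(-1)$-sphere meeting exactly $\widetilde{C}_i$ and $\widetilde{C}_{i+1}$). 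Conversely, if $D'$ is realized symplectically, McDuff's blow-down theorem applied to the distinguished symplectic $(-1)$-sphere component recovers a symplectic embedding of $D$.

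Symplectic embeddability is therefore immediate, and rational embeddability follows because symplectic blow-up and blow-down preserve the rational class (by the classification of rational symplectic $4$-manifolds). For the anti-canonical property I would perform the homology computation in $X'$: with $e = PD_{X'}([E])$, one has $c_1(X',\omega') = \pi^\ast c_1(X,\omega) - e$, while since exactly the two components $C_i, C_{i+1}$ pass through $p$ transversally, the proper transforms satisfy $[\widetilde{C}_i] = [C_i] - [E]$ and $[\widetilde{C}_{i+1}] = [C_{i+1}] - [E]$, with $[\widetilde{C}_j] = [C_j]$ otherwise. Summing and adding the exceptional component gives $[D'] = [D] - [E]$. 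Applying Poincar\'e duality, $[D] = PD(c_1(X,\omega))$ if and only if $[D'] = PD(c_1(X',\omega'))$, so the symplectic Looijenga pair property is preserved.

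The main subtlety is the rigidity statement. Here the crucial observation is that $X' \setminus D'$ is symplectomorphic to $X \setminus D$, since the blow-up is supported in an arbitrarily small neighborhood of the point $p \in D$ that is removed in both complements. In particular, $X \setminus D$ is minimal if and only if $X' \setminus D'$ is. Assuming $D$ is rigid and given an embedding of $D'$ into $(X', \omega')$ with minimal complement, blowing down the symplectic $(-1)$-component of $D'$ yields an embedding of $D$ into $(X,\omega)$ with minimal complement; rigidity of $D$ then makes $(X, D, \omega)$ a symplectic Looijenga pair, and by the anti-canonical invariance already established, so is $(X', D', \omega')$. The reverse implication, starting from rigid $D'$ and blowing up an arbitrary minimal-complement embedding of $D$, is entirely symmetric. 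This completes the proof plan for all four properties.
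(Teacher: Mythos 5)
Your proposal is correct and follows essentially the same route as the paper: reduce to a single toric blow-up realized as a symplectic blow-up (with blow-down of the exceptional component for the converse), check $[D']=[D]-[E]$ against $c_1(X',\omega')=\pi^*c_1(X,\omega)-PD[E]$ for the anti-canonical property, and transfer minimality of complements for rigidity. The only cosmetic difference is in justifying that minimality transfer: you identify the complements directly, whereas the paper lifts exceptional curves away from $D$ to exceptional curves in $X'-D'$; note that the symplectic form on $X'$ is modified in a neighborhood of $E$ that is not contained in $D'$, so ``symplectomorphic'' should more precisely be ``diffeomorphic (and symplectomorphic up to deformation),'' which is all that is needed.
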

\begin{proof}
	Since toric blow-ups and blow-downs can be realized by symplectic blow-ups and blow-downs when the divisor is symplectic and symplectic blow-up or blow-down of a symplectic rational surface is still rational, it's clear that being symplectically embeddable and rationally embeddable is preserved.
	
	Let $ (X,D,\omega ) $ be a symplectic Looijenga pair. Blow up at a transverse intersection point of $ D $ to get $ (X',\omega') $ with the natural inclusion $ \iota_*:H_2(X;\ZZ )\to H_2(X';\ZZ ) $. Denote by $ D' $ the union of the proper transform of $ D $ and the exceptional curve $ E $. Then $ D' $ is a toric blow-up of $ D $ with $ [D']=[D]-[E] $. So $ [D']=\iota_*[D]-[E]=\iota_*(-K_X)-[E]=-K_{X'} $ and $ (X',D',\omega') $ is also a symplectic Looijenga pair. The proof for toric blow-down is the same but goes backwards.
	
	

	Let $ D $ be a circular spherical divisor and $ D' $ a toric blow-up of $ D $ with exceptional component $ E $. Suppose $ D $ is rigid. For any symplectic embedding of $ D' $ into $ (X',\omega') $, we can blow-down $ E $, which is a symplectic exceptional sphere in $ (X',\omega') $, to get a symplectic embedding of $ D $ into $ (X,\omega) $. If $ X'-D' $ is minimal, then $ X-D $ is also minimal, since any exceptional curve away from $ D $ would lift to an exceptional curve in $ X'-D' $. Now that $ D $ is anti-canonical in $ (X,\omega) $, we have $ D' $ is anti-canonical in $ (X',\omega') $ by the previous paragraph. So $ D' $ is rigid. The same argument goes backwards and proves that $ D $ is rigid if $ D' $ is rigid.
\end{proof}
Notice that we have the following sequence of implications simply by their definitions.\[
\text{symplectically embeddable} \Leftarrow \text{rationally embeddable} \Leftarrow \text{anti-canonical} \Leftarrow \text{rigid}
\]
To prove Theorem \ref{thm:embeddable=rigid}, it suffices to prove the converse for every arrow above. 
We recall facts about deformation classes of symplectic log Calabi-Yau pairs in Section \ref{section:log CY} and recollect some tools from pseudoholomorphic curves in Section \ref{section:maximal surface}. These will be useful also in later sections.
The proofs are distributed in Section \ref{subsection:symp embed = rational embed} through \ref{subsection:anti-canonical = rigid}, as indicated by the corresponding titles. The proof of Theorem \ref{thm:list} is contained in Section \ref{subsection:rationally embeddable}.

\subsection{Symplectic log Calabi-Yau pairs}\label{section:log CY}
Recall that a symplectic log Calabi-Yau pair $(X,D,\omega)$ is a closed symplectic 4-manifold $(X,\omega)$ together with a nonempty symplectic divisor $D=\cup C_i$ representing the  Poincare dual of $c_1(X,\omega )$.
It is called a symplectic Looijenga pair if $D$ is a cycle of spheres.
In this section, we review some facts about deformation classes of symplectic log Calabi-Yau pairs studied in \cite{LiMa16-deformation}. In particular, Theorem \ref{thm:minimal model} gives a list of anti-canonical circular spherical divisors in $ \CC\PP^2,S^2\times S^2 $ and $ \CC\PP^2\#\overline{\CC\PP^2} $, and Corollary \ref{cor: finite deformation} serves as the source of finiteness for symplectic fillings in Theorem \ref{thm:embeddable=fillable}.

We have introduced toric blow-ups and blow-downs in Definition \ref{def:toric eq}. Here we introduce another pair of operations on symplectic divisors.
\begin{definition}
	A {\bf non-toric blow-up} of $D$ is the  proper transform of a symplectic blow-up centered  at a smooth point of $D$.
	A  {\bf non-toric blow-down} is the reverse operation which symplectically blows down an exceptional sphere not contained in $ D $.
\end{definition}
Both toric and non-toric blow-ups/downs preserve the log Calabi-Yau condition and have analogues in the holomorphic category.

Through a maximal sequence of non-toric blow-downs and then a maximal sequence of toric blow-downs, 
we can get a symplectic Looijenga pair $ (\overline{X},\overline{D},\overline{\omega}) $ from any symplectic Looijenga pair $ (X,D,\omega ) $ such that $ \overline{X}=\CC\PP^2,S^2\times S^2 $ or $ \CC\PP^2\#\overline{\CC\PP}^2 $. 

Various notions of equivalences have been introduced in the study of symplectic deformation classes of symplectic log Calabi-Yau pairs in \cite{LiMa16-deformation}, the following one is related to symplectic deformation of fillings.
\begin{definition}
	
	Let $(X^0,D^0,\omega^0)$ and $(X^1, D^1,\omega^1)$ be pairs of 4-manifolds and symplectic divisors in them. When $X^0=X^1$,  they are said to be 
	\textbf{symplectic homotopic}  if $(D^0, \omega^0)$ and $(D^1, \omega^1)$  are connected by a family of symplectic divisors $(D^t, \omega^t)$.
	$(X^0, D^0, \omega^0)$ and $(X^1, D^1, \omega^1)$  are said to be 
	\textbf{symplectic deformation equivalent} if they are symplectic homotopic,  up to an orientation preserving  diffeomorphism. 
\end{definition}

We recall here the deformation classes of symplectic  Looijenga pairs with the ambient manifold being minimal or $ \CC\PP^2\#\overline{\CC\PP}^2 $, all of them having length less than $5$. 
\begin{theorem}[\cite{LiMa16-deformation}, Theorem 2.4 in \cite{Fr}]\label{thm:minimal model}
	Any symplectic log Calabi-Yau pair $ (X,D,\omega) $ with $ X=\CC\PP^2,S^2\times S^2 $ or $ \CC\PP^2\#\overline{\CC\PP}^2 $ is symplectic deformation equivalent to one of the following. Each of them is realized by a K\"ahler pair. In particular, all circular spherical divisors listed below are anti-canonical.
	
	
	$\bullet$  Case $(B)$: $X=\mathbb{CP}^2$,
	$c_1=3h$.

	$(B1)$ $D$ is a torus, 
	
	$(B2)$ $D$ consists of a $h-$sphere and a $2h-$sphere, or
	
	$(B3)$ $D$ consists of three $h-$spheres.
	The graphs in (B1), (B2), and (B3) are given respectively  by
	\[
	\begin{tikzpicture}
	\node (x) at (-0.5,0) [circle,fill,outer sep=5pt, scale=0.5] [label=above:$ 9 $]{};
	
	\node (x1) at (1,0) [circle,fill,outer sep=5pt, scale=0.5] [label=above:$ 1 $]{};
	\node (y1) at (2.5,0) [circle,fill,outer sep=5pt, scale=0.5] [label=above:$ 4 $]{};
	\draw (x1) to[bend right] (y1);\draw (y1) to[bend right] (x1);
	
	\node (x2) at (4,0) [circle,fill,outer sep=5pt, scale=0.5] [label=above:$ 1 $]{};
	\node (y2) at (5.5,0) [circle,fill,outer sep=5pt, scale=0.5] [label=above:$ 1 $]{};
	\node (z2) at (4,-1) [circle,fill,outer sep=5pt, scale=0.5] [label=below:$ 1 $]{};
	\draw (x2) -- (y2);\draw (y2) -- (z2);\draw (z2) -- (x2);
	\end{tikzpicture}
	\]	

	$\bullet$ Case $(C)$: $X=S^2 \times S^2$, $c_1=2f_1+2f_2$, where $f_1$ and $f_2$ are the homology classes
	of the two factors. 
	
	$(C1)$ $D$ is a torus. 
	
	$(C2)$ $r(D)=2$ and  $[C_1]=bf_1+f_2, [C_2]=(2-b)f_1+f_2$.
	
	$(C3)$ $r(D)=3$ and  $[C_1]=bf_1+f_2, [C_2]=f_1, [C_3]=(1-b)f_1+f_2$.
	
	$(C4)$ $r(D)=4$ and $[C_1]=bf_1+f_2,  [C_2]=f_1, [C_3]=-bf_1+f_2, [C_4]=f_1$.
	
	The graphs in (C1), (C2), (C3) and (C4) are given respectively  by
	\[
	\begin{tikzpicture}
	\node (x) at (-0.5,0) [circle,fill,outer sep=5pt, scale=0.5] [label=above:$ 8 $]{};
	
	\node (x1) at (1,0) [circle,fill,outer sep=5pt, scale=0.5] [label=above:$ 2b $]{};
	\node (y1) at (2.5,0) [circle,fill,outer sep=5pt, scale=0.5] [label=above:$ 4-2b $]{};
	\draw (x1) to[bend right] (y1);\draw (y1) to[bend right] (x1);
	
	\node (x2) at (4,0) [circle,fill,outer sep=5pt, scale=0.5] [label=above:$ 2b $]{};
	\node (y2) at (5.5,0) [circle,fill,outer sep=5pt, scale=0.5] [label=above:$ 0 $]{};
	\node (z2) at (4,-1) [circle,fill,outer sep=5pt, scale=0.5] [label=below:$ 2-2b $]{};
	\draw (x2) -- (y2);\draw (y2) -- (z2);\draw (z2) -- (x2);
	
	\node (x3) at (7,0) [circle,fill,outer sep=5pt, scale=0.5] [label=above:$ 2b $]{};
	\node (y3) at (8.5,0) [circle,fill,outer sep=5pt, scale=0.5] [label=above:$ 0 $]{};
	\node (z3) at (8.5,-1) [circle,fill,outer sep=5pt, scale=0.5] [label=below:$ -2b $]{};
	\node (w3) at (7,-1) [circle,fill,outer sep=5pt, scale=0.5] [label=below:$ 0 $]{};
	\draw (x3) -- (y3);\draw (y3) -- (z3);\draw (z3) -- (w3);\draw (w3) to (x3);
	\end{tikzpicture}
	\]	

	$\bullet$ Case $(D)$: $X=\mathbb{C}P^2 \# \overline{\mathbb{C}P^2}$, $c_1=f+2s$, where $f$ and $s$ are the fiber class and section class 
	with $f\cdot f=0$, $f\cdot s=1$ and $s\cdot s=1$.

	$(D1)$ $D$ is a torus.
	
	$(D2)$ $r(D)=2$,  and either 
	$([C_1],[C_2])=(af+s,(1-a)f+s)$ or $([C_1],[C_2])=(2s, f)$.
	
	$(D3)$  $r(D)=3$ and  $[C_1]=af+s, [C_2]=f, [C_3]=-af+s$.
	
	$(D4)$ $r(D)=4$ and  $[C_1]=af+s, [C_2]=f,  [C_3]=-(a+1)f+s, [C_4]=f$.
	
	The graphs in (D2), (D3) and (D4) are given respectively by
	\[
	\begin{tikzpicture}
	\node (x) at (-2,0) [circle,fill,outer sep=5pt, scale=0.5] [label=above:$ 2a+1 $]{};
	\node (y) at (-0.5,0) [circle,fill,outer sep=5pt, scale=0.5] [label=above:$ 3-2a $]{};
	\draw (x) to [bend right] (y);\draw (y) to [bend right] (x);
	
	\node (x1) at (1,0) [circle,fill,outer sep=5pt, scale=0.5] [label=above:$ 4 $]{};
	\node (y1) at (2.5,0) [circle,fill,outer sep=5pt, scale=0.5] [label=above:$ 0 $]{};
	\draw (x1) to[bend right] (y1);\draw (y1) to[bend right] (x1);
	
	\node (x2) at (4,0) [circle,fill,outer sep=5pt, scale=0.5] [label=above:$ 2a+1 $]{};
	\node (y2) at (5.5,0) [circle,fill,outer sep=5pt, scale=0.5] [label=above:$ 0 $]{};
	\node (z2) at (4,-1) [circle,fill,outer sep=5pt, scale=0.5] [label=below:$ 1-2a $]{};
	\draw (x2) -- (y2);\draw (y2) -- (z2);\draw (z2) -- (x2);
	
	\node (x3) at (7,0) [circle,fill,outer sep=5pt, scale=0.5] [label=above:$ 2a+1 $]{};
	\node (y3) at (8.5,0) [circle,fill,outer sep=5pt, scale=0.5] [label=above:$ 0 $]{};
	\node (z3) at (8.5,-1) [circle,fill,outer sep=5pt, scale=0.5] [label=below:$ -2a-1 $]{};
	\node (w3) at (7,-1) [circle,fill,outer sep=5pt, scale=0.5] [label=below:$ 0 $]{};
	\draw (x3) -- (y3);\draw (y3) -- (z3);\draw (z3) -- (w3);\draw (w3) to (x3);
	\end{tikzpicture}
	\]	
	%
	
\end{theorem}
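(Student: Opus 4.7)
The plan is to prove the classification in two stages: first enumerate the possible homological types of $(X,D)$, then establish symplectic deformation uniqueness within each type.

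When $D$ is a single smooth torus representing $-K_X$, I would argue directly. In each of $\CC\PP^2$, $S^2\times S^2$, and $\CC\PP^2\#\overline{\CC\PP}^2$ the symplectic cone is connected, each ambient manifold carries a K\"ahler anticanonical elliptic curve, and a parametric transversality argument applied to the moduli of smooth embedded $J$-holomorphic tori in class $-K_X$ then yields the connectedness needed for (B1), (C1), (D1).

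For cycles of spheres with $r(D)\ge 2$, I would combine sphere adjunction $[C_i]^2-c_1(X)\cdot [C_i]=-2$, the anticanonical constraint $\sum [C_i]=-K_X$, and positivity of symplectic area. In $\CC\PP^2$ with $c_1=3h$, adjunction forces each $[C_i]=d_ih$ with $d_i\in\{1,2\}$ and $\sum d_i=3$, giving only the configurations (B2) and (B3). In $S^2\times S^2$ with $c_1=2f_1+2f_2$ and $[C_i]=a_if_1+b_if_2$, adjunction becomes $(a_i-1)(b_i-1)=0$; combined with $\sum a_i=\sum b_i=2$ and the requirement that the intersection pattern form a single cycle, the enumeration produces exactly (C2), (C3), (C4). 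The Hirzebruch case is analogous in the basis $\{f,s\}$ with $f^2=0$, $f\cdot s=1$, $s^2=1$: adjunction restricts $[C_i]$ to one of $f$, $\alpha f+s$, or $2s$, and matching $\sum [C_i]=f+2s$ together with the cycle condition gives precisely (D2), (D3), (D4), parametrized by the integer $a$ (or $b$).

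Having pinned down the homological possibilities, I would establish symplectic deformation uniqueness within each fixed class by choosing a tame $J$ making $D$ holomorphic and invoking the deformation machinery of \cite{LiMa16-deformation}. In every non-torus case the cycle contains a symplectic sphere of non-negative self-intersection, which serves as a section or ruling; the McDuff structure theorem then supplies a canonical pseudoholomorphic fibration that identifies $(X,D)$ with the corresponding standard K\"ahler model, reducing the remaining ambiguity to connectedness of the relevant symplectic cone. The concrete K\"ahler realizations (lines and conics in $\CC\PP^2$; sections and fibers in $S^2\times S^2$ and $\CC\PP^2\#\overline{\CC\PP}^2$; the double-section-plus-fiber model in case (D2)) are then explicit.

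The principal obstacle is the deformation uniqueness step for configurations with $r(D)\ge 3$ whose components realize different homology classes: one must control all possible nodal degenerations of the divisor along a one-parameter family of tame almost complex structures, so that the moduli of symplectic log Calabi-Yau pairs with fixed underlying classes is connected. This is precisely the content of the deformation-theoretic framework for symplectic log Calabi-Yau pairs developed in \cite{LiMa16-deformation}, on which I would rely to close the argument.
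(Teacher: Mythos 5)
The paper does not actually prove this statement: it is recalled verbatim from \cite{LiMa16-deformation} and Theorem 2.4 of \cite{Fr}, so there is no in-paper argument to compare against. Your two-stage outline is nonetheless a faithful reconstruction of how the classification is established in those sources, and your homological enumeration checks out: in $\CC\PP^2$ adjunction gives $d^2-3d+2=0$, hence $d_i\in\{1,2\}$ and $\sum d_i=3$ yields exactly (B2), (B3); in $S^2\times S^2$ the identity $(a_i-1)(b_i-1)=0$ together with $\sum a_i=\sum b_i=2$, positivity of area, and the single-cycle condition yields (C2)--(C4); and in the Hirzebruch case the adjunction equation factors as $(\beta-1)(2\alpha+\beta-2)=0$, which with positivity of area leaves only $f$, $\alpha f+s$, and $2s$, giving (D2)--(D4). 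Two caveats. First, the bound $r(D)\le 4$ and the exclusion of spurious adjunction solutions (e.g.\ classes with negative area, or extra components forcing a zero class) are where positivity of area and of pairwise intersections must actually be invoked; you gesture at this but a complete write-up needs the case analysis. Second, and more substantively, your uniqueness stage --- connectedness of the space of symplectic log Calabi-Yau pairs with fixed homology data, controlling nodal degenerations along a path of tame $J$ --- is deferred to ``the deformation machinery of \cite{LiMa16-deformation},'' which is precisely one of the two references to which the theorem is attributed. So your proposal does not furnish an independent proof of the hard step; it reproduces the enumeration half and cites the source for the other half, which is essentially what the paper itself does. As an outline this is acceptable and correctly structured, but you should be explicit that the deformation-uniqueness input is being quoted rather than proved.
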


Since each pair in Theorem \ref{thm:minimal model} deforms to  a K\"ahler pair 
and blow-up/down can be performed in the K\"ahler category, these K\"ahler pairs blow-up to K\"ahler representatives in each symplectic deformation class. So we have the following lemma.
\begin{lemma}[\cite{LiMa19-survey}] \label{thm: symplectic deformation class=homology classes}
	Each symplectic deformation class  contains a K\"ahler pair. 
	

	
\end{lemma}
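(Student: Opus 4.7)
The plan is to reduce any symplectic log Calabi-Yau pair $(X,D,\omega)$ to a minimal model, invoke Theorem \ref{thm:minimal model} to obtain a K\"ahler representative in the minimal deformation class, and then reconstruct a K\"ahler pair in the original deformation class by reversing the blow-downs within the K\"ahler category.

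First I would recall from \cite{LiMa16-deformation} that every symplectic log Calabi-Yau pair $(X,D,\omega)$ is related, via a maximal sequence of non-toric symplectic blow-downs followed by a maximal sequence of toric symplectic blow-downs, to a pair $(\overline{X},\overline{D},\overline{\omega})$ whose ambient manifold is one of $\CC\PP^2$, $S^2\times S^2$, or $\CC\PP^2\#\overline{\CC\PP}^2$. Each such blow-down preserves the log Calabi-Yau condition and is a reversible operation on symplectic deformation classes. By Theorem \ref{thm:minimal model}, this minimal pair is symplectic deformation equivalent to an explicit K\"ahler pair $(\overline{X},\overline{D}',\overline{\omega}')$ listed there.

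Next I would lift each step of the reversed sequence to the K\"ahler category. The key observation is that a symplectic blow-up of a K\"ahler pair at a crossing point of the divisor (toric case) or at a smooth point of some component $C_i$ (non-toric case) can be performed K\"ahlerly by choosing an integrable complex structure compatible with the symplectic form near the blow-up center, and then applying the standard K\"ahler blow-up construction. The resulting pair is K\"ahler, remains log Calabi-Yau, and represents the same symplectic deformation class as the corresponding symplectic blow-up of $(\overline{X},\overline{D}',\overline{\omega}')$. Iterating along the reversed sequence yields a K\"ahler pair deformation equivalent to $(X,D,\omega)$.

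The main obstacle is ensuring the K\"ahler reconstruction lands in precisely the right symplectic deformation class, since a priori different choices of blow-up centers and exceptional areas might yield inequivalent pairs. This is handled by the fact that, for a fixed symplectic divisor inside a K\"ahler pair, the space of smooth points (resp.\ crossing points) together with admissible exceptional areas is connected, and hence all blow-ups parameterized by it lie in a single symplectic deformation class. Consequently, the K\"ahler choices made in each step of the lift can be adjusted to match those of the original symplectic blow-down sequence, delivering the desired K\"ahler representative in the symplectic deformation class of $(X,D,\omega)$.
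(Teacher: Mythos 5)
Your proposal follows essentially the same route as the paper: the paper's (very brief) justification is precisely that each minimal model in Theorem \ref{thm:minimal model} is realized by a K\"ahler pair and that toric and non-toric blow-ups can be performed in the K\"ahler category, so the K\"ahler minimal models blow up to K\"ahler representatives in every deformation class. Your additional connectivity argument for the space of blow-up centers and exceptional areas is a reasonable filling-in of a detail the paper leaves implicit, but it does not change the approach.
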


In the holomorphic category, We have the following finiteness of deformation classes in \cite{Fr}.
\begin{theorem}[Theorem 3.1 in \cite{Fr}] \label{thm: finite kahler deformation}
	There are only finitely many deformation types of anti-canonical pairs with the same self-intersection sequence. 
\end{theorem}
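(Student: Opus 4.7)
The plan is to reduce any anti-canonical pair with a prescribed self-intersection sequence to a finite amount of combinatorial data placed on top of one of the finitely many minimal models enumerated in Theorem \ref{thm:minimal model}.

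First I would use the combination of non-toric and toric blow-downs, as in the discussion preceding Theorem \ref{thm:minimal model}, to reduce any anti-canonical pair $(X,D)$ to a minimal anti-canonical pair $(\overline{X}, \overline{D})$ with $\overline{X}\in\{\mathbb{CP}^2, S^2\times S^2, \mathbb{CP}^2\#\overline{\mathbb{CP}}^2\}$. By Theorem \ref{thm:minimal model}, there are only finitely many deformation types of such minimal pairs. Next, the number $N$ of blow-ups needed to recover $(X,D)$ from $(\overline{X}, \overline{D})$ is determined purely by the self-intersection sequence of $D$: each blow-up changes $K^2$ by $-1$ and $r(D)$ and $\sum s_i$ in controlled ways, so the charge $q(D)=12-3r(D)-\sum s_i$ fixes $N$, and together with Noether's formula this fixes the topological type of $X$.

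The second step is to enumerate combinatorial patterns. Each of the $N$ blow-ups is either toric, inserting a $(-1)$-vertex at an edge of the current dual cycle and subtracting $1$ from each neighbor's self-intersection, or non-toric, attaching a new $(-1)$-vertex by an edge to a single existing vertex and subtracting $1$ from that vertex's self-intersection. Fixing the target self-intersection sequence and the starting minimal model, the data ``at each step, which component(s) are affected'' becomes a finite discrete choice, since the number of components is bounded by $r(D)+N$ and each step has at most $r(D)+N$ options. Thus only finitely many combinatorial patterns of blow-ups lead from a given minimal pair to the prescribed sequence.

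For the final step, I would argue that once the minimal model and the combinatorial pattern are fixed, the remaining choice is a position on $\mathbb{CP}^1$ for each non-toric blow-up center, avoiding nodes and previously chosen centers. This parameter space is a Zariski-open subset of a product of copies of $\mathbb{CP}^1$ (iterated over the proper transforms), and hence connected. A path in this space yields a smooth holomorphic family of pairs $(X_t,D_t)$ via simultaneous blow-ups in families, and by Lemma \ref{thm: symplectic deformation class=homology classes} (and standard deformation theory of rational surfaces together with Kodaira's stability of K\"ahler forms in families) any two pairs in the family are deformation equivalent. Combining the three finitenesses — minimal models, combinatorial patterns, and connectedness of point configurations — yields the claimed finiteness.

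The main obstacle is the last step: one must verify that moving non-toric blow-up centers along a component really produces a K\"ahler (not merely smooth) deformation of pairs, which requires being careful with the simultaneous resolution/blow-up construction for families and with the requirement that the proper transforms of $\overline{D}$ remain divisors of the correct type throughout. Once this is set up, the counting argument is essentially bookkeeping.
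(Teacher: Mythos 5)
The paper does not prove this statement: it is imported verbatim as Theorem~3.1 of \cite{Fr}, so there is no in-paper argument to compare against. Your sketch is essentially the standard proof (and is close in spirit to Friedman's own, which reduces to finitely many minimal/toric models, finitely many combinatorial blow-up patterns, and connectedness of the configuration space of interior blow-up centers), and I find no fatal gap in it. Two points deserve to be made explicit. First, your enumeration of blow-ups as ``toric at a node'' or ``non-toric at a smooth point of the current divisor'' silently uses that every exceptional curve $E$ in an anti-canonical pair satisfies $E\cdot D=-K\cdot E=1$, so no blow-down can be centered away from $D$ and every intermediate pair is again anti-canonical; without this the combinatorial bookkeeping would not exhaust all cases. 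Second, in the connectedness step the parameter space for a non-toric center at stage $j$ is the chosen component of the stage-$(j-1)$ divisor minus its nodes; this correctly includes the points lying on earlier exceptional curves (i.e.\ infinitely near centers), and since each such space is an irreducible curve minus finitely many points, the iterated total space is connected and the simultaneous-blow-up family is a flat family of K\"ahler pairs (automatic for rational surfaces), so all fibers are deformation equivalent. With those two remarks filled in, the counting is, as you say, bookkeeping: finitely many minimal models compatible with the given sequence (the integer parameters $a,b$ in Theorem~\ref{thm:minimal model} are constrained by the bounded number of blow-downs), finitely many patterns, one deformation type per choice.
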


Combining Lemma \ref{thm: symplectic deformation class=homology classes} and Theorem \ref{thm: finite kahler deformation}, we obtain the following finiteness of symplectic deformation classes.
\begin{cor}[\cite{LiMa19-survey}] \label{cor: finite deformation} 
	There are only finitely many symplectic deformation types of symplectic log Calabi-Yau  pairs with the same self-intersection sequence.
\end{cor}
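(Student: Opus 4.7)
The plan is to reduce the statement to the K\"ahler finiteness result (Theorem \ref{thm: finite kahler deformation}) using the fact that every symplectic deformation class contains a K\"ahler representative (Lemma \ref{thm: symplectic deformation class=homology classes}). Concretely, fix a self-intersection sequence $(s_1,\dots,s_r)$ and let $\mathcal{S}$ denote the set of symplectic deformation types of symplectic log Calabi-Yau pairs realizing this sequence. I would define a map $\Phi$ from the set $\mathcal{K}$ of holomorphic deformation types of anti-canonical pairs with sequence $(s_1,\dots,s_r)$ to $\mathcal{S}$ by sending a K\"ahler pair to its underlying symplectic pair, and show that $\Phi$ is surjective.

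Surjectivity of $\Phi$ is precisely the content of Lemma \ref{thm: symplectic deformation class=homology classes}: given any symplectic log Calabi-Yau pair $(X,D,\omega)$ with sequence $(s_1,\dots,s_r)$, we can find a K\"ahler pair $(X',D',\omega')$ in the same symplectic deformation class, and $(X',D')$ automatically has the same self-intersection sequence since the components of $D'$ are related to those of $D$ by an orientation-preserving diffeomorphism followed by a continuous family of symplectic perturbations. Hence the image of $\Phi$ covers all of $\mathcal{S}$. By Theorem \ref{thm: finite kahler deformation}, the domain $\mathcal{K}$ is finite, so $\mathcal{S}$ is finite.

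The only thing that requires a small justification is that $\Phi$ is well-defined, i.e. that K\"ahler deformation equivalence implies symplectic deformation equivalence. This is standard: a holomorphic family of anti-canonical pairs $(X_t,D_t)$ over a connected base, equipped with K\"ahler forms $\omega_t$ varying continuously in $t$, produces by Ehresmann/Moser-type arguments a one-parameter family of symplectic divisors in a fixed smooth $4$-manifold, hence a symplectic homotopy of symplectic divisors in the sense of the definition preceding Theorem \ref{thm:minimal model}. The result then follows by combining surjectivity of $\Phi$ with the finiteness of $\mathcal{K}$.

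I do not anticipate a real obstacle here; the corollary is essentially a formal consequence of the two cited results, and the only mild point is checking that the notion of deformation in the K\"ahler category refines the notion of symplectic deformation equivalence used in this paper.
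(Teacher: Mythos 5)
Your proposal is correct and follows exactly the paper's route: the paper derives the corollary in one line by combining Lemma \ref{thm: symplectic deformation class=homology classes} (every symplectic deformation class contains a K\"ahler pair) with Theorem \ref{thm: finite kahler deformation} (finiteness of holomorphic deformation types). Your extra remark that K\"ahler deformation equivalence must refine symplectic deformation equivalence is a reasonable point the paper leaves implicit, but it does not change the argument.
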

%

\subsection{Pseudo-holomorphic curves in dimension 4}\label{section:maximal surface}
In this subsection, we collect some useful notions in the theory pseudo-holomorphic curves in dimension 4 (\cite{McOp13-nongeneric}). These will be used frequently in the rest of the paper to study the minimality of divisor complements.


\begin{lemma}\label{lemma:maximal=minimal complement}
	Suppose $F\subset (X,\omega)$ is an embedded symplectic surface without sphere components. Then $X-F$ is minimal if and only if $ [F]\cdot e\neq 0 $ for any exceptional class $ e $.
\end{lemma}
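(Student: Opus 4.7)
The forward direction ($\Leftarrow$) is immediate from definitions: if $X\setminus F$ is not minimal, it contains a symplectic exceptional sphere $E$, which, being disjoint from $F$, has class $[E]$ an exceptional class with $[E]\cdot[F]=0$.

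For the converse, suppose $e$ is an exceptional class with $[F]\cdot e=0$; the plan is to build a symplectic exceptional sphere disjoint from $F$ via pseudo-holomorphic techniques. Choose an $\omega$-tame almost complex structure $J$ making $F$ $J$-holomorphic (possible since $F$ is a symplectic submanifold). By Taubes--McDuff, the exceptional class $e$ carries a $J$-holomorphic stable map whose image decomposes into irreducible rational components $C_1,\dots,C_k$ with positive multiplicities $m_1,\dots,m_k$ satisfying $\sum m_i[C_i]=e$. Since $F$ has no sphere components by hypothesis, no $C_i$ can be contained in $F$, so positivity of intersection yields $[F]\cdot[C_i]\ge 0$ for all $i$. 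The relation
\[
0 \;=\; [F]\cdot e \;=\; \sum_i m_i\,[F]\cdot[C_i]
\]
then forces $[F]\cdot[C_i]=0$ for every $i$, and positivity of intersection promotes this to $C_i\cap F=\emptyset$, so the entire stable map lies in $X\setminus F$.

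The final step is to extract an embedded $(-1)$-sphere from this configuration. Adjunction on each rational $C_i$ gives $c_1\cdot[C_i]\ge 2+[C_i]^2$ with equality iff $C_i$ is embedded; combined with the numerical data $c_1\cdot e=1$, $e^2=-1$, and the positivity pairings $[C_i]\cdot[C_j]\ge 0$ for $i\ne j$, a direct bookkeeping shows that at least one component $C_i$ is an embedded $(-1)$-sphere, i.e. a symplectic exceptional sphere; since it is disjoint from $F$, the complement $X\setminus F$ is not minimal. The main technical step is this last extraction, which parallels the well-known fact that in a tame almost complex $4$-manifold every exceptional class is represented by an embedded $J$-holomorphic sphere, so I would invoke that standard argument rather than reprove it.
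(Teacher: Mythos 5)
The overall strategy is the same as the paper's: realize the exceptional class by a $J$-holomorphic object for a $J$ adapted to $F$, and conclude by positivity of intersections. The paper simply quotes Proposition~4.1 of Ohta--Ono (\cite{OhtOn05-cusp}), which asserts that for any exceptional class $e$ there is a tame $J$ making \emph{both} $F$ and an \emph{embedded} representative $S$ of $e$ holomorphic; disjointness then follows from $[F]\cdot e=0$. Your forward direction and your observation that the no-sphere-component hypothesis forces every stable-map component off of $F$ are both correct and are exactly where that hypothesis enters.

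The gap is in the final extraction step. For an \emph{arbitrary} tame $J$ making $F$ holomorphic, the stable map representing $e$ need not have an embedded $(-1)$-sphere among its components, and the ``direct bookkeeping'' you invoke does not close. The numerical constraints you list only give $\sum_i m_i\,c_1\cdot[C_i]=1$ and $\sum_i m_i^2[C_i]^2+2\sum_{i<j}m_im_j[C_i]\cdot[C_j]=-1$; without a lower bound on each $c_1\cdot[C_i]$ (components can be multiply covered, non-embedded, or have $c_1\cdot[C_i]\le 0$, e.g.\ $(-2)$-spheres), nothing forces a component with $[C_i]^2=-1$ and $c_1\cdot[C_i]=1$. (Note also that adjunction for an irreducible rational $J$-curve reads $c_1\cdot[C_i]\le [C_i]^2+2$, with equality iff embedded --- you have the inequality reversed.) The ``well-known fact'' you cite holds for \emph{generic} $J$, not for every tame $J$, and genericity is precisely what you give up when you constrain $J$ to make $F$ holomorphic. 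The repair is to choose $J$ generic \emph{within} the space $\mathcal{J}(F)$ of tame structures preserving $F$: since no component can lie in $F$, every component is somewhere injective away from $F$ and hence regular, giving $c_1\cdot[C_i]\ge 1$; then $1=c_1\cdot e=\sum m_i\,c_1\cdot[C_i]$ forces a single simply covered component, which is embedded by adjunction. This relative-genericity argument is exactly the content of the Ohta--Ono proposition the paper cites (compare also Lemma~\ref{lem: existence of nice J-sphere}), so your proof needs either that citation or this genericity argument to be complete.
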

\begin{proof}
	By Proposition 4.1 of \cite{OhtOn05-cusp}, for any exceptional class $e$, there exists an almost complex structure $J$ such that both $F$ and an embedded representative $S$ of $e$ are $J$-holomorphic. By positivity of intersection, we have $ [F]\cdot e\ge 0 $. In particular, $ [F]\cdot e=0 $ if and only if $ F $ and $ S $ are disjoint.
\end{proof}

Similar to Proposition 4.1 of \cite{OhtOn05-cusp}, McDuff and Opshtein gave a criterion on the existence of embedded pseudo-holomorphic curves relative to a pseudo-holomorphic normal crossing divisor. 
\begin{definition}
	Let $ D=\cup C_i $ be an $ \omega- $orthogonal symplectic divisor in $ (X,\omega ) $. An exceptional class $ e\in H_2(X;\ZZ ) $ is called \textbf{$ D- $good} if $ e\cdot [C_i]\ge 0 $ for all $ i $.
\end{definition}
\begin{lemma}[Theorem 1.2.7 of \cite{McOp13-nongeneric}]\label{lem: existence of nice J-sphere}
	Let $D$ be an $\omega$-orthogonal symplectic divisor.
	There is a non-empty space $\mathcal{J}(D)$ of $\omega$-tamed almost complex structures making $D$ pseudo-holomorphic such that
	for any $ D- $good exceptional class $e$, there is a residual subset $\mathcal{J}(D,e) \subset \mathcal{J}(D)$
	so that $e$ has an embedded $J$-holomorphic representative for all $J\in \mathcal{J}(D,e)$. 
\end{lemma}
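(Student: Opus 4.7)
The plan is to first construct the space $\mathcal{J}(D)$ and then produce embedded representatives of $D$-good exceptional classes via a deformation-and-compactness argument, upgraded to a residual set of $J$'s by parametric transversality.

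I would construct $\mathcal{J}(D)$ as follows. Because $D$ is $\omega$-orthogonal, near each double point of $D$ one can pick a Darboux chart in which the two intersecting components look like complex coordinate planes, and a neighbourhood of each smooth point of $C_i$ admits a symplectic tubular-neighbourhood model with standard integrable structure. Using convexity of the space of $\omega$-tamed almost complex structures at each point, partition-of-unity gluing produces an $\omega$-tamed $J$ preserving every $C_i$, so $\mathcal{J}(D)\neq\emptyset$; the same patch-and-interpolate argument shows it is in fact contractible.

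Next, for a $D$-good exceptional class $e$, I would produce a non-empty moduli space by deformation. Choose a generic $\omega$-tamed $J_1$ (without the divisor constraint); by the Taubes SW$=$Gr theorem, $e$ has an embedded $J_1$-holomorphic sphere representative. Pick any $J_0\in\mathcal{J}(D)$, connect $J_0$ to $J_1$ through a path of $\omega$-tamed structures, and apply Gromov compactness to extract a stable $J_0$-holomorphic limit in class $e$. The $D$-good hypothesis, combined with positivity of intersection, constrains the bubbling: if the limit contained a component lying inside some $C_j$, then the residual bubble classes would have intersection numbers with $D$ incompatible with $e\cdot [C_k]\geq 0$ for every $k$. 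Once components in $D$ are excluded, each remaining bubble class has non-negative intersection with every $[C_i]$; since $c_1(e)=1$, the adjunction formula for $J$-holomorphic spheres in a $4$-manifold forces all but one summand to be trivial, and the surviving component to be embedded.

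The main obstacle, and the remaining step, is upgrading existence at a single $J_0$ to a residual subset. I would work with the universal moduli space $\mathcal{U}(e,\mathcal{J}(D))$ of pairs $(u,J)$ with $J\in\mathcal{J}(D)$ and $u$ a somewhere injective $J$-holomorphic sphere in class $e$. At any $(u,J)$ with $u$ not contained in $D$, the injective points of $u$ lie in the open set $X\setminus D$, where perturbations of $J$ tangent to $\mathcal{J}(D)$ are unrestricted; the standard surjectivity argument for the universal linearised operator then shows $\mathcal{U}(e,\mathcal{J}(D))$ is a Banach manifold. Its projection to $\mathcal{J}(D)$ is Fredholm of index $2c_1(e)-2=0$, so Sard--Smale yields a residual set $\mathcal{J}(D,e)$ of regular values; adjunction in dimension $4$ forces every curve in the fibre to be embedded. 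The trickiest input is the bubbling exclusion used in the previous paragraph, and it is precisely there that the $D$-good hypothesis is indispensable.
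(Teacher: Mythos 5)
The paper does not actually prove this lemma: it is imported verbatim as Theorem~1.2.7 of McDuff--Opshtein \cite{McOp13-nongeneric}, so the only meaningful comparison is with their proof. Your skeleton --- build $\mathcal{J}(D)$ from local models near the $\omega$-orthogonal double points and convexity of the tamed condition; produce a stable $J_0$-holomorphic limit in class $e$ by deforming from a generic $J_1$ where Taubes' SW$=$Gr gives an embedded representative; upgrade to a residual subset by Sard--Smale on the universal moduli space of somewhere-injective curves with injective points off $D$; conclude embeddedness by adjunction --- is the right general shape, and the first and last steps are sound.

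The gap sits exactly where you say the $D$-good hypothesis is ``indispensable'': excluding limit components contained in $D$. The mechanism you assert --- that such components ``would have intersection numbers with $D$ incompatible with $e\cdot[C_k]\ge 0$'' --- is false in general. Suppose $D$ contains a sphere $C$ with $[C]^2=-2$ and $e\cdot[C]=0$; the decomposition $e=[C]+B$ with $B=e-[C]$ has $B\cdot[C]=2\ge 0$, so positivity of intersection together with $D$-goodness produces no contradiction at the level of intersection numbers. What rules such configurations out is a finer index/adjunction analysis: components lying in $D$ are exempt from genericity (they are $J$-holomorphic for \emph{every} $J\in\mathcal{J}(D)$), can have $c_1\le 0$ (here $c_1([C])=0$), and hence break your count ``$c_1(e)=1$ forces all but one summand to be trivial,'' which requires $c_1\ge 1$ on each somewhere-injective component and is available only for components in the complement of $D$, and only after genericity there has been arranged. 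Controlling these non-generic components is the actual content of McDuff--Opshtein's theorem (their structure results for nodal representatives relative to $\mathcal{J}(D)$), and it is missing from your argument. A related, smaller point: Sard--Smale only regularizes curves not contained in $D$, so even on your residual set you must still show $e$ is represented by such a curve rather than only by a cusp curve with components in $D$ --- which is the same missing exclusion again.
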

So for a $ D $-good exceptional class $ e $, we have $ e\cdot [D]\ge 0 $. In particular, $ e $ is $ D-$good if $ e\cdot [C_i]=0 $ for every component $ C_i $ of $ D $. So an exceptional curve $ E $ is disjoint from $ D $ if and only if $ [E]\cdot [C_i]=0 $ for all $ i $.

We also recall that a homology class $ b\in H_2(X;\ZZ ) $ is said to be stable if $ b $ for any $\omega$-tame almost complex structure $ J$, it can be represented by a $ J $-holomorphic curve. In particular, by \cite{Mc90-structure} we see that any exceptional class $ e $ is stable. Then any symplectic surface class of non-negative self-intersection pairs non-negatively with a stable class.

\subsection{Symplectically embeddable $D$ is rationally embeddable}\label{subsection:symp embed = rational embed}
Actually we only need to consider circular spherical divisors with $ b^+= 1 $ in a symplectic rational surface by the following lemma.
\begin{lemma}\label{lemma:rational-embed}
	Let $ D $ be a symplectic circular spherical divisor in $ (X,\omega ) $ with $ b^+(Q_D)= 1 $, then $ (X,\omega ) $ is rational.
\end{lemma}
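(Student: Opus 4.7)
The plan is to exhibit in $X$ (or a symplectic blowdown of $X$) a symplectic sphere class of positive self-intersection, apply McDuff's structure theorem to reduce to the rational-or-ruled alternative, and use a $\pi_2$-based argument to rule out the irrational-ruled case.

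First, I would toric blow down $D \subset (X,\omega)$ to $\overline D \subset (\overline X, \overline\omega)$ with $\overline D$ either toric minimal or of the form $(-1,p)$ (Lemma \ref{lem: not negative semi-definite => at least one non-negative}); this preserves rationality of the ambient manifold and $b^+(Q_D) = 1$ by Lemmas \ref{lemma:toric eq topological} and \ref{lemma:toric eq preserves}. In the toric minimal case, Lemma \ref{lem: not negative semi-definite => at least one non-negative} furnishes a component $C_i$ with $[C_i]^2 \geq 0$, so $\overline X$ contains an embedded symplectic sphere of non-negative self-intersection. In the case $\overline D = (-1,p)$, the intersection matrix $\bigl(\begin{smallmatrix}-1 & 2 \\ 2 & p\end{smallmatrix}\bigr)$ has $b^+ = 1$ iff $p \geq -3$; symplectically blowing down the exceptional sphere $C_1$ then produces a manifold $\overline X'$ whose $H_2$ contains the class $\alpha = [C_2] + 2[C_1]$ of positive self-intersection $p+4 \geq 1$, represented by the nodal image of $C_2$ as a $J$-holomorphic sphere. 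Rationality of $\overline X'$ is equivalent to that of $X$.

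By McDuff's structure theorem \cite{Mc90-structure} (or its extension to singular $J$-holomorphic spheres via Gromov compactness), the relevant manifold is rational or a blowup of an $S^2$-bundle over a Riemann surface. Suppose for contradiction it is irrational ruled, namely a blowup of an $S^2$-bundle over $\Sigma_g$ with $g \geq 1$. Since the universal cover of $\Sigma_g$ is contractible, that of the bundle is homotopy equivalent to $S^2$, and so the Hurewicz image of $\pi_2$ in $H_2$ of the blowup is generated by the fiber class $F$ and the exceptional classes $E_i$. Any class represented by a map $S^2 \to X$ (smooth, immersed, or nodal) lies in $\mathrm{span}(F, E_i)$, on which the intersection form is $\mathrm{diag}(0,-1,\ldots,-1)$ with $b^+ = 0$; in particular every such class has non-positive self-intersection.

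This yields a contradiction in each subcase. If some toric-minimal component satisfies $[C_i]^2 > 0$, then $[C_i] \in \mathrm{span}(F, E_j)$ would force $[C_i]^2 \leq 0$, a contradiction. If $[C_i]^2 = 0$, adjunction pins $[C_i] = F$, but then its neighbor $C_{i\pm1}$ in the cycle satisfies $[C_i] \cdot [C_{i\pm1}] \geq 1$ while $F$ pairs trivially with any element of $\mathrm{span}(F, E_j)$. In the $(-1,p)$ subcase, the positive-square class $\alpha = [C_2]+2[C_1]$ directly contradicts non-positivity. Hence $X$ is rational. The main obstacle in this plan is the $(-1,p)$ subcase: with no embedded sphere of non-negative self-intersection available, handling it requires the immersed-sphere extension of McDuff's theorem applied to the nodal sphere produced by collapsing $C_1$.
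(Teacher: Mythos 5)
Your proposal is correct, and its opening reduction (toric blow-down to a toric minimal divisor or to $(-1,p)$ via Lemmas \ref{lemma:toric eq topological} and \ref{lem: not negative semi-definite => at least one non-negative}, then exhibiting a sphere class of non-negative square) coincides with the paper's; the genuine divergence is in how the irrational ruled alternative is excluded. The paper argues, for a toric minimal $D$ with a square-zero component, that this component must be a fiber, so its neighbour projects with positive degree onto the base and would have positive genus; and for $(-1,\epsilon-2)$ it combines Biran's classification of exceptional classes with the standard form of embedded sphere classes in irrational ruled surfaces to contradict $[C_1]\cdot[C_2]=2$. You replace both with one uniform $\pi_2$/Hurewicz observation: in a blow-up of an $S^2$-bundle over $\Sigma_g$ with $g\ge 1$, every spherical class lies in the span of $F$ and the $E_i$, on which the intersection form is $\mathrm{diag}(0,-1,\dots,-1)$ and $F$ pairs trivially with everything, so a positive-square spherical class (or a square-zero class meeting its neighbour) is impossible. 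This is more elementary, avoids the genus and lattice arguments, and handles all subcases at once; what it costs you is that the one genuinely delicate input is now isolated in the rational-or-ruled step for $(-1,\epsilon-2)$, where no embedded sphere of non-negative square exists. Your citation there, ``McDuff's structure theorem extended to singular $J$-holomorphic spheres via Gromov compactness,'' is not quite a mechanism: Gromov compactness does not by itself upgrade the embedded-sphere structure theorem to nodal spheres. The statement you need --- a closed symplectic 4-manifold containing a positively immersed symplectic sphere of positive self-intersection is rational or ruled --- is true, but the cleanest route is exactly the paper's: symplectically smooth the node of the image of $C_2$ to obtain an embedded symplectic torus of square $p+4\ge 1$ and invoke Proposition 4.3 of \cite{LiMaYa14-CYcap}. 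With that one citation repaired (or replaced by the smoothing trick), your argument is complete, and your $\pi_2$ exclusion of the irrational ruled case still applies verbatim to the resulting class $\alpha=[C_2]+2[C_1]$ since it remains spherical.
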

\begin{proof}
	Since being a symplectic rational surface is preserved under blow-up and blow-down, we could assume that $ D $ is toric minimal or of the form $ (-1,p) ,p>-4$. If $ s_i\ge 1 $ for some $ i $, then $ (X,\omega ) $ is rational as it contains a positive symplectic sphere (\cite{Mc90-structure}).
	
	Now we assume $ D $ is toric minimal and $ s_i\le 0 $ for all $ i $. By Lemma \ref{lem: not negative semi-definite => at least one non-negative}, we must have $ s_i=0 $ for some $ i $ in order for $ b^+(Q_D)\ge 1 $ and thus $ (X,\omega ) $ must rational or ruled. Without loss of generality, we assume $ s_1=0 $. If $ X $ is irrational ruled with $ \pi:X\to B $, then $ [C_1] $ must be the fiber class. So $ [C_2] $ must contain a positive multiple of the section class as $ [C_1]\cdot [C_2]\ge 1 $. Then $ \pi|_{C_2}:C_2\to B $ has positive degree so that $ g(C_2)\ge g(B)\ge 1 $, which is contradiction.
	
	Suppose $ D $ is of the form $ (-1,p) ,p>-4$. When $ p\ge 0 $, it follows from the same argument as above. The case $ (-1,\epsilon-2) $, $ \epsilon=-1,0,1 $, needs a different argument. Let $ D=C_1\cup C_2 $ with $ [C_1]^2=-1 $, $ [C_2]^2=\epsilon-2 $ and $ [C_1]\cdot [C_2]=2 $. Blow down $ C_1 $ to get $ X' $ such that $ X=X'\# \overline{\CC\PP}^2 $ and $ C_2 $ becomes an immersed nodal symplectic sphere $ C_2' $ with self-intersection $ 2+\epsilon $. Smoothing the singularity of $ C_2' $ we obtain a smoothly embedded symplectic torus $ T $ with self-intersection $ 2+\epsilon\ge 1 $. By Proposition 4.3 of \cite{LiMaYa14-CYcap}, $ (X,\omega ) $ is rational or ruled. Suppose $ X $ is irrational ruled, then $ H_2(X;\ZZ) $ is generated by $ \{ f,s,e_1,\dots,e_k  \} $ where $ f $ is the class of a fiber, $ s $ is the class of a section and $ e_i $'s are exceptional classes. 
	As an exceptional class, $ [C_1] $ must be of the form $ e_i $ or $ f-e_i $ by Corollary 5.C in \cite{Biran-packing}.
	Since $ C_2 $ is an embedded symplectic sphere, by Lemma 6.1 of \cite{seppi-li-wu-stability}, we have $ [C_2]=bf+\sum \pm e_i $ and thus $ [C_1]\cdot [C_2]=\pm 1 $, which is a contradiction.
	
\end{proof}

\subsection{Classify rationally embeddable $D$}\label{subsection:rationally embeddable}
In this section, we derive some restrictions on a symplectic circular spherical divisor embedded in a symplectic rational surface. In particular, we give a complete list of rationally embeddable circular spherical divisors up to toric equivalence in Proposition \ref{prop:list}.

By smoothing a symplectic circular spherical divisor $ D $ to a symplectic torus $ T $ with $ [T]=[D] $ and applying Theorem 6.10 in \cite{OhOn03-simple-elliptic}, we get the following upper bounded on $ [D]^2 $.
\begin{lemma}[\cite{OhOn03-simple-elliptic}]\label{lemma:constraint on D^2}
	For a symplectic circular spherical divisor $D$ in a symplectic rational surface $(X, \omega)$ we have that $[D]^2\leq 9$. 
\end{lemma}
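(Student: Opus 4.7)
The plan is to reduce the statement to the Ohta--Ono bound on self-intersections of symplectically embedded tori in rational surfaces, exactly as indicated by the preamble of the lemma. The key reduction is to smooth all the transverse nodal crossings of $D$ simultaneously into a single symplectically embedded torus lying in the same homology class.

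First, I would perform the standard symplectic smoothing at each of the $r$ transverse intersection points of $D = C_1 \cup \cdots \cup C_r$. Since every intersection is positive and transverse, a standard local model replaces two symplectic disks crossing at a point with an embedded symplectic annulus; this can be carried out inside a small Darboux neighborhood of each node without altering $D$ elsewhere. Applying this at all $r$ nodes of the cycle simultaneously produces a closed, connected, smoothly embedded symplectic surface $T \subset (X,\omega)$.

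Next, I would verify that $T$ is a torus and that $[T] = [D]$ in $H_2(X;\ZZ)$. Topologically, a cycle of $r$ spheres meeting in $r$ nodes has Euler characteristic $r$, and each local smoothing decreases the Euler characteristic by $1$, so $\chi(T) = 0$; connectedness and orientability then force $T$ to be a torus. Because the smoothing is supported in disjoint small neighborhoods of the nodes, $T$ can be exhibited as a small $C^0$-perturbation of the cycle $\bigcup C_i$ (e.g.\ as the preimage of a nearby regular value in the local normal crossing model), so $[T] = \sum_i [C_i] = [D]$.

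Finally, I would invoke Theorem 6.10 of \cite{OhOn03-simple-elliptic}, which asserts that every symplectically embedded torus in a symplectic rational surface has self-intersection at most $9$. Combined with $[T]^2 = [D]^2$, this yields $[D]^2 \leq 9$. The only nontrivial input is the Ohta--Ono bound; the smoothing step is a standard, purely local construction for positively transverse symplectic crossings, so there is no substantive obstacle to executing the plan.
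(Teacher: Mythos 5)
Your proposal is correct and is essentially identical to the paper's argument: the lemma is justified there by symplectically smoothing $D$ to an embedded symplectic torus $T$ with $[T]=[D]$ and then invoking Theorem 6.10 of \cite{OhOn03-simple-elliptic}. Your additional details (the local smoothing model, the Euler characteristic count $\chi(T)=r-r=0$, and the preservation of the homology class) correctly fill in the steps the paper leaves implicit.
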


This upper bound becomes an upper bound on $ r(D) $ when $ s_i\ge -1 $ for all $ i $. Combined with the homological classification in Lemma \ref{lemma:topological cyclic}, we can determine exactly when such $D$ is rationally embeddable.
\begin{lemma}\label{lemma:s_i>=-1 embeddable}
	Let $ D $ be a circular spherical divisor with $ s_i\ge -1 $ for all $ i $. Then it is rationally embeddable if and only if it is toric equivalent to $ (1,1,1) $ or $ (-1,k), -1\le k\le 5 $ or $ (1,k), 0\le  k\le 4 $ or one in the family $ \mc{F}(2,2) $ of Lemma \ref{lemma:topological cyclic} (6).
\end{lemma}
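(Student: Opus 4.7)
The plan is to prove both directions of the equivalence, with the bulk of the work in the $(\Rightarrow)$ direction.

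For $(\Leftarrow)$: since toric equivalence preserves rational embeddability (Lemma \ref{lemma:toric eq preserves}), it is enough to exhibit an explicit rational embedding for each listed divisor. The cycles $(1,1,1)$ and $(1,4)$ are anti-canonical in $\mathbb{CP}^2$ by Theorem \ref{thm:minimal model} cases (B3) and (B2). The remaining members of $\{(1,k) : 0 \le k \le 3\}$, $\{(-1,k) : -1 \le k \le 5\}$, and $\mathcal{F}(2,2)$ can be built from these (or from cases (C2) and (D2) of Theorem \ref{thm:minimal model} when an extra factor of $f_1$ or $f$ is needed) by performing non-toric blow-ups on one component to decrease its self-intersection, sometimes followed by a toric blow-up and blow-down to adjust the cycle length.

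For $(\Rightarrow)$: let $D$ be rationally embeddable with $s_i \ge -1$ for all $i$. Lemma \ref{lemma:b^+ leq 1} forces $b^+(D) \le 1$, and Lemma \ref{lemma:constraint on D^2} gives $[D]^2 \le 9$. Combining the formula $[D]^2 = \sum s_i + 2r$ for $r \ge 3$ (respectively $s_1 + s_2 + 4$ for $r = 2$) with the hypothesis $s_i \ge -1$ gives the coarse bound $r \le 9$ and the finer bound $\sum s_i \le 9 - 2r$. I would then split into cases on the pair $(r(D), r^{\ge 0}(D))$, invoking Lemma \ref{lemma:topological cyclic} to enumerate the topologically possible $D$'s inside a closed four-manifold with $b^+ = 1$. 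For each case produced by that lemma, I would verify that $D$ is toric equivalent to a member of the stated list.

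The main obstacle is the last verification step, which requires showing that all a priori distinct topological types from Lemma \ref{lemma:topological cyclic} collapse onto the short list via toric moves. The key tool is the balancing move of Example \ref{eg: balancing self-intersection by $0$-sphere}: a subsequence $(\ldots, a, 0, b, \ldots)$ is toric equivalent to $(\ldots, a+n, 0, b-n, \ldots)$ for any $n$, which creates $-1$ entries that can then be toric blown down. For instance, one checks that $(k, 0, 0)$ with $k \le 2$ from Lemma \ref{lemma:topological cyclic}(4)(iii) is toric equivalent to $(1, k+2)$; the families $(k, 0, p)$ of Lemma \ref{lemma:topological cyclic}(5)(ii) collapse to length two since $s_i \ge -1$ forces $p \in \{-1, 0\}$; and the length-four families of (2) and (3) likewise reduce either to a member of $\mathcal{F}(2,2)$ or to some $(1, k)$. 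The hypothesis $s_i \ge -1$ is crucial: it prevents long chains of $(-2)$-spheres from appearing among the toric-minimal representatives, so each reduction terminates in a bounded number of steps at a divisor on the stated list.
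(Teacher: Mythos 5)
Your overall strategy coincides with the paper's: bound $[D]^2$ by Lemma \ref{lemma:constraint on D^2}, enumerate the homological possibilities with Lemma \ref{lemma:topological cyclic}, and collapse them onto the stated list with balancing moves and toric blow-downs, citing Theorem \ref{thm:minimal model} for the converse. (The paper streamlines the $r\ge 3$ analysis by first passing to a toric minimal representative, so that $s_i\ge 0$ and hence $[D]^2\ge 2r$ forces $r\le 4$, rather than working with your coarser bound $r\le 9$; this is a matter of efficiency, not correctness.)

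There is, however, a genuine gap in the one family your sketch never addresses: the length-two divisors $(k,-1)$ with $k\ge 0$ coming from case (7) of Lemma \ref{lemma:topological cyclic}, which must land in the portion $(-1,k)$, $-1\le k\le 5$, of the list. Here $[D]^2=k+3$, so the bound $[D]^2\le 9$ only gives $k\le 6$, and the divisor $(6,-1)$ passes every filter you set up: it has $b^+(Q_D)=1$ and $[D]^2=9$, no toric blow-down or balancing move is available at length $2$, and it is not toric equivalent to anything on the list --- its charge is $q=1$, and the only list members with charge $1$ are $(1,4)$ and $(4,1)$, whose intersection forms are degenerate, whereas $Q_{(6,-1)}$ is nondegenerate; both $q(D)$ and $b^0(Q_D)$ are toric-equivalence invariants. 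So your argument as written would wrongly admit $(6,-1)$ as rationally embeddable. The paper excludes it by the sharper estimate $[D]^2\le 8$, valid here because the $(-1)$-component of $D$ is a symplectic exceptional sphere, so $X$ is non-minimal and in particular not $\CC\PP^2$; this refinement of Lemma \ref{lemma:constraint on D^2} is exactly the ingredient your proposal is missing, and some such additional argument is unavoidable.
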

\begin{proof}
	
	Let $ D=(s_1,\dots,s_r) $ be a circular spherical divisor. In the case $ r\ge 3 $, we might assume $ D $ is toric minimal and $ s_i\ge 0 $. Suppose $ D $ is embedded in symplectic rational surface $ (X,\omega) $, then $ 9\ge [D]^2=\sum_{i=1}^r (s_i+2)\ge 2r $, thus $ r\le 4 $. By Lemma \ref{lemma:topological cyclic}, we have that $ D $ can be one of the following: $ (0,0,0,0), (1,1,1), (1,1,0),(k,0,0),0\le k\le 2 $. Note that $ (0,0,0,0) $ is toric equivalent to $ (1,0,-1,0) $ using the balancing move in Example \ref{eg: balancing self-intersection by $0$-sphere} and thus toric equivalent to $ (1,1,1) $. Similarly, combining the balancing move and toric blow-down, we have that $ (1,1,0) $ is toric equivalent to $ (4,1) $ and $ (k,0,0) $ to $ (k+2,1) $. It's easy to check they are indeed rationally embeddable as they can be realized as blow-ups of the divisors in Theorem \ref{thm:minimal model}.
	
	In the case $ r=2 $, we cannot assume $ D $ to be toric minimal. Again by Lemma \ref{lemma:topological cyclic}, we have that $ D $ is $ (k,-1), -1\le k $ or one of $ (4, 1), (4, 0), (3, 1), (3,0),  (2, 2), (2, 1), (2, 0), 
	(1, 1),\\ (1, 0), (0, 0) $. In the first case, we have $ 8\ge D^2=k+3  $ because $ X $ contains at least one exceptional class. So $ k\le 5 $. They are all rationally embeddable as blow-ups of the divisors in Theorem \ref{thm:minimal model}.
\end{proof}
Given two circular spherical divisors $ D,D' $ of length $ l $, 
we say  $ D $ is \textbf{blown-up} if $ D$ can be obtained from non-toric blowing up $ D_0 $, for some toric blow-up $ D_0 $ of $ (1,1,1) $. Note that $ D=(1,1-p_1,-p_2,\dots,-p_{l-1},1-p_l) $ being blown-up in our definition is equivalent to the dual cycle of $ (-p_1,\dots,-p_l) $ being embeddable in the sense of \cite{GoLi14}. In Theorem 3.1 (iii) of \cite{GoLi14}, it is proved that minimal symplectic fillings of the boundary contact torus bundles of such divisors always have vanishing first Chern class. Their proof translates to the following lemma in our setting.
\begin{lemma}[\cite{GoLi14}]\label{lemma:hyperbolic sequence embeddable}
	If $ D=(1, -p_1+1, -p_2, ...., -p_{l-1}, -p_l+1)$ with $p_i\geq 2$ and $l\geq 2$, then it is rationally embeddable, anti-canonical and rigid if and only if it is blown-up.
\end{lemma}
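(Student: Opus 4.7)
The plan is to establish the two nontrivial equivalences between being rationally embeddable and being blown-up, and then deduce the anti-canonical and rigid conditions from these using Theorem \ref{thm:embeddable=rigid}. The direction blown-up $\Rightarrow$ anti-canonical I would handle by direct construction: starting from $(1,1,1) \subset \CC\PP^2$, which is anti-canonical by case $(B3)$ of Theorem \ref{thm:minimal model}, the specified sequence of toric blow-ups followed by non-toric blow-ups can be realized symplectically while preserving $[D] = -K_X$ at each step. Since the resulting $D$ has $b^+(D) \geq 1$ (from the $+1$-sphere $C_1$), rigidity then follows from Theorem \ref{thm:embeddable=rigid}.

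For the main implication, rationally embeddable $\Rightarrow$ blown-up, suppose $D \hookrightarrow (X,\omega)$ with $X$ a symplectic rational surface. I would blow down $X$ to $\CC\PP^2$ via symplectic blow-downs chosen so as to preserve $C_1$ as a $+1$-sphere. Since $[C_1]^2 = 1$, Lemma \ref{lem: existence of nice J-sphere} applied with the divisor $C_1$ alone lets me pick a $J$-holomorphic representative of every exceptional class $e$ with $e \cdot [C_1] \geq 0$; when $e \cdot [C_1] = 0$ this representative is disjoint from $C_1$. Iteratively blowing down such exceptional curves reduces $X$ to a minimal rational surface, and the parity of $[C_1]^2 = 1$ rules out $S^2 \times S^2$, so we land in $\CC\PP^2$. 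Each individual blow-down either is disjoint from $D$, reverses a non-toric blow-up on some $C_i$ with $i \neq 1$ (raising $s_i$ by $1$), or reverses a toric blow-up (deleting a $(-1)$-component of $D$).

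At the end of this process, $D$ becomes a cycle $D^*$ of symplectic spheres in $\CC\PP^2$ containing $C_1$ with $[C_1] = h$. By the genus formula, a smooth symplectic sphere of class $dh$ in $\CC\PP^2$ satisfies $d^2 - 3d + 2 = 0$, so $d \in \{1, 2\}$. Because adjacent components in a cycle meet transversely at a single point, $d_i d_{i+1} = 1$, which forces every component to be a line. Three distinct lines forming a cycle can only form the configuration $(1,1,1)$, so $D^* = (1,1,1)$.

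The main obstacle will be arranging the blow-down sequence into the canonical order required by the definition of blown-up: all non-toric blow-downs first (reverting $D$ to an intermediate $D_0$), then all toric blow-downs (reducing $D_0$ to $(1,1,1)$). The reordering should go through because non-toric and toric exceptional curves sit over disjoint points of $D$ (smooth points of components versus nodes of $D$) and hence can be blown down in either order up to symplectic isotopy. The outcome exhibits $D$ as a non-toric blow-up of some toric blow-up $D_0$ of $(1,1,1)$, certifying $D$ as blown-up. This parallels the strategy of \cite{GoLi14}, Theorem 3.1(iii), now translated into the symplectic setting of the present paper.
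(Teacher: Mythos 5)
Your forward direction (blown-up $\Rightarrow$ rationally embeddable and anti-canonical) is fine and agrees with the paper, but your route to rigidity is circular: Theorem \ref{thm:embeddable=rigid} is established through Propositions \ref{prop:anti-canonical} and \ref{prop:rigid}, both of which cite Lemma \ref{lemma:hyperbolic sequence embeddable} for precisely this family of divisors, so you cannot invoke that theorem here. (Your own blow-down analysis could in principle yield rigidity directly --- minimality of the complement rules out exceptional curves disjoint from $D$, so every blow-down preserves anticanonicity and $[D]=-K_X$ would follow from $[D^*]=-K_{\CC\PP^2}$ --- but that has to be said explicitly rather than deferred to the theorem.)

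The substantive gap is in the converse. The trichotomy you assert --- each exceptional curve blown down is disjoint from $D$, reverses a non-toric blow-up, or reverses a toric blow-up --- is exactly the hard point, and nothing in your argument justifies it. Applying Lemma \ref{lem: existence of nice J-sphere} to $C_1$ alone controls only $e\cdot [C_1]$; the embedded $J$-holomorphic representative of $e$ may meet some other component $C_i$ in two or more points, meet several components at once, or pass through a node of $D$, since nothing forces $e\cdot [C_i]\le 1$ or $e\cdot [D]\le 1$ (you cannot use $e\cdot[D]=e\cdot(-K_X)$, as anticanonicity is what you are trying to prove). Any such blow-down destroys the cycle structure and the rest of the argument collapses. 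This is precisely the difficulty the paper's proof (following \cite{GoLi14}) circumvents: it first toric blows up $D$ to $D'$, removes the component $S'$ of square $-p_l$ to obtain a linear string $\tilde D'$, and invokes Theorem 4.2 of \cite{Lis08-lens} to produce a controlled sequence of blow-downs carrying $\tilde D'$ to two lines, with each exceptional curve either contained in $D'$ or meeting it exactly once; only afterwards is $S'$ tracked to a third line. Your endgame also has a loose end: ``adjacent components meet at a single point'' fails for a length-two cycle, so without further argument $D^*$ could a priori be a line plus a conic rather than $(1,1,1)$.
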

\begin{proof}
		If $ D $ is of such form and is blown-up, then $ D $ is realized by blowing up three lines in general position inside $ \CC\PP^2 $ and is rationally embeddable (Lemma 2.4 of \cite{GoLi14}). The converse was actually contained in the proof of Theorem 3.1 (iii) of \cite{GoLi14} but was not explicitly written in their theorem. We recall their arguments here for readers' convenience. 
		Let $ D $ be embedded in a symplectic rational surface $ (X,\omega ) $ with $ D=(1,1-p_1,\dots,-p_{l-1},1-p_l) $, we blow up $ D $ to $ D' $ in $ X'=\CC\PP^2\# M\overline{\CC\PP}^2 $ with $ M\ge 1 $ and $ D'=(1,1-p_1,-p_2,\dots,-p_{l-1}-1,-1,-p_l) $. We choose an $ \omega  $-tame almost complex structure $ J $ on $ X $ that makes all the symplectic spheres in $ D' $ $ J $-holomorphic.
		Let $ S' $ be the irreducible component in $ D' $ with $ [S']^2=-p_l $ and let $ \tilde{D}'=D'-S' $ be the symplectic string with intersection sequence $ (1,1-p_1,-p_2,\dots,-p_{l-1}-1,-1) $. By Theorem 4.2 of \cite{Lis08-lens}, there is a sequence of symplectic blowdowns of $ X' $ to $ \CC\PP^2 $ such that $ \tilde{D}' $ blows down to the union of two lines $ L\cup L'\subset \CC\PP^2 $. At each blow-down, the almost complex structure $ J $ descends. Since the complement of $ D' $ is minimal, the exceptional divisors we blow down either intersect $ D' $ once or contained in $ D' $.
		During this process, $ S' $ blows down to a smoothly embedded symplectic sphere intersecting both $ L $ and $ L' $ exactly once, hence $ S' $ blows down to a line. So $ D $ is a blow-up of $ (1,1,1) $ corresponding to three lines in $ \CC\PP^2 $, which exactly means $ D $ is blown-up.
\end{proof}
\begin{lemma}\label{lemma:k+p=5}
	$(5+p, -p)$ with $ p\ge -2 $ is not rationally embeddable if $ p\neq -1 $.
\end{lemma}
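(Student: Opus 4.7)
The plan is to observe that $[D]^2 = (5+p) + (-p) + 2\cdot 2 = 9$ saturates the Ohta--Ono bound of Lemma \ref{lemma:constraint on D^2}, and then to exploit this rigidity to pin down the homology classes of $C_1$ and $C_2$ so tightly that no $p \geq -2$ with $p \neq -1$ remains consistent. Since $9$ is odd while the intersection form on $S^2 \times S^2$ is even, any rational surface $X$ containing $D$ must be of the form $\CC\PP^2 \# k\overline{\CC\PP}^2$ for some $k \geq 0$.

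Assuming such an embedding, I would smooth the two transverse intersection points of $D$ to produce an embedded symplectic torus $T \subset X$ with $[T] = [D]$ and $[T]^2 = 9$. The equality case of the Ohta--Ono bound, extracted from the proof of Theorem 6.10 in \cite{OhOn03-simple-elliptic}, then furnishes a basis $H, E_1, \dots, E_k$ of $H_2(X; \Z)$, arising from a symplectic blow-down sequence $X \to \CC\PP^2$, for which $[T] = 3H$; equivalently, the exceptional curves in this blow-down sequence can all be chosen disjoint from $T$, so that $T$ sits inside $\CC\PP^2 \subset X$ as a smooth cubic. This step is the main obstacle; if the equality case is not available as a black box, one can derive it by an iterated Cremona reduction from any symplectic basis, showing that whenever $A := [T]\cdot H$ exceeds $3$ there are three indices $j_1, j_2, j_3$ with $[T]\cdot E_{j_1} + [T]\cdot E_{j_2} + [T]\cdot E_{j_3} > A$, so that the standard Cremona transformation on $(E_{j_1}, E_{j_2}, E_{j_3})$ strictly decreases $A$ and the process terminates only at $A = 3$.

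Once $[T] = 3H$ is in hand, write $[C_i] = a_i H - \sum_j b_{ij} E_j$; the decomposition $[T] = [C_1] + [C_2]$ gives $a_1 + a_2 = 3$ and $b_{1j} + b_{2j} = 0$ for every $j$. Positivity of intersection of each embedded symplectic sphere $C_i$ with each $J$-holomorphic exceptional sphere representing $E_j$ yields $b_{ij} = [C_i]\cdot E_j \geq 0$, and combined with $b_{1j} + b_{2j} = 0$ this forces $b_{ij} = 0$ for all $i, j$, whence $[C_i] = a_i H$. Since each $C_i$ is a non-null symplectic sphere, $a_i \geq 1$, and thus $(a_1, a_2) \in \{(1, 2), (2, 1)\}$, giving $\{[C_1]^2, [C_2]^2\} = \{1, 4\}$. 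Matching with the prescribed $\{5+p, -p\}$ forces $p = -1$ (from $(4, 1)$) or $p = -4$ (from $(1, 4)$), both excluded by the hypotheses $p \geq -2$ and $p \neq -1$, giving the desired contradiction.
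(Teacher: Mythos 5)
Your strategy---smooth $D$ to a torus $T$ with $[T]^2=9$, reduce to $[T]=3H$, and then decompose $[C_1]+[C_2]=3H$---is genuinely different from the paper's proof (which instead puts the positive sphere $C_1$ into McDuff's standard form as a section of a Hirzebruch surface and runs a Diophantine analysis against the adjunction formula for $C_2$). Unfortunately your argument has a real gap at the positivity step. You need $b_{2j}=[C_2]\cdot E_j\ge 0$, but $[C_2]^2=-p$ is \emph{negative} for every $p\ge 1$, which is most of the range of the lemma. Positivity of intersection with a stable class is only automatic for symplectic surfaces of non-negative square; for a negative sphere that you have forced to be $J$-holomorphic, the $J$-holomorphic representative of $E_j$ may degenerate and contain $C_2$ as a component, so $[C_2]\cdot E_j<0$ is entirely possible (this is exactly why the paper has to introduce the ``$D$-good'' hypothesis in Lemma \ref{lem: existence of nice J-sphere} rather than deducing non-negativity). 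Indeed, what you can legitimately conclude is only $b_{1j}\ge 0$ and $b_{2j}=-b_{1j}\le 0$, and the resulting system $a_1+a_2=3$, $a_1a_2+\sum_j b_{1j}^2=2$, $a_1^2-\sum_j b_{1j}^2=5+p$ has integer solutions for every $p\equiv 2 \pmod 3$, e.g.\ $[C_1]=3H-E_1-E_2$, $[C_2]=E_1+E_2$ for $p=2$ (i.e.\ $D=(7,-2)$). These survive all of your constraints and are only killed by the adjunction formula ($C_1$ would have genus $1$), a tool your write-up never invokes. So the proof as written fails for all $p\ge 1$.

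A secondary issue is that the reduction to $[T]=3H$ is asserted rather than proved: the equality case of Lemma \ref{lemma:constraint on D^2} is not stated in that form, and the combinatorial inequality driving your Cremona reduction (that $[T]\cdot H>3$ forces three indices with $m_{j_1}+m_{j_2}+m_{j_3}>[T]\cdot H$) needs an argument. This part is repairable---for instance, if $X\ne\CC\PP^2$ then $c_1^2<9=[T]^2$ together with the light cone lemma forces an exceptional class orthogonal to $[T]$, and one can blow down inductively as in the proof of Lemma \ref{lemma:(-1,-3) rigid}---but as it stands it is a black box. To salvage the whole proof you would need both to justify the reduction to $[T]=3H$ and to add an adjunction argument ruling out the residual solutions with $\sum_j b_{1j}^2>0$; at that point you would essentially be redoing the computation the paper carries out directly.
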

\begin{proof}
	Suppose $ D=(5+p,-p) $ is symplectically embedded in a symplectic rational surface $ (X,\omega) $. Observe that if $ p\ge 0 $, $ X $ cannot be $ \CC\PP^2 $ because there is no second homology class with negative self-intersection in $ \CC\PP^2 $. Also, $ (3,2) $ cannot be embedded into $ \CC\PP^2 $ either because there is no second homology class with self-intersection $ 2 $ or $ 3 $. So we could assume $ X=\CC\PP^2\# l\overline{\CC\PP}^2 $ for some $ l\ge 1 $ or $ X=S^2\times S^2 $.
	
%
%
	Now we use the standard form of sphere classes with positive square to show such configuration of symplectic spheres cannot be embedded in a symplectic rational surface.
	
	\textbf{If $ C_1 $ is an odd sphere} with $[C_1]^2=2x+1\geq 3$, 
	by \cite{Mc90-structure}, there is a symplectomorphism of $ X $ to a blow-up of a Hirzebruch surface which identifies $ C_1 $ with the section. Then in terms of the standard basis $ H_2(X;\ZZ)=\ZZ\{ h,e_1,\dots,e_l \} $, we have 
	\begin{align*}
	&[C_1]=  (x+1)h -x e_1,\\ &[C_2]=ah-be_1 - \sum_{i= 2}^l b_i e_i,\\
	&[C_2]^2=a^2-b^2-\sum_{i=2}^l b_i^2=5-( 2x+1)=-2x+4,\\
	&[C_1]\cdot [C_2]=(x+1)a-xb=2.
	\end{align*}
	The equation with $ [C_2]^2$ implies that  $a^2-b^2\geq -2x+4$. Any solution to $ (x+1)a-xb=2 $ is of the form $(a, b)=(ux+2, u(x+1)+2)$  for an integer $u$. 
	We have that 
	\begin{align*}
	a^2-b^2&=(ux+2)^2-(u(x+1)+2)^2\\&=-u(2ux+u+4)=(-2x-1)(u^2+\frac{4}{2x+1}u)=:F(u).
	\end{align*}
	First we check that $ u\neq 0 $. If $ u=0 $, then $ (a,b)=(2,2) $. The adjunction formula for $g(C_2)=0$ is  $(a-1)(a-2) -b(b-1)- \sum b_i(b_i-1)=0$. Since $y(y-1)\geq 0$ for any integer $y$, we have 
	$$(a-1)(a-2)\ge b(b-1).$$
	Clearly $(a,b)=(2,2)$ violates this inequality.
	
	Since  $0<\frac{4}{2x+1}<2$ the values of $(u^2+\frac{4}{2x+1}u)$ for an integer $u$ are smallest when $u=0$ or $-1$. We have $F(u)\leq F(-1)=-2x+3 < -2x+4$ for $u\leq -1$ and $F(u)\leq  F(1)=-2x-5< -2x+4$ for $u\geq 1$. But this contradicts $ a^2-b^2\ge -2x+4 $.

	\textbf{If $ C_1 $ is an even sphere} with $[C_1]^2=2x\geq 6$, 
	by \cite{Mc90-structure}, there is a symplectomorphism of $ X $ to a blow-up of a Hirzebruch surface which identifies $ C_1 $ with the section. Then in terms of the standard basis $ H_2(X;\ZZ)=\ZZ\{ s,f,e_1,\dots,e_l \}  $, we have 
	\begin{align*}
	&[C_1]=  s+ x f,\\
	&[C_2]=as+ bf - \sum_{j=1}^l b_j E_j,\\
	&[C_2]^2=2ab-\sum_{j=1}^l b_j^2=5-2x,\\
	&[C_1]\cdot [C_2]=ax+b=2.
	\end{align*}
	The equation with $ [C_2]^2 $ implies that $ 2ab\geq -2x+5$. Any solution to $ax+b=2$ is of the form $(u, -ux+2)$ for an integer $u$. Similarly, we have that \begin{align*}
	2ab&=2u(-ux+2)\\ &=-2x u^2+4u=-2x(u^2+\frac{4}{2x}u)=:G(u).
	\end{align*}
	Again we check that $ u\neq 0 $. If $ u=0 $, then $ (a,b)=(0,2) $. The adjunction formula for $g(C_2)=0$ is  $2(a-1)(b-1)- \sum b_i(b_i-1)=0$
	Since $y(y-1)\geq 0$ for any integer $y$, we have 
	$$(a-1)(b-1)\ge 0.$$
	Clearly $(a,b)=(0,2)$ violates this inequality. 
	
	Since $0<\frac{4}{2x}<2$, we have $G(u)\leq G(-1)=-2x+4< -2x+5$ for $u\leq -1$ and $G(u)\leq G(1)=-2x-4< -2x+5$ for $ u\ge 1 $. But this contradicts $ 2ab\ge -2x+5 $.
	
\end{proof}
\begin{prop}\label{prop:list}
	Any rationally embeddable circular spherical divisor with $ b^+\ge 1 $ is toric equivalent to one in the list:
	\begin{enumerate}[\indent$  (1) $]
		\item $ (1,1,p) $ with $ p\le 1 $.
		\item $ (1,p) $ with $ p\le 4 $.
		\item $ (0,p) $ with $ p\le 4 $.
		\item $ (1,1-p_1,-p_2,\dots,-p_{l-1},1-p_l) $ with $ p_i\ge 2 $, $ l\ge 2 $.
		\item $ s_i\ge -1 $ for all $ i $.
		\item $ (-1,-2) $, $ (-1,-3) $
	\end{enumerate} 
\end{prop}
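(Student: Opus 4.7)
The plan is to classify rationally embeddable $D$ with $b^+(D)\geq 1$ up to toric equivalence by reducing to a toric-minimal (or length-$2$) representative and then running a case analysis guided by Lemma \ref{lemma:topological cyclic}. First I would invoke Lemma \ref{lemma:toric eq preserves} (rational embeddability is a toric-equivalence invariant) together with Lemma \ref{lemma:toric eq topological} ($b^+$ is also an invariant) to toric blow down $D$ maximally, reaching either a toric minimal divisor or $D=(-1,p)$. In the latter case, a direct eigenvalue computation on $Q_D=\bigl(\begin{smallmatrix}-1&2\\2&p\end{smallmatrix}\bigr)$ shows that $b^+(D)\geq 1$ if and only if $p\geq -3$, placing $D$ in family (5) for $p\geq -1$ and in family (6) for $p\in\{-2,-3\}$.

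Next assume $D$ is toric minimal. If every $s_i\geq -1$ then $D$ lies in family (5) tautologically; otherwise some $s_i\leq -2$, and Lemma \ref{lem: not negative semi-definite => at least one non-negative} combined with $b^+(D)\geq 1$ forces $r^{\geq 0}(D)\geq 1$. Since every symplectic rational surface has $b^+(X)=1$, Lemma \ref{lemma:topological cyclic} applies and pins down the shape of $D$. I would case-analyse on $(r(D),r^{\geq 0}(D))$. For $r(D)=2$, the bound $s_1+s_2\leq 5$ from Lemma \ref{lemma:constraint on D^2} together with Lemma \ref{lemma:k+p=5} narrows the possibilities; each remaining candidate is toric equivalent to a member of family (2), (3), or (4) via the blow-up $(s_1,s_2)\to (s_1-1,-1,s_2-1)$ followed by balancing. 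The $r(D)\geq 3$ sub-cases already of shape $(1,1,p)$ sit in family (1), and those of shape $(1,s_2,\ldots,s_r)$ with $s_i\leq -2$ for $i\geq 2$ are, after the relabelling $s_i=-p_i$ or $s_i=1-p_i$ at the endpoints, exactly family (4).

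The recurring device in the remaining sub-cases (those containing a $0$-sphere) is the balancing move at the $0$-sphere from Example \ref{eg: balancing self-intersection by $0$-sphere}: locally $(a,0,b)\sim(a-n,0,b+n)$ for any integer $n$. Choosing $n$ so that one neighbour of the $0$-sphere becomes $-1$ breaks toric minimality, and a subsequent toric blow-down at the new $-1$ collapses the $0$-sphere together with a neighbour into a single component of higher self-intersection, eliminating the $0$ and reducing $r(D)$ by one. For instance $(0,-2,-2)\sim(0,-1,-3)\sim(1,-2)$, and $(0,s_2,0,s_4)\sim(0,-1,0,s_2+s_4+1)\sim(1,1,s_2+s_4+1)$. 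Iterating this procedure should absorb all sub-cases of Lemma \ref{lemma:topological cyclic} containing $0$-components, bringing each into family (1), (2), (3), or (4).

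The main obstacle is ensuring that this balance-and-collapse step is genuinely exhaustive: every length-reduced divisor it produces must again sit in the classification. I would formalize this by induction on $r(D)$, with $r(D)\leq 4$ verified explicitly via Lemma \ref{lemma:topological cyclic} and the inductive step carried by the length reduction above. The subtlest configuration is $r(D)\geq 5$ with $r^{\geq 0}(D)=1$, where no $0$-sphere is available; here I would exploit $[D]^2\leq 9$ from Lemma \ref{lemma:constraint on D^2} together with the rigid shape of Lemma \ref{lemma:topological cyclic}(1) to show that such a $D$ already matches family (4), sharply bounding $s_1$ and forcing $s_1=1$.
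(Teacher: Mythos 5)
Your overall strategy coincides with the paper's proof: reduce to a toric minimal representative (or to $(-1,p)$) using Lemmas \ref{lemma:toric eq preserves} and \ref{lemma:toric eq topological}, constrain the shape of $D$ via Lemma \ref{lemma:topological cyclic} according to $(r(D),r^{\geq 0}(D))$, use the balancing move at $0$-spheres followed by a toric blow-up/blow-down to collapse length, and close by induction on $r(D)$. Your treatment of $(-1,p)$, of the $r(D)=2$ case via Lemmas \ref{lemma:constraint on D^2} and \ref{lemma:k+p=5}, and your balance-and-collapse reductions such as $(0,s_2,0,s_4)\sim(1,1,s_2+s_4+1)$ are all correct and match the paper.

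There is, however, a genuine gap in your final paragraph, in the sub-case $r^{\geq 0}(D)=1$ with $s_1\geq 2$ and $s_i\leq -2$ for $i\geq 2$. You claim that $[D]^2\leq 9$ ``sharply bounds $s_1$ and forces $s_1=1$.'' It does not: since $[D]^2=\sum_i s_i+2r(D)\leq s_1+2$ whenever $s_i\leq -2$ for $i\geq 2$, the inequality $[D]^2\leq 9$ places no upper bound on $s_1$ once some component is sufficiently negative (for instance a cycle of shape $(100,-2,\dots,-2,-100)$ has $[D]^2=4$). The sub-case is not vacuous --- toric minimal divisors such as $(2,-2,-2,-3)$, obtained by toric blow-down from the family-(4) divisor $(1,-1,-3,-2,-3)$, have $s_1=2$ and must be accounted for --- and the same omission affects $r(D)=3,4$ with $r^{\geq 0}(D)=1$ and $s_1\geq 2$, which your phrase ``those of shape $(1,s_2,\dots,s_r)$ with $s_i\leq -2$'' does not cover. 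The repair is the device you already deploy for $r(D)=2$: iterate the toric blow-up between $C_1$ and $C_2$ (and then between the proper transform of $C_1$ and the successive exceptional spheres) to drive the leading entry down to $1$, arriving at $(1,-1,-2,\dots,-2,s_2-1,\dots,s_r)$, which is visibly of the form (4); no bound on $s_1$ is needed or available. With that substitution your argument agrees with the paper's proof.
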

\begin{proof}
	The proof is a case-by-case analysis based on the length of the divisor. Since a rationally embeddable circular spherical divisor must have $ b^+\le 1 $, Lemma \ref{lemma:topological cyclic} applies here.
	
	\noindent\textbf{Case 1}: $r(D)=2$.
	\begin{itemize}
		\item When $r^{\geq 0}(D)=2$, the divisors are listed in (6) of Lemma \ref{lemma:topological cyclic}. They all belong to (5) of the list.
		\item When  $r^{\geq 0}(D)=1$, they are of the forms $(k, p)$ with $ k\ge 0 $, $ p<0 $ and $k+p\leq 5$ by Lemma \ref{lemma:constraint on D^2}. If $ k+p=5 $, then by Lemma \ref{lemma:k+p=5} the only rationally embeddable one is $ (1,4) $ belonging to (2). Now suppose $ k+p\le 4 $.
		If $ k\le 1 $, the divisor $ (k,p) $ belongs to (2) or (3) of the list. If $ k\ge 2 $, we can toric blow up the pair $ (C_1,C_2) $, and if necessary, apply successive toric blow-ups to the pairs of the proper transform of $ C_1 $ and the exceptional spheres to get $ \bar D $ with $ \bar s_1=1, \bar s_2 = -1, \bar s_i\le -2 $ for $ i\ge 3 $. Then $ \bar D $ belongs to (4) of the list.
		\item When $r^{\geq 0}(D)=0$, by (8) of Lemma \ref{lemma:topological cyclic}, there are only 3 circular spherical divisors with $b^+(Q_D)=1$: $(-1, -1), (-1, -2)$ and  $(-1, -3)$. $ (-1,-1) $ belongs to (5) and $ (-1,-2),(-1,-3) $ belong to (6) of the list.
	\end{itemize}
	
	\noindent\textbf{Case 2}: $r(D)=3$ and $ D $ is toric minimal (if not,  reduce to the $r=2$ case). 
	\begin{itemize}
		\item When  $r^{\geq 0}(D)=3$, it belongs to (5) of the list.
		\item When $r^{\geq 0}(D)=2$, by (5) of Lemma \ref{lemma:topological cyclic}, we can assume that $D=(1, 1, p\leq 1)$, or $D=(s_1\geq 0, 0, s_3)$ with $ s_1+s_3\leq 2$. The former case is already in the list. In the latter case, we can apply the balancing move as in Example \ref{eg: balancing self-intersection by $0$-sphere} based at $ C_2 $ to decrease $ s_1 $ to $\bar s_1=0 $ and increase $ s_3 $ to $ \bar s_3=s_3+s_1 $. Denote the new divisor by $ \bar D $. By toric blowing up the pair $ (\bar C_1,\bar C_2) $ and contracting the proper transform of $ \bar C_1,\bar C_2 $, we can reduce the length of the divisor to 2. So $ D $ is toric equivalent to one in the list.
		\item When $r^{\geq 0}(D)=1$, 
		if $s_1=1$, then it belongs to (4) of the list.
		If  $s_1\geq 2$, toric blow up the pair $C_1, C_2$, and if necessary, 
		apply successive toric blow-ups to the pairs of  the proper transforms  of $C_1$ and the exceptional spheres to get $\bar D$ with $\bar s_1=1, \bar s_2=-1, \bar s_i\leq -2$  for $i\geq 3$ (so $\bar D$ is not  toric minimal), so $\bar D$ belongs to (4) of the list.
		
		If $s_1=0$, apply the balancing move based at $C_1$ to increase  $s_2$ to $\bar s_2=0$ (while decreasing $s_r$ to $\bar s_r=s_r + s_2$). Notice that  $r^{\geq 0}(\bar D)=2, \bar s_1=\bar s_2=0$, and $\bar D$ toric minimal. We treated this case above. 
	\end{itemize}
	
	\noindent\textbf{Case 3}: $r(D)=4$ and $D$ is toric minimal. 
	\begin{itemize}
		\item When $r^{\geq 0}(D)=4$,  it is $(0, 0, 0, 0)$
		by (3) of Lemma \ref{lem: non-negative components}.	Note that any divisor of form $ (0,0,0,p) $ is toric equivalent to the divisor $ (1,1,p+1) $ by toric blowing up $ (C_1,C_2) $ and contracting the proper transforms of $ C_1,C_2 $.
		\item When $r^{\geq 0}(D)=3$, by (2) of Lemma \ref{lemma:topological cyclic}, $(s)=(k, 0, p, 0)$ with $k\geq 0, p<-k$. Using the balancing move based at $ C_2 $, $ (k,0,p,0) $ is toric equivalent to $ (0,0,k+p,0) $ and thus toric equivalent to $ (1,1,k+p+1) $.
		\item When $r^{\geq 0}(D)= 2$, by (3) of Lemma \ref{lemma:topological cyclic} we have $(0, p_1, 0, p_2), p_i< 0$ or $(k\geq 0, 0, p_1, p_2),p_i<0, k+p_1+p_2\leq 0$. 
		
		For the case $(k, 0, p_1, p_2)$ we apply the balancing move based at $ C_2 $ to transform $ D $ to $\bar D$ with  $\bar s_1=\bar s_2=0, \bar s_3=s_3+s_1=k+p_1, \bar s_4=s_4=p_2\leq -2$. By blowing up the pair $ (\bar C_1,\bar C_2) $ and contracting the proper transforms of $ \bar C_1,\bar C_2 $, we get a divisor $ \bar D' $ of length 3.
		
		For the case  $(0, p_1, 0, p_2), p_i<0$, using the balancing move based at $ C_3 $, it is toric equivalent to $ (0,0,0,p_1+p_2) $ and thus toric equivalent to $ (1,1,p_1+p_2+1) $.
		\item When $r^{\geq 0}(D)= 1$,   if $s_1=1$ and $ s_i\le -2 $ for $ i\ge 2 $, then it belongs to (4) of the list.
		
		If  $s_1\geq 2$, toric blow up the pair $C_1, C_2$, and if necessary, 
		apply successive toric blow-ups to the pairs of  the proper transforms  of $C_1$ and the exceptional spheres to get $\bar D$ with $\bar s_1=1, \bar s_2=-1, \bar s_i\leq -2$  for $i\geq 3$, so $\bar D$ belongs to (4) of the list.
		
		If $s_1=0$, apply the balancing move based at $C_1$ to increase  $s_2$ to $\bar s_2=0$ (while decreasing $s_r$ to $\bar s_r-s_2$). Notice that  $r^{\geq 0}(\bar D)=2, \bar s_1=\bar s_2=0$, and $\bar D$ toric minimal. We have treated this case above. 
	\end{itemize}
	
	\noindent\textbf{Case 4}: $ r(D)\geq 5$ and $ D $ is toric minimal.
	
	This is proved by induction. 
	Suppose we have proved the case $r(D)\leq n$ with some $n\geq 4$, where $D$ is not assumed to be toric minimal. The case $ r(D)=n+1 $ and $ D $ not toric minimal follows directly from induction hypothesis by toric blow-down. We will verify the case where $r(D)=n+1$ and $D$ toric minimal. 
	
	We have $r^{\geq 0}(D)\geq 1$ by (1) of Lemma \ref{lem: not negative semi-definite => at least one non-negative} and we may assume that $s_1\geq 0$. By (1) of Lemma \ref{lemma:topological cyclic}, $r^{\geq 0}(D)\leq 2$. 
	\begin{itemize}
		\item When $r^{\geq 0}(D)=2$, by (1) of Lemma \ref{lemma:topological cyclic}, we can assume that  $s_1\geq s_2=0$. Apply the balancing move based at $C_2$ to transform to $\bar D$ with  $\bar s_1=\bar s_2=0, \bar s_3=s_3+s_1, \bar s_i=s_i\leq -2$ for $ 4\le i\le n+1 $. Toric blow up the pair $(\bar C_1, \bar C_2)$ and then contract the proper transforms of $\bar C_1$ and $ \bar C_2$ to get $\bar D'$ with $\bar s_1'=1, \bar s_2'=\bar s_3+1, \bar s_{n}'=\bar s_{n+1}+1$. Since $ r(\bar D')=r(D)-1=n$, the induction hypothesis applies. 
		%
		%
		\item Suppose $r^{\geq 0}(D)=1$ and we may assume $ s_1\ge 0 $. If $s_1=1$, then it belongs to (4) of the list.
		If  $s_1\geq 2$, toric blow up the pair $C_1, C_2$, and if necessary, 
		apply successive toric blow-ups to the pairs of  the proper transforms  of $C_1$ and the exceptional spheres to get $\bar D$ with $\bar s_1=1, \bar s_2=-1, \bar s_i\leq -2$  for $i\geq 3$, which belongs to (4) of the list.
		
		If $s_1=0$, apply the balancing move based at $C_1$ to increase  $s_2$ to $\bar s_2=0$ (while decreasing $s_{n+1}$ to $\bar s_{n+1}=s_{n+1}+s_2< -1$ as $ s_2,s_{n+1}< -1 $). Notice that  $r^{\geq 0}(\bar D)=2, \bar s_1=\bar s_2=0$, and $\bar D$ is toric minimal. We have treated this case above.
	\end{itemize}
\end{proof}
\begin{proof}[Proof of Theorem \ref{thm:list}]
	The theorem is basically Proposition \ref{prop:list} with the exception of a few cases. We still need to show that the list in Proposition \ref{prop:list} is toric equivalent to the list in the theorem and every circular spherical divisor in the theorem can indeed be symplectically embedded in a rational manifold. This is done by Lemma \ref{lemma:s_i>=-1 embeddable}, Lemma \ref{lemma:hyperbolic sequence embeddable} and simply observing the following toric equivalences (denoted by $ \sim $): \begin{align*}
	(-1,k)&\sim (1,-1,-2,\dots,-2) \text{ with $ k-1 $ number of $ -2 $}, k\ge 2\\
	(1,4)&\sim (3,-1,0) \sim (1,1,0)\\
	(2,2)&\sim (1,1,-1)
	\end{align*}
	As in Section \ref{subsection:torus bundle}, each divisor $D=(s_1,\dots,s_r)$, its negative boundary $-Y_D$ is a torus bundle with monodromy $A(-s_1,\dots,-s_r)^{-1}$.
	So their monodromy can be easily calculated. 
	For family (4) where $D=(1,p)$ or $(1,1-p_1,-p_2,\dots,-p_{l-1},1-p_l)$, by smoothly blowing down the $(+1)$ component, it has the same boundary $Y_D$ as $(-p_1,\dots,-p_l)$ with $l\ge 1$, $p_i\ge 2$ and at least some $p_j\ge 3$, which is negative hyperbolic by Theorem 6.1 of \cite{Ne81-calculus}. Note that if $\tr(A)<-2$, then $\tr(A^{-1})<-2$. So $-Y_D$ is also negative hyperbolic.

	Finally we show all 3-manifolds in the list are distinct. For families (1)(2)(3), the monodromies are all distinct. 
	For family (4), as we mentioned above, the family of torus bundles $Y_D$ are the same as the family of torus bundles obtained as boundaries of $(-p_1,\dots,-p_l)$ with $l\ge 1$. Note that by Proposition 6.3 of \cite{Ne81-calculus}, the divisors 
	are in bijection with all conjugacy classes of elements $ A\in SL(2,\Z) $ 
	with trace $\ge 3$. Since the classification of conjugacy classes of $A\in SL(2,\ZZ)$ with trace $\le -3$ is equivalent to the classification for trace $\ge 3$ by multiplying $-1$, we conclude that the divisors $(-p_1,\dots,-p_l)$ are in bijection with conjugacy classes 
	of elements $ A\in SL(2,\Z) $ with trace $\le -3$. Hence all torus bundles in family (4) are distinct.
\end{proof}

\subsection{Rationally embeddable $D$ is anti-canonical}

\begin{lemma}\label{lemma:s_i>=-1}
	Suppose $ T $ is an embedded symplectic torus in a symplectic rational surface $ (X,\omega) $ such that $ [T]^2>0 $ and $ [T]\cdot e\neq 0 $ for all exceptional class $ e $, then $ PD([T])=c_1(X,\omega) $. In particular, a rationally embeddable $ D=(s_1,\dots,s_r) $ with $ s_i\ge -1 $ for all $ i $ is anti-canonical and rigid.
\end{lemma}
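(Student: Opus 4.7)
The plan is to first prove the main statement about the torus $T$, and then reduce the divisor statement to it by smoothing. By Lemma~\ref{lemma:maximal=minimal complement}, the hypothesis on $T$ is equivalent to $X-T$ being minimal. Applying Proposition~4.1 of \cite{OhtOn05-cusp} to make $T$ and an embedded representative of each exceptional class simultaneously $J$-holomorphic, positivity of intersection strengthens the assumption to $[T]\cdot e\geq 1$ for every exceptional class $e$. The genus-one adjunction formula yields $c_1\cdot[T]=[T]^2$, so $\alpha:=PD(c_1)-[T]$ is orthogonal to $[T]$; since $b^+(X)=1$ and $[T]^2>0$, the orthogonal complement of $[T]$ is negative definite, forcing either $\alpha=0$ (the desired conclusion) or $\alpha^2<0$. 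Note also that $\alpha\cdot e=1-[T]\cdot e\leq 0$ for every exceptional $e$.

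To rule out $\alpha\neq 0$, I would iteratively blow down. When $X$ is non-minimal I plan to select an exceptional class $e$ with $[T]\cdot e=1$ and symplectically contract its embedded $J$-holomorphic representative, which meets $T$ transversely at a single point. The resulting $\bar T\subset \bar X$ is again an embedded symplectic torus of positive square, and $\bar X-\bar T$ stays minimal because every exceptional class in $\bar X$ lifts to an exceptional class in $X$ pairing positively with $[T]$. After finitely many steps $\bar X$ is minimal rational, hence one of $\CC\PP^2$, $S^2\times S^2$, or $\mathbb{F}_n$ with $n\geq 2$. A direct computation in a standard basis---combining adjunction, $[\bar T]^2>0$, and nonnegativity of intersection against fiber, section, and negative-section classes---shows that only $[\bar T]=PD(c_1(\bar X))$ is possible, and in particular that $\bar X=\mathbb{F}_n$ with $n\geq 3$ cannot occur. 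Reversing the blow-downs via $[T]=\pi^*[\bar T]-E$ and $PD(c_1(X))=\pi^*PD(c_1(\bar X))-E$ then yields $PD([T])=c_1(X,\omega)$.

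The main obstacle I anticipate is guaranteeing, at each stage, the existence of an exceptional class $e$ with $[T]\cdot e=1$ rather than merely $\geq 1$, so that $T$ remains smooth after blow-down. I expect this to follow from combining the bound $[T]^2\leq 9$ (Lemma~\ref{lemma:constraint on D^2}), the non-positivity $\alpha\cdot e\leq 0$, and the adjunction identity $\sum_i b_i(b_i-1)=a(a-3)$ written in a standard basis $[T]=aH-\sum b_iE_i$, which together should preclude $[T]\cdot e\geq 2$ for every exceptional class. Failing a clean resolution, an alternative is to fix $J$ making $T$ holomorphic and invoke Taubes' theorem to obtain a $J$-holomorphic representative of $PD(c_1)$; one then argues that $T$ is a component of multiplicity one, and that the residual effective class vanishes because it is perpendicular to $[T]$ while $X-T$ contains no exceptional spheres.

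For the ``in particular'' statement, given a symplectic embedding of $D$ in a rational $(X,\omega)$ with minimal complement (which can be arranged by blowing down exceptional curves disjoint from $D$), smooth all nodes of $D$ to an embedded symplectic torus $T$ with $[T]=[D]$ and $[T]^2=\sum_i(s_i+2)\geq r>0$. For any exceptional class $e$: either $e=[C_i]$ for some component with $s_i=-1$, in which case $[D]\cdot e=s_i+2=1\neq 0$; or positivity forces $e\cdot[C_i]\geq 0$ for every $i$, so $e$ is $D$-good in the sense of Lemma~\ref{lem: existence of nice J-sphere}, and minimality of $X-D$ then gives $[D]\cdot e\geq 1$. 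Either way $[T]\cdot e\neq 0$, so the main statement applies and $PD([D])=c_1$, showing $D$ is anti-canonical. Rigidity follows at once because the same argument applies to every symplectic embedding of $D$ with minimal complement.
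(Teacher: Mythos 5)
Your reduction of the ``in particular'' statement to the torus statement is exactly the paper's argument: smooth $D$ to $T$ with $[T]^2=\sum(s_i+2)>0$, and use $D$-goodness via Lemma \ref{lem: existence of nice J-sphere} together with minimality of the complement to rule out an exceptional class $e$ with $[T]\cdot e=0$. That half is fine.

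The problem is the main claim $PD([T])=c_1(X,\omega)$, where you replace the paper's proof by a blow-down induction, and the step you yourself flag as an ``anticipated obstacle'' is a genuine gap, not a technicality. Your induction needs, at every non-minimal stage, an exceptional class $e$ with $[T]\cdot e=1$ exactly; positivity of intersections only gives $[T]\cdot e\ge 1$, and if the minimum over exceptional classes were $2$ the blow-down would produce an immersed, not embedded, torus and the induction would break. Establishing that some exceptional class meets $T$ once is essentially equivalent to pinning down $[T]$ in the lattice (a posteriori $[T]=-K$ pairs to $1$ with every $E_i$, but that is the conclusion, not a hypothesis), and the combination of $[T]^2\le 9$, $\alpha\cdot e\le 0$ and adjunction that you gesture at is not carried out; nor is the ``direct computation'' on the minimal models, nor the Taubes fallback (where you would still need to explain why $T$ occurs as a multiplicity-one component of a representative of $PD(c_1)$ and why the residual class, which merely satisfies $\alpha\cdot[T]=0$ and $\alpha^2<0$, must vanish). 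The paper avoids all of this with a two-line citation: since $X-T$ is minimal, it is a minimal symplectic filling of the link of a simple elliptic singularity of degree $k=[T]^2$ (Section 2 of \cite{OhOn03-simple-elliptic}), and Theorem 7.3(2) of \cite{OhOn03-simple-elliptic} then gives $PD([T])=c_1(X,\omega)$ directly. If you want a self-contained argument you must either close the ``$[T]\cdot e=1$'' step honestly or fall back on that classification result.

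A minor further point: rigidity quantifies over embeddings into arbitrary symplectic $4$-manifolds with minimal complement, so you should note (as the paper does via Lemma \ref{lemma:rational-embed}) that $b^+(Q_D)=1$ forces the ambient manifold to be rational before running your argument.
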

\begin{proof}
	By Lemma \ref{lemma:maximal=minimal complement}, the complement of $ T $ is minimal.
	By the discussion in Section 2 of \cite{OhOn03-simple-elliptic}, for $ [T]^2=k $ with $ k> 0 $, the minimal complement of $ T $ gives a minimal symplectic filling for simple elliptic singularity of degree $ k $. Then by Theorem 7.3 (2) in \cite{OhOn03-simple-elliptic}, $ PD([T])=c_1(X,\omega) $.

	Now let $ D $ be symplectically embedded in a symplectic rational surface $ (X,\omega ) $ such that its complement is minimal. We could symplectically smooth $ D $ to a symplectic torus $ T $ with $ [T]^2=[D]^2=\sum (s_i+2)>0 $. Suppose there exists exceptional class $ e $ such that $ [T]\cdot e =0$. Note that if $ s_i\ge 0 $, we have $ [C_i]\cdot e\ge 0 $. If $ s_i=-1 $, then $ [C_i] $ is an exceptional class. Note that $ [C_i]\neq e $ because $ [C_i]\cdot [D]=1 $. Then we have $ [C_i]\cdot e \ge 0 $ since both are stable classes and have positivity of intersection. So $ e $ is $ D- $good and by Lemma \ref{lem: existence of nice J-sphere} there is an almost complex structure $ J $ such that $ D $ is $ J- $holomorphic and $ e $ has an embedded $ J- $holomorphic representative $ E $. Since $ [D]\cdot e=[T]\cdot e =0$, $ D $ and $ E $ are disjoint, contradicting the minimality of the complement of $ D $.
\end{proof}

\begin{prop}\label{prop:anti-canonical}
	Any rationally embeddable circular spherical divisor with $b^+=1$ is anti-canonical.	
\end{prop}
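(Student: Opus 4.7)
The plan is to invoke the classification in Proposition \ref{prop:list}: every rationally embeddable $D$ with $b^+\ge 1$ is toric equivalent to a member of one of six explicit families, and since toric equivalence preserves the anti-canonical property (Lemma \ref{lemma:toric eq preserves}), it suffices to exhibit an anti-canonical realization of one representative in each family.

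For family (5), where $s_i\ge -1$ for every $i$, the conclusion is already the content of Lemma \ref{lemma:s_i>=-1}, whose proof smooths $D$ to a symplectic torus $T$ with $[T]^2>0$ and applies the Ohta--Ono theorem after using Lemma \ref{lem: existence of nice J-sphere} to verify that the complement of $T$ is minimal. For family (4), the hypothesis that $D$ is rationally embeddable combined with Lemma \ref{lemma:hyperbolic sequence embeddable} forces $D$ to be blown-up, and that same lemma then delivers anti-canonicity.

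For the remaining families (1), (2), (3) and (6), the plan is to realize each divisor as the proper transform obtained from a finite sequence of non-toric blow-ups applied to one of the explicit anti-canonical pairs listed in Theorem \ref{thm:minimal model}. The proof of Lemma \ref{lemma:toric eq preserves} already shows, via the identity $\iota_*(-K_X)-[E]=-K_{X'}$, that non-toric blow-up preserves the anti-canonical property, so producing such a chain suffices. Concretely: the divisors $(1,1,p)$ with $p\le 1$ arise by $1-p$ non-toric blow-ups at smooth points on the third component of the pair $(1,1,1)$ from case $(B3)$; the divisors $(1,p)$ with $p\le 4$ arise by $4-p$ non-toric blow-ups on the $(+4)$-component of $(1,4)$ from case $(B2)$; the divisors $(0,p)$ with $p\le 4$ arise analogously from the pair $(2s,f)$ of case $(D2)$, whose self-intersection sequence is $(4,0)$; and the length-two divisors $(-1,-2)$ and $(-1,-3)$ are reached by a chain of non-toric blow-ups starting from the pair $(2,2)$ of case $(C2)$ with $b=1$, alternately blowing up smooth points of the two components until the desired self-intersections are attained.

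The main technical point to verify is that at each non-toric blow-up the prescribed self-intersection sequence is achieved and the resulting configuration remains a smooth symplectic cycle of spheres; this is automatic once each blow-up is performed at a smooth point of the chosen component, since such a blow-up decreases the self-intersection of exactly that component by one and leaves the cyclic structure intact. I expect the main obstacle to be careful bookkeeping across family (3), where one must first confirm that the anti-canonical pair with self-intersection sequence $(0,4)$ genuinely appears in Theorem \ref{thm:minimal model} (it does, namely as $([C_1],[C_2])=(2s,f)$ in case $(D2)$, with $(2s)^2=4$ and $(2s)\cdot f=2$) before invoking the blow-up chain.
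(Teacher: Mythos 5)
Your proposal is correct and follows essentially the same route as the paper: reduce to the six families of Proposition \ref{prop:list}, handle (4) by Lemma \ref{lemma:hyperbolic sequence embeddable} and (5) by Lemma \ref{lemma:s_i>=-1}, and realize (1), (2), (3), (6) as non-toric blow-ups of the pairs (B3), (B2), (D2), (C2) from Theorem \ref{thm:minimal model}. The only caveat is your opening phrase ``one representative in each family'' --- since the families are parametrized by $p$ and distinct values give inequivalent divisors, you must (and in fact do) exhibit a blow-up chain for every member, not just one; your explicit bookkeeping already accomplishes this.
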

\begin{proof}
	It suffices to show that any rationally embeddable circular spherical divisor listed in Proposition \ref{prop:list} is anti-canonical.
	
	(4) is anti-canonical by Lemma \ref{lemma:hyperbolic sequence embeddable} and (5) is anti-canonical by Lemma \ref{lemma:s_i>=-1}.
	
	(1),(2) and (3) are realized respectively as non-toric blow-ups of divisors (B3),(B2) and (D2) in Theorem \ref{thm:minimal model}.
	
	(6) is also realized as non-toric blow-ups of divisors (B2) or (C2) or (D2) in Theorem \ref{thm:minimal model}.
\end{proof}

\subsection{Anti-canonical $D$ is rigid}\label{subsection:anti-canonical = rigid}

\begin{lemma}\label{lemma:(-1,-3) rigid}
	$ (-1,-3) $ and $ (-1,-2) $ are rigid.
\end{lemma}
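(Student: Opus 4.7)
The plan is to reduce both cases to Lemma \ref{lemma:s_i>=-1} by symplectically blowing down the $(-1)$-component $C_1$ of $D$ and smoothing the resulting node on the image of $C_2$.

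First I would verify that $b^+(Q_D) = 1$ in both cases, so Lemma \ref{lemma:rational-embed} forces $X$ to be a symplectic rational surface, and since $X$ contains the $(-1)$-sphere $C_1$, one has $X \cong \CC\PP^2\# l\overline{\CC\PP}^2$. Blowing down $\pi: X \to X'$ along $C_1$ descends $C_2$ to an immersed symplectic sphere $C_2'$ with one transverse node (coming from the two points of $C_1 \cap C_2$) and self-intersection $[C_2]^2 + (C_1 \cdot C_2)^2 = [C_2]^2 + 4$. Smoothing the node yields an embedded symplectic torus $T' \subset X'$ with $[T'] = [C_2']$ and $[T']^2 \in \{1, 2\}$, which is positive as required by Lemma \ref{lemma:s_i>=-1}.

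The main obstacle is verifying the other hypothesis of Lemma \ref{lemma:s_i>=-1}, that $[T']\cdot e' \ne 0$ for every exceptional class $e' \in H_2(X')$. Suppose some exceptional $e' \in H_2(X')$ satisfies $e' \cdot [T'] = 0$. Under the orthogonal splitting $H_2(X) = H_2(X') \oplus \ZZ E$ with $E = [C_1]$, the class $e'$ viewed in $H_2(X)$ remains exceptional, because $K_X = \pi^*K_{X'} + E$ and $e' \cdot E = 0$. Since $[C_2] = \pi^*[C_2'] - 2E$, we compute $e' \cdot [C_1] = 0$ and $e' \cdot [C_2] = e' \cdot [C_2'] = 0$, so $e'$ is a $D$-good exceptional class in $X$. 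Then Lemma \ref{lem: existence of nice J-sphere} produces an embedded $J$-holomorphic representative of $e'$ disjoint from $D$, contradicting the minimality of $X - D$.

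With both hypotheses of Lemma \ref{lemma:s_i>=-1} satisfied, I conclude $[T'] = -K_{X'}$, hence $[C_2'] = -K_{X'}$. Pulling back and using $K_X = \pi^*K_{X'} + E$,
\[
[D] = [C_1] + [C_2] = E + (\pi^*[C_2'] - 2E) = \pi^*(-K_{X'}) - E = -K_X,
\]
so $(X, D, \omega)$ is a symplectic Looijenga pair, proving that both $(-1, -2)$ and $(-1, -3)$ are rigid. The subtle step is the verification of $D$-goodness: one lifts the exceptional class from $X'$ back to $X$ in order to apply the McDuff-Opshtein lemma to the embedded divisor $D$, rather than attempting to work directly with the immersed curve $C_2'$ in $X'$.
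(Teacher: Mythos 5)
Your proof is correct, but it takes a genuinely different route from the paper's. The paper smooths $D$ inside $X$ itself, producing a torus of square $[D]^2$, which is $0$ for $(-1,-3)$ and $1$ for $(-1,-2)$; the square-zero case then needs a bespoke argument (showing $K_X^2\le 0$, hence $X\cong\CC\PP^2\#l\overline{\CC\PP}^2$ with $l\ge 9$; for $l\ge 10$ invoking Proposition 3.14 and Theorem 3.21 of \cite{LiZh11-relative} to produce pairwise orthogonal exceptional classes $e_1,\dots,e_{l-9}$ orthogonal to $[T]$; applying the light cone lemma to $[T]$ and the auxiliary class $-\tilde K=-K_X+e_1+\dots+e_{l-9}$; and finally showing each $e_i$ is $D$-good to contradict minimality). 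By blowing down $C_1$ first you raise the square of the resulting torus by one ($[T']^2=[C_2]^2+4$ versus $[D]^2=[C_2]^2+3$), which pushes both cases into the $[T]^2>0$ regime where Lemma \ref{lemma:s_i>=-1} (i.e.\ the Ohta--Ono input) applies directly, and which treats the two divisors uniformly. The price is the bookkeeping needed to transfer the exceptional-class condition between $X'$ and $X$, which you handle correctly: under the splitting $H_2(X)=H_2(X')\oplus\ZZ E$ the lift of $e'$ has square $-1$, $K_X\cdot e'=K_{X'}\cdot e'=-1$ and positive area, so it is an exceptional class of $(X,\omega)$, and $e'\cdot[C_1]=e'\cdot[C_2]=0$ makes it $D$-good, so Lemma \ref{lem: existence of nice J-sphere} yields the contradiction with minimality of $X-D$. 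What the paper's route buys in exchange is an explicit identification of the ambient manifold ($l=9$, resp.\ $l=8$) along the way; what yours buys is brevity and the avoidance of both the square-zero light cone argument and Theorem 3.21 of \cite{LiZh11-relative}. One small point worth making explicit: the fact that the blow-down image of $C_2$ is an immersed sphere with a single transverse node (so that smoothing yields an embedded torus of the same square) is exactly the step already carried out in the paper's proof of Lemma \ref{lemma:rational-embed}, so you may simply cite that.
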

\begin{proof}
	By Lemma \ref{lemma:rational-embed}, we may suppose $ D=(-1,-3) $ is a symplectic circular spherical divisor in a symplectic rational surface $ X $. Denote the components of $ D $ by $ A,B $, where $ [A]^2=-1,[B]^2=-3 $. Smooth $ D $ to get a symplectic torus $ T $ with $ [T]^2=0 $. Note that we must have $ K_X^2 \le 0 $. Otherwise the subspace in $ H_2(X;\ZZ ) $ spanned by $ 3[A]+2[B] $ and $ -K_{X} $ has intersection form $ \begin{pmatrix}
	3 & 1\\ 1 & (-K_{X})^2
	\end{pmatrix} $ and is positive definite, contradicting $ b^+(X)=1 $. Here $ (3[A]+2[B])\cdot (-K_X)=1 $ because $ (-K_X)\cdot [A]=[A]^2+2=1 $ and $ (-K_X)\cdot [B]=[B]^2+2=-1 $ by adjunction formula. So we have that $ X $ must be $ \CC\PP^2\#l\overline{\CC\PP}^2 $ with $ l\ge 9 $.
	
	When $ l=9 $, we have that $ [T]^2=K_{X}^2=0 $ and $ K_{X}\cdot [T]=0 $. By light cone lemma, we have $ [T] $ is proportional to $ -K_{X} $. Since $ [T]\cdot [A]=([A]+[B])\cdot [A]=1=-K_{X}\cdot [A] $, we actually have $ [D]=[T]=-K_{X} $, i.e. $ D $ is anti-canonical.
	
	When $ l\ge 10 $, suppose $ [T]\cdot e\neq 0 $ for all exceptional class $ e $. By Proposition 3.14 of \cite{LiZh11-relative}, we must have $ (K_{X}+[T])^2\ge 0 $. However, $ (K_{X}+[T])^2=K_{X}^2+2K_{X}\cdot [T] + [T]^2=K_{X}^2=9-l<0 $. So there exists an exceptional class orthogonal to $ [T] $. By Theorem 3.21 of \cite{LiZh11-relative}, exceptional classes orthogonal to $ [T] $ are pairwise orthogonal and above discussion implies that there are at least $ l-9 $ such exceptional classes. Denote them by $ e_1,\dots ,e_{l-9} $.
	
	Now consider the blowup class $ \tilde{K}=K_{X}-e_1-e_2 -\dots - e_{l-9} $. 
	Then we have \[
	[T]^2=0, \quad (-\tilde{K})^2=0, \quad \text{ and }[T]\cdot (-\tilde{K})=0.
	\]
	By the light cone lemma, we have $ [T] $ and $ -\tilde{K} $ are proportional. Pairing both with $ [A] $, we have $ [T]\cdot [A]=1=(-\tilde{K})\cdot [A] $. Therefore we conclude that $ [T]=-\tilde{K}=-K_{X_l}+e_1+\dots + e_{l-9} $.
	
	Now we have \begin{align*}
	1=([A]+[B])\cdot [A]=[T]\cdot [A]=(-K_{X_l}+e_1+\dots + e_{l-9})\cdot [A]=1+\sum e_i\cdot [A].
	\end{align*}
	Since both $ e_i,[A] $ are stable classes, we must have $ e_i\cdot [A]\ge 0 $ and thus $ e_i\cdot [A]=0 $ for all $ i $. This implies $ e_i\cdot [B]=e_i\cdot ([T]-[A])=0 $. So each $ e_i $ is $ D- $good and has an embedded symplectic representative in the complement of $ D $ by Lemma \ref{lem: existence of nice J-sphere}.
	
	So $ D $ has minimal complement only if $ l=9 $, where $ D $ is anti-canonical.

	The proof for $(-1,-2)$ follows the exact same line as the proof for $ (-1,-3) $. Suppose $ D=(-1,-2) $ is a symplectic circular spherical divisor embedded in a symplectic rational surface $ (X,\omega) $ and $ T $ is the symplectic torus we get from smoothing $ D $. Note that we must have $ K_X^2\le 1 $. Otherwise the subspace spanned by $ [T] $ and $ -K_{X} $ has intersection form $ \begin{pmatrix}
		1 & 1\\ 1 & (-K_X)^2
		\end{pmatrix} $ and is positive definite, contradicting $ b^+(X_l)=1 $. So $ X $ must be $ \CC\PP^2\#l\overline{\CC\PP}^2 $ with $ l\ge 8 $. We could show that $ D $ has minimal complement only if $ l=8 $. When $ D $ has minimal complement, the symplectic torus $ T $ we get from smoothing $ D $ pairs nontrivially with any exceptional class and $ [T]^2=1>0 $. By Lemma \ref{lemma:s_i>=-1}, $ T $ represents $ c_1(X,\omega ) $, i.e. $ PD([D])=c_1(X,\omega) $.
\end{proof}
\begin{prop}\label{prop:rigid}
	Anti-canonical circular spherical divisors with $ b^+=1 $ are rigid.
\end{prop}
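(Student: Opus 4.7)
By Lemma~\ref{lemma:toric eq preserves}, rigidity is invariant under toric equivalence, so it suffices to verify it on each family listed in Proposition~\ref{prop:list}. Families (4), (5), (6) are already handled by Lemma~\ref{lemma:hyperbolic sequence embeddable}, Lemma~\ref{lemma:s_i>=-1} and Lemma~\ref{lemma:(-1,-3) rigid} respectively. The plan is to dispatch the remaining families (1) $(1,1,p)$ with $p\le 1$, (2) $(1,p)$ with $p\le 4$, and (3) $(0,p)$ with $p\le 4$ by adapting the strategy used for $(-1,-3)$ in Lemma~\ref{lemma:(-1,-3) rigid}. In each family the subcase $p\ge -1$ already falls under Lemma~\ref{lemma:s_i>=-1}, so the essential case is $p\le -2$.

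Given such a $D$ symplectically embedded in $(X,\omega)$ with minimal complement, Lemma~\ref{lemma:rational-embed} reduces to the case $X = \CC\PP^2 \# l\overline{\CC\PP}^2$ (the $S^2\times S^2$ case being analogous). I would first symplectically smooth $D$ to an embedded symplectic torus $T$ with $[T] = [D]$, so that by adjunction $-K_X\cdot [T] = [T]^2$. Since $b^+(X) = 1$, the real subspace of $H_2(X;\mathbb{R})$ spanned by $[T]$ and $-K_X$ cannot be positive definite; in each of the three families this gives a sharp lower bound $l\ge l_0$, where $l_0$ is the number of blow-ups needed to realize $D$ as an anti-canonical divisor in its toric-minimal model from Theorem~\ref{thm:minimal model}.

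Next, assume toward contradiction that $[T]\cdot e \ne 0$ for every exceptional class $e$. Proposition~3.14 of \cite{LiZh11-relative} then gives $(K_X+[T])^2\ge 0$; combined with $-K_X\cdot[T] = [T]^2$ and the bound above, this forces $l = l_0$, and the light cone lemma applied to the (isotropic or negatively-paired) pair $[T], -K_X$ pins down $[T] = -K_X$ after pairing with the distinguished stable component $C_1$ (with $[C_1]^2\in\{0,1\}$) and using $-K_X\cdot [C_1] = [C_1]^2 + 2$. Otherwise $l > l_0$ and some exceptional class is orthogonal to $[T]$; Theorem~3.21 of \cite{LiZh11-relative} then produces $l-l_0$ pairwise orthogonal such exceptional classes $e_1,\dots,e_{l-l_0}$. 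The blow-down class $\tilde K = K_X - \sum e_j$ satisfies $\tilde K^2 = 0 = \tilde K\cdot [T]$ and, by the light cone lemma, $[T]$ is proportional to $-\tilde K$; pairing again with $C_1$ gives $[T] = -\tilde K$.

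The main obstacle will be showing that each $e_j$ produced in this way is $D$-good, i.e.\ $e_j\cdot [C_i]\ge 0$ for every component $C_i$. This is automatic for stable components $C_i$ (those with $[C_i]^2\ge 0$) by positivity of intersection. The subtle case is the unique component $C_r$ with $[C_r]^2 = p\le -2$: here one uses $[T] = \sum [C_i]$ and the computation
\[ 0 = e_j\cdot [T] = e_j\cdot [C_r] + \sum_{i\ne r} e_j\cdot [C_i] \]
together with the non-negativity of the stable-component terms to force $e_j\cdot [C_r] = 0$. With $D$-goodness established, Lemma~\ref{lem: existence of nice J-sphere} produces an embedded exceptional sphere disjoint from $D$, contradicting minimality of the complement. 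This forces $l = l_0$ and hence $[D] = -K_X$, completing the proof.
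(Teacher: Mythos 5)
Your reduction via Lemma~\ref{lemma:toric eq preserves} and Proposition~\ref{prop:list}, and your treatment of families (4), (5), (6), coincide with the paper's. The problem is your unified light-cone argument for families (1)--(3), which has two genuine gaps. First, the $D$-goodness step is logically backwards: from $0=e_j\cdot[T]=e_j\cdot[C_r]+\sum_{i\ne r}e_j\cdot[C_i]$ and $e_j\cdot[C_i]\ge 0$ for the stable components you can only conclude $e_j\cdot[C_r]\le 0$, which is the wrong inequality for $D$-goodness; nothing you have written rules out $e_j\cdot[C_1]>0$ and $e_j\cdot[C_r]<0$. In the paper's proof of Lemma~\ref{lemma:(-1,-3) rigid} the missing input is the identity $[T]=-\tilde K$: pairing it with the component $A$ gives $\sum_j e_j\cdot[A]=[T]\cdot[A]-(-K_X)\cdot[A]=0$, and since each summand is $\ge 0$ they all vanish, whence $e_j\cdot[B]=e_j\cdot([T]-[A])=0$. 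You must run the argument in that order. Second, the light-cone/$\tilde K$ machinery is calibrated to $[T]^2=0$, but in families (1)--(3) the smoothed torus has $[T]^2=[D]^2=p+8$, $p+5$, $p+4$ respectively, which ranges over positive, zero, and arbitrarily negative values. When $[T]^2<0$ the light cone lemma does not apply to $[T]$, the ``span of $[T]$ and $-K_X$ is not positive definite'' condition is vacuous (so no bound $l\ge l_0$), and $\tilde K$ cannot be arranged to satisfy $\tilde K^2=0=\tilde K\cdot[T]$. When $[T]^2>0$ one instead has $\tilde K\cdot[T]=-[T]^2\ne 0$, so proportionality of $[T]$ and $-\tilde K$ does not follow either; that regime needs the Ohta--Ono input of Lemma~\ref{lemma:s_i>=-1}, which again runs into the first gap when some component has square $\le -2$.

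The paper does not attempt a unified argument here: family (3) is quoted from Theorem 3.5 of \cite{GoLi14}, family (2) with $p\le -2$ from Theorem 3.1 of \cite{GoLi14} (the $p\ge -1$ subcases reduce to (5), as you note), and family (1) is handled by a short direct homological computation rather than by smoothing. For $(1,1,p)$ one uses Lemma~\ref{lemma:topological cyclic} and \cite{Mc90-structure} to write $[C_1]=[C_2]=h$ and $[C_3]=h-\sum a_ie_i$ in a standard basis of $H_2(\CC\PP^2\#l\overline{\CC\PP}^2)$; adjunction forces each $a_i\in\{0,1\}$, and if some $a_k=0$ then $e_k$ pairs to zero with every component, hence is $D$-good and yields an exceptional sphere in the complement, contradicting minimality. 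So all $a_i=1$ and $[D]=3h-\sum e_i=-K_X$. A similar explicit-basis computation (not the light cone lemma) is what makes families (2) and (3) tractable.
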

\begin{proof} 
	It suffices to show anti-canonical circular spherical divisors of forms listed in Proposition \ref{prop:list}	are rigid.
	
	(6) is rigid by 
Lemma \ref{lemma:(-1,-3) rigid}.
	
	(5) is rigid by Lemma \ref{lemma:s_i>=-1}.
	
	(4) is rigid by Lemma \ref{lemma:hyperbolic sequence embeddable}.
	
	(3) follows from Theorem 3.5 in \cite{GoLi14}. 
	
	The case of $p\leq -2$ in (2) follows from Theorem 3.1 in \cite{GoLi14}, while the 
	remaining divisors in (2) have $s_i\ge -1,\forall i$ and follows from (5). 
	
	Now we want to show the divisor $ (1,1,p) $ is rigid. By \cite{Mc90-structure}, $ (X,\omega ) $ is rational and it must be $ \CC\PP^2\# l\overline{\CC\PP}^2 $. By (4) and (5) of Lemma \ref{lemma:topological cyclic} together with \cite{Mc90-structure}, we may assume that $ [C_1]=[C_2]=h $ and $ [C_3]=h-\sum a_ie_i $, where $ \{h,e_1,\dots,e_l \} $ is a basis of $ H_2(X;\ZZ) $. Adjunction formula for $ C_3 $ says $ \sum (a_i^2-a_i)=0 $, which implies each $ a_i $ is $ 0 $ or $ 1 $. If $ a_k=0 $ for some $ k $, then $ [C_j]\cdot e_k=0 $ for $ j=1,2,3 $. Then $ e_k $ is $ D- $good and there is a symplectic sphere representing $ e_k $ in the complement of $ D $, contradicting the minimality.
\end{proof}

\section{Embeddability and rigidity when $b^+(D)=0$}
This section is devoted to the study of embeddability and rigidity of negative semi-definite circular spherical divisors. The main result of this section is that all such divisors are symplectically embeddable and not rigid. 
We also classify anti-canonical strictly negative semi-definite divisors up to toric equivalence and show that the condition $b^+\ge 1$ in Theorem \ref{thm:embeddable=rigid} is sharp.
\subsection{Strictly negative semi-definite cases}\label{section:non-example}
In this subsection, we study strictly negative semi-definite circular spherical divisors, which means they are negative semi-definite but not negative definite. Recall that by Lemma \ref{lemma:toric eq preserves}, being symplectically embeddable, rationally embeddable, anti-canonical and rigid are all preserved by toric equivalence, so it suffices to consider toric minimal divisors. By Lemma \ref{lem: not negative semi-definite => at least one non-negative} any strictly negative semi-definite divisor is toric equivalent to $D_1=(-1,-4)$ or \[ D_n=(\underbrace{-2,\dots,-2}_{n}), n\ge 2. \]

We first introduce another convenient operation on circular spherical divisors.
\begin{definition}
	Let $D=(s_1,\dots,s_r)$ be a circular spherical divisor. Then $D'=(s_1',\dots,s_{r-1}')$ is a smoothing of $D$ at the intersection $(s_i,s_{i+1})$ if $s_j'=s_j$ for $j<i$, $s_j'=s_{j+1}$ for $j>i$, and $s_i'=s_i+s_{i+1}+2$.
\end{definition}
For a symplectic circular spherical divisor $D=\cup C_i \subset (X,\omega)$, we can symplectically smooth the intersection point of $C_i$ and $C_{i+1}$ to get a symplectic embedding of $D'$ into the same symplectic 4-manifold $(X,\omega)$ (\cite{LiUs06-negative-inflation}). Clearly smoothing of a symplectically embeddable/rationally embeddable divisor is still symplectically embeddable/rationally embeddable. Also because the symplectic smoothing does not change the total homology class, i.e. $[D]=[D']\in H_2(X;\ZZ)$, being anti-canonical is also preserved. Note that any smoothing of $D_{n+1}$ is $D_{n}$ for $n\ge 2$, which makes smoothing a convenient reduction tool in the negative semi-definite case.

To find symplectic embeddings of $D_n$, we also need to recall some basic facts of elliptic surfaces (\cite{Barth15-compact}). An elliptic fibration of a complex surface $X$ is a proper holomorphic map $f:X\to S$ to a smooth curve $S$, such that the general fiber is a non-singular elliptic curve. An elliptic surface is a surface admitting an elliptic fibration. 
Following the classification of singular fibers by Kodaira, a singular fiber of $I_n$ type is a cycle of $n$ self-intersection $(-2)$ rational curves for $n\ge 2$, and a rational nodal curve with self-intersection $0$ for $n=1$. 
We restrict ourselves to elliptic fibrations that are relatively minimal (i.e. fibers contain no $ (-1) $-curves), have at least one singular fiber and have a global section.
Such an elliptic fibration $f:X\to S$ has Kodaira dimension $\text{kod}(X)=-\infty$ if and only if $\chi(\mc{O}_X)+2g(S)-2<0$ (Proposition 12.5 in \cite{Barth15-compact}). Here the holomorphic Euler characteristic $ \chi(\mc{O}_X)$ is related to topological Euler characteristic $ \chi(X) $  by $\chi(\mc{O}_X)=\chi(X)/12 $ by (12.5) in \cite{Kod-surface3}. The elliptic surface $X$ has even first Betti number $b_1(X)=2g(S)$ (Corollary 5.2.2 in \cite{CD-Enriques}), and in particular $X$ is K\"ahler by \cite{Miy-kahler}. As a K\"ahler surface, it is not symplectically rational or ruled if and only if it has non-negative Kodaira dimension, since symplectic and holomorphic Kodaira dimensions coincide (\cite{Li06-kod-0}).

\begin{lemma}\label{lemma:D_n symp embeddable}
	Let $D$ be a strictly negative semi-definite circular spherical divisor. Then $D$ is symplectically embeddable. In particular, $D$ symplectically embeds into a symplectic 4-manifold which is not symplectically rational and is thus not rigid.
\end{lemma}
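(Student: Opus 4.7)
The plan is to reduce to a finite list of model divisors via toric equivalence and realize each one as (a resolution of) a Kodaira singular fiber in a non-rational elliptic surface. By Lemma \ref{lemma:toric eq preserves} both symplectic embeddability and non-rigidity are preserved under toric equivalence, and by Lemma \ref{lem: not negative semi-definite => at least one non-negative} every strictly negative semi-definite circular spherical divisor is toric equivalent to $D_1=(-1,-4)$ or $D_n=(\underbrace{-2,\ldots,-2}_n)$ with $n\ge 2$, so it suffices to embed each of $D_1$ and $D_n$ into a symplectic $4$-manifold that is not symplectically rational.

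For $n\ge 2$, the divisor $D_n$ is by definition the dual graph of a Kodaira singular fiber of type $I_n$, so the plan is to exhibit a relatively minimal elliptic fibration $f:X\to\CC\PP^1$ with an $I_n$ fiber and with $\chi(\mathcal{O}_X)\ge 2$. A Weierstrass equation $y^2=x^3+A(t)x+B(t)$ with $\deg A=4k$, $\deg B=6k$, $k\ge 2$, whose discriminant vanishes to order $n$ at a single point yields such a fibration after minimal resolution whenever $n\le 12k$, which we arrange by taking $k$ large. Since $\chi(\mathcal{O}_X)+2g(\CC\PP^1)-2=k-2\ge 0$, the review preceding this lemma gives $\text{kod}(X)\ge 0$, so $X$ is not symplectically rational and $D_n$ embeds symplectically as the chosen $I_n$ fiber.

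For $D_1$ I would take any K\"ahler elliptic surface $X$ with $\text{kod}(X)\ge 0$ carrying an $I_1$ singular fiber $F$, for instance a generic elliptic K3. Then $F$ is a rational nodal sphere of self-intersection $0$; symplectically blowing up at its node produces $\tilde X=X\#\overline{\CC\PP}^2$, in which the proper transform $\tilde F$ has self-intersection $-4$ and meets the exceptional sphere $E$ transversally in two points. The cycle $\tilde F\cup E\subset \tilde X$ is therefore a symplectic circular spherical divisor with intersection sequence $(-1,-4)=D_1$, and since blow-up preserves Kodaira dimension, $\tilde X$ remains non-rational.

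In each case the ambient manifold is non-rational, so by the argument of Remark \ref{rmk:not rigid} I can blow down exceptional spheres in the complement of $D$ to obtain a minimal complement whose ambient manifold stays non-rational; this rules out any anti-canonical embedding and therefore rigidity. The only technical point in the plan is ensuring existence of $I_n$ fibers on an elliptic surface of arbitrary $\chi(\mathcal{O}_X)$, which the Weierstrass construction above makes completely explicit.
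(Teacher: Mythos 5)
Your overall strategy is the same as the paper's: reduce to $D_1$ and $D_n$ by toric equivalence, realize $D_n$ as an $I_n$-type configuration in an elliptic surface over $\CC\PP^1$ with $\chi(\mathcal{O}_X)\ge 2$ (hence non-negative Kodaira dimension, hence not symplectically rational), obtain $D_1=(-1,-4)$ by blowing up the node of an $I_1$ fiber, and conclude non-rigidity from Remark \ref{rmk:not rigid}. The $D_1$ step and the non-rigidity step are correct and identical to the paper's.

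The gap is in your existence claim for the $I_n$ fiber. You assert that a Weierstrass model with $\deg A=4k$, $\deg B=6k$ carries an $I_n$ fiber ``whenever $n\le 12k$,'' justified by the degree of the discriminant. This is false: the Shioda--Tate formula gives $\rho(X)=2+\sum_v(m_v-1)+\operatorname{rank}\mathrm{MW}\le h^{1,1}(X)=10k$ for such a surface, so an $I_n$ fiber forces $n\le 10k-1$, not $n\le 12k$ (already for $k=1$ a rational elliptic surface admits $I_9$ but not $I_{10}$, even though $\deg\Delta=12$). In other words, it is simply impossible to choose $A,B$ of those degrees with $4A^3+27B^2$ vanishing to order $n>10k-1$ at a point where $A\neq 0$, so ``completely explicit'' is an overstatement — no equation is exhibited, and the naive degree count that is offered in its place gives the wrong range. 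Your escape hatch of ``taking $k$ large'' does save the strategy, since for $k\ge (n+1)/10$ such surfaces do exist, but that existence still has to come from somewhere: either cite a realization result (the paper cites Shioda's elliptic modular surfaces, which achieve the extremal fiber $I_{10m-1}$ for $\chi(\mathcal{O}_X)=m$, and then symplectically smooths nodes of the cycle to descend from $D_{10m-1}$ to $D_n$ for every $n\le 10m-1$), or produce an explicit minimal Weierstrass/Tate equation with an $I_n$ fiber and verify minimality at infinity, which your sketch does not do. With that existence step supplied, the rest of your argument goes through.
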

\begin{proof}
	It suffices to consider $D_n$ with $n\ge 1$. 
	For each $n\ge 2$, we can find $m\ge 1$ with $10m-1\ge n$. By Theorem 2.3 of \cite{Shioda-elliptic}, for each $m\ge 1$, there exists a minimal elliptic surface $X$ over $\PP^1$, with a maximal singular fiber of type $I_{10m-1}$, $2m+1$ singular fibers of type $I_1$. This $X$ is an elliptic modular surface by Theorem 2.4 of \cite{Shioda-elliptic} and has a global section by Definition 4.1 of \cite{Shioda-modular}. It has $\chi(\mc{O}_X)=\chi(X)/12=[\dfrac{2m+1}{2}]\ge 2$ by Theorem 2.2 in \cite{Shioda-elliptic} and thus non-negative Kodaira dimension as discussed above. Equip $X$ with a K\"ahler form $\omega$. Then $(X,\omega)$ is not symplectically rational or ruled and is symplectically minimal.
	The $I_{10m-1}$ singular fiber gives a symplectic embedding of $D_{10m-1}$. Symplectically smoothing $D_{10m-1}$ gives a symplectically embedding of $D_n$ into $(X,\omega)$. 


	Note that a singular fiber of type $I_1$ in $X$ is an immersed symplectic sphere with one nodal point and self-intersection $0$. Blowing up the nodal point gives a symplectic embedding of $D_1=(-1,-4)$ into $X\# \overline{\CC\PP}^2$ with some blow-up symplectic form. 
\end{proof}


\begin{prop}\label{prop:s_n}
	The circular spherical divisor $D_n$ is anti-canonical if $n\le 9$ and is not anti-canonical if $n\ge 10$. $D_n$ is not rigid for all $n\ge 1$.
\end{prop}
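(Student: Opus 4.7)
The plan is to handle the three pieces of the statement separately, reducing everywhere to the toric minimal representatives $D_n$ via Lemma \ref{lemma:toric eq preserves}.

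For $n \le 9$, I would exhibit $D_n$ as anti-canonical directly. Starting with an extremal rational elliptic surface $X = \CC\PP^2 \# 9\overline{\CC\PP}^2$ that admits an $I_9$ singular fiber, the fiber class equals $-K_X$, realising $D_9$ as an anti-canonical cycle of $(-2)$-spheres. Successive symplectic smoothings of intersection points (as introduced at the start of Section \ref{section:non-example}) preserve anti-canonicality since they preserve the total class $[D]$, so this cascades down to give $D_8,\ldots,D_2$ all anti-canonical in the same $X$. For $D_1 = (-1,-4)$, I would take a rational elliptic surface with an $I_1$ nodal fiber $C$ and symplectically blow up the node: the exceptional $(-1)$-sphere $E$ together with the proper transform $C'$ (self-intersection $-4$, meeting $E$ in two points) forms $D_1$, and the computation $-K_{X'} = \pi^*(-K_X) - [E] = ([C'] + 2[E]) - [E] = [C'] + [E]$ exhibits this pair as anti-canonical in $\CC\PP^2 \# 10\overline{\CC\PP}^2$.

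For $n \ge 10$, the argument is a linear algebra bound. Suppose for contradiction $D_n$ is anti-canonical in some rational $(X,\omega)$. Adjunction applied to each $(-2)$-sphere component yields $K_X \cdot [C_i] = 0$, and summing gives $K_X^2 = 0$; hence $X$ is diffeomorphic to $\CC\PP^2 \# 9\overline{\CC\PP}^2$, and $K_X$ is a nonzero null class in the signature $(1,9)$ lattice $H_2(X;\QQ)$. Let $V \subset H_2(X;\QQ)$ be the subspace spanned by $\{[C_1],\ldots,[C_n]\}$. Any linear relation $\sum a_i [C_i] = 0$ would force the coefficient vector into the one-dimensional radical of $Q_{D_n}$ spanned by $(1,\ldots,1)$, but this would make $\sum a_i [C_i]$ a nonzero scalar multiple of $-K_X \ne 0$, a contradiction; hence $\dim V = n$. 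Since every $[C_i]\in K_X^\perp$, the induced map $V/\langle K_X \rangle \hookrightarrow K_X^\perp/\langle K_X \rangle$ is an injection into a negative definite rank-$8$ space, forcing $n-1 \le 8$ and contradicting $n \ge 10$.

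For non-rigidity of every $D_n$ with $n \ge 1$, I would invoke Lemma \ref{lemma:D_n symp embeddable}, which produces a symplectic embedding of $D_n$ into a symplectic $4$-manifold $(X,\omega)$ of non-negative Kodaira dimension, in particular non-rational. Blowing down exceptional spheres in the complement of $D_n$ preserves non-rationality of the ambient manifold, so after finitely many such operations one obtains a symplectic embedding of $D_n$ into a non-rational $4$-manifold with minimal complement. By Remark \ref{rmk:not rigid}, such an embedding prevents $D_n$ from being rigid.

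The main technical hurdle is the $n \ge 10$ bound: the argument crucially exploits the non-triviality of $-K_X$ (even though $-K_X$ lies in the radical of $Q_{D_n}$) to guarantee that the $[C_i]$ are linearly independent, so that the $(n-1)$-dimensional quotient $V/\langle K_X \rangle$ cannot fit inside the rank-$8$ quotient $K_X^\perp/\langle K_X \rangle$. The other two pieces are either explicit constructions (rational elliptic surfaces with $I_n$ fibers, symplectic smoothing, blow-up of nodal points) or direct consequences of already established results.
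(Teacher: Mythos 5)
Your proposal is correct, but it reaches the conclusion by a genuinely different route in both main parts. For $n\le 9$ the paper exhibits each $D_n$ as a (toric and non-toric) blow-up of one of the minimal models in Theorem \ref{thm:minimal model} --- $D_1,D_2$ from $(1,4)$, $D_3$ from $(1,1,1)$, and $D_4,\dots,D_9$ from $(0,0,0,0)$ --- whereas you start from a rational elliptic surface with an $I_9$ fiber and cascade down by symplectic smoothing; both are valid, and your route has the small advantage of placing $D_2,\dots,D_9$ anti-canonically in a single ambient $\CC\PP^2\#9\overline{\CC\PP}^2$ at once, at the cost of invoking the existence of the extremal fibration with an $I_9$ fiber. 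The more substantial divergence is in the obstruction for $n\ge 10$: the paper uses the charge $q(D)=12-n$, the fact that non-toric blow-ups increase charge by $1$ while minimal models have $q\ge 0$, and then a separate hand enumeration to rule out $n=10$ (which has $q=2$ and so is not excluded by the charge count alone). Your lattice argument --- adjunction forces $K_X\cdot[C_i]=0$ and $K_X^2=0$, hence $X\cong\CC\PP^2\#9\overline{\CC\PP}^2$; the kernel of $Q_{D_n}$ is spanned by $(1,\dots,1)$, so any relation among the $[C_i]$ would produce a nonzero multiple of $K_X$ equal to zero, forcing linear independence; and then $V/\langle K_X\rangle$ of dimension $n-1$ must inject into the rank-$8$ negative definite space $K_X^\perp/\langle K_X\rangle$ --- handles all $n\ge 10$ uniformly and eliminates the ad hoc $n=10$ case analysis, which is a genuine simplification. (It proves the sharp bound $n\le 9$ directly, though unlike the charge argument it does not generalize as readily to the negative definite divisors treated later via $q(D)\ge 3$.) The non-rigidity part is identical to the paper's: both cite Lemma \ref{lemma:D_n symp embeddable} and Remark \ref{rmk:not rigid}.
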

\begin{proof}
	The circular spherical divisor $D_1,D_2$ can be obtained by blowing up from the divisor $(1,4)$, $D_3$ from $(1,1,1)$ and $D_n$ from $(0,0,0,0)$ for $4\le n \le 9$. So for $1\le n\le 9$, each $D_n$ is anti-canonical.

	Recall the charge $ q(D)=12-3r(D)-\sum s_i $. A toric blow-up preserves the charge and a non-toric blow-up increases the charge by $ 1 $. Since every anti-canonical divisor $ D $ in Theorem \ref{thm:minimal model} has $ q(D)\ge 0 $ and every anti-canonical circular spherical divisor $ D' $ is obtained from one such divisor through a sequence of toric and then non-toric blow-ups, we must have $ q(D')\ge 0 $. 
	Now we consider the case when $ n\ge 11 $.
	Since there is no $(-1)$-component in $D_n$, it must be obtained by a non-toric blow-up from a divisor of the form \[ (\underbrace{-2,\dots, -2}_{n-1},-1).\] 
	Toric blow it down to $ (\underbrace{-2,\dots, -2}_{n-3},-1,-1) $. Then toric blowing down either of the $ (-1) $-components gives the same divisor $ (\underbrace{-2,\dots, -2}_{n-4},-1,0) $ up to (anti-)cyclic permutations. We then keep blowing down the unique $ (-1) $-component at each stage to get 
	\[ (\underbrace{-2,\dots, -2}_{n-4},-1,0)\to (\underbrace{-2,\dots, -2}_{n-5},-1,1)\to \dots \to (-2,-1,n-5) \to (-1,n-4). \]
	For $n\ge 10$, $(-1,n-4)$ is not one in Theorem \ref{thm:minimal model}. We conclude that there must be at least two non-toric blow-ups and thus $q(D_n)\ge 2$.
	Note that $ q(D_n)=12-n $. So we conclude that $ D_n $ is not anti-canonical for all $ n\ge 11 $.

	It remains to discuss the case $n=10$. For $D_{n}$ to be anti-canonical, it must be obtained from one in Theorem \ref{thm:minimal model} via toric and non-toric blow-ups. So such divisors from Theorem \ref{thm:minimal model} cannot have any component less than $-2$. Also note that $(1,1,1)$ must blow up to $(1,0,-1,0)$ and $(4,1)$ must blow up to $(3,-1,0)$. Then up to (anti-)cyclic permutation, the possible divisors to blow up from are \begin{enumerate}[$(1)$]
		\item $(p,4-p), -2\le p\le 2$;
		\item $(p,0,2-p), -1\le p\le 1$;
		\item $(p,0,-p,0), 0\le p\le 2$,
	\end{enumerate}
	where the bounds on $ p $ is determined by (anti-)cyclic symmetry and the fact that no component has self-intersection less than $ -2 $.


	For $n=10$, suppose $D_{10}$ is obtained from a divisor in case $(i)$ above for $i=1,2,3$. It requires $9-i$ toric blow-ups to reach the correct length and then $i-1$ non-toric blow-ups to reach the correct charge. Since there are always at least two non-toric blow-up, we can only have case $(3)$. 
	By hand enumeration, we see that there is no $6$-time toric blow-ups of case $(3)$ that keeps all components $ \ge -2 $.
	So they cannot be blown up to $D_{10}$.

	The fact that $D_n$ is not rigid follows from Lemma \ref{lemma:D_n symp embeddable} and Remark \ref{rmk:not rigid}.
\end{proof}

Combining Lemma \ref{lemma:D_n symp embeddable} and Proposition \ref{prop:s_n}, we now get non-examples to Theorem \ref{thm:embeddable=rigid} in the strictly negative semi-definite case.
\begin{example}\label{example:negative semidef}
	The circular spherical divisor $D_n$ is \begin{itemize}
		\item anti-canonical but not rigid, for $n\le 9$,
		\item symplectically embeddable but not anti-canonical, for $n\ge 10$.
	\end{itemize}
\end{example}

It seems generally difficult to obstruct or construct rational embeddings of $D_n$. 
The only obstruction we know is that $D_n$ cannot be embedded in $\CC\PP^2\# k\overline{\CC\PP}^2$ for $k<9$. This is because if $D_n$ is embedded in $X=\CC\PP^2\# k\overline{\CC\PP}^2$, we can smooth it to a symplectic torus $T$ with $T^2=0$, which implies $c_1(X)\cdot T=0$. Then by light cone lemma, we must have $c_1(X)^2\le 0$, implying that $k\ge 9$. Note that all anti-canonical $D_n$ must embed in $\CC\PP^2\# 9 \overline{\CC\PP}^2$.
This tempts us to make the following conjecture.
\begin{conjecture}\label{conj:semi-def}
	$D_n$ is rationally embeddable if and only if $D_n$ is anti-canonical.
\end{conjecture}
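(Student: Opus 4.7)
The implication ``anti-canonical $\Rightarrow$ rationally embeddable'' is immediate, so by Proposition~\ref{prop:s_n} it remains to show that $D_n$ is not rationally embeddable whenever $n\ge 10$. The plan is to combine the light cone lemma, the exceptional-class argument used in Lemma~\ref{lemma:(-1,-3) rigid}, and a rank obstruction inside the $E_8$ root lattice. Suppose for contradiction that $D_n$, $n\ge 10$, embeds symplectically in a rational surface. Iteratively symplectically blowing down exceptional spheres in the complement of $D_n$ produces an embedding of $D_n$ in $X=\CC\PP^2\#k\overline{\CC\PP}^2$ or $X=S^2\times S^2$ with $X\setminus D_n$ minimal. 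Smoothing $D_n$ to a symplectic torus $T$ gives $[T]^2=0$; adjunction on each $(-2)$-sphere $C_i$ forces $c_1(X)\cdot[C_i]=0$, hence $c_1(X)\cdot[T]=0$, while $[T]\cdot[\omega]>0$ ensures $[T]\ne 0$.

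If $k\le 8$ or $X=S^2\times S^2$, then $(-K)^2>0$ and the subspace $(-K)^\perp$ is negative definite, so $[T]\in(-K)^\perp$ with $[T]^2=0$ forces $[T]=0$, a contradiction. If $k\ge 10$, then $(K+[T])^2=K^2<0$, and so Proposition~3.14 of \cite{LiZh11-relative} produces an exceptional class $e$ with $[T]\cdot e=0$; iterating via Theorem~3.21 of \cite{LiZh11-relative} yields $k-9$ pairwise orthogonal such classes $e_1,\dots,e_{k-9}$. For a tame $J$ making every $C_i$ pseudoholomorphic, I would decompose each $J$-holomorphic cycle representing $e_j$ as $\sum_l n_l A_l$; positivity of intersection on components $A_l$ outside $D_n$, combined with the automatic identity $[T]\cdot[C_{i'}]=0$ on components equal to some $C_{i'}$, shows $[T]\cdot[A_l]\ge 0$ for every $l$, and the constraint $[T]\cdot e_j=0$ then forces each non-$D_n$ component to be disjoint from $D_n$. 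Since $K\cdot e_j=-1$ cannot come from $D_n$-components alone (each having $K\cdot[C_{i'}]=0$), some non-$D_n$ component $A_l$ must satisfy $K\cdot[A_l]<0$, and a case analysis invoking McDuff's classification of symplectic spheres to reduce the case $[A_l]^2\ge 0$ to the exceptional case produces an exceptional sphere in $X\setminus D_n$, contradicting minimality.

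The main obstacle I anticipate is the remaining case $k=9$, where $K^2=0$ and the Proposition~3.14 input degenerates. The crucial new ingredient is a rank obstruction inside the $E_8$ lattice. Since $K^2=0$ with $K\in K^\perp$, the induced bilinear form on $K^\perp\subset H_2(X;\ZZ)$ is degenerate with radical $\ZZ\cdot K$, and the quotient $K^\perp/\ZZ K$ is isomorphic to the negative-definite $E_8$ root lattice. Each $[C_i]$ lies in $K^\perp$ because $c_1\cdot[C_i]=0$, and its image $\overline{[C_i]}\in E_8$ inherits the pairings $\overline{[C_i]}\cdot\overline{[C_j]}=[C_i]\cdot[C_j]$. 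Any coincidence $\overline{[C_i]}=\overline{[C_j]}$ for $i\ne j$ would give $[C_i]\cdot[C_j]=\overline{[C_i]}^{\,2}=-2$, incompatible with the cycle values $[C_i]\cdot[C_j]\in\{0,1\}$, so the $\overline{[C_i]}$ are pairwise distinct. Their Gram matrix equals the negative of the Cartan matrix of the affine Dynkin diagram $\tilde A_{n-1}$, which has rank exactly $n-1$, so the $\overline{[C_i]}$ span a sublattice of rank $n-1$ inside the rank-$8$ lattice $E_8$; for $n\ge 10$ this requires rank $\ge 9>8$, a contradiction. This closes the case $k=9$ and completes the proof. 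The chief technical point to secure is the positivity and case analysis in the $k\ge 10$ step, after which the $E_8$ rank obstruction at $k=9$ supplies the sharp new input matching Proposition~\ref{prop:s_n} and making $n=9$ the exact boundary predicted by Conjecture~\ref{conj:semi-def}.
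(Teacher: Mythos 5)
First, a point of orientation: the paper does \emph{not} prove this statement. It is posed as an open conjecture, and the only supporting evidence the authors record is the observation that $D_n$ cannot embed in $\CC\PP^2\#k\overline{\CC\PP}^2$ for $k\le 8$ (which is exactly your first case, via the light cone lemma). So there is no proof of theirs to compare against; what you have written is an attack on an open problem and must be held to a correspondingly higher standard of detail.

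Within your outline, the $k=9$ step is correct and is a genuinely new ingredient relative to the paper: adjunction gives $K\cdot[C_i]=0$, so the $[C_i]$ lie in $K^\perp$, the quotient $K^\perp/\ZZ K$ is the rank-$8$ lattice $E_8$, and the images carry the Gram matrix $Q_{D_n}$ of rank $n-1$, forcing $n\le 9$; this needs no minimality hypothesis, and a similar unimodularity argument in fact also disposes of $k=10$. The gap is in the endgame of the $k\ge 10$ case. Having produced a non-$D_n$ component $A_l$ disjoint from $D_n$ with $K\cdot[A_l]<0$, hence $[A_l]^2\ge -1$ by adjunction, you propose to treat $[A_l]^2\ge 0$ by ``invoking McDuff's classification of symplectic spheres to reduce to the exceptional case.'' That mechanism does not work: $A_l$ need not be a sphere, and McDuff's classification does not convert a class of non-negative square into an exceptional one. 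What actually closes this case is the light cone lemma applied to $[A_l]$ and $[T]$: from $[T]^2=0$, $[A_l]^2\ge 0$, $[T]\cdot[A_l]=0$ and $b^+(X)=1$, either $[T]=0$ (impossible since $[T]\cdot[\omega]>0$) or $[A_l]$ is proportional to $[T]$, whence $K\cdot[A_l]=0$, a contradiction; the remaining possibility $[A_l]^2=-1$ forces equality in adjunction, so $A_l$ is an embedded exceptional sphere in $X\setminus D_n$, contradicting minimality. Two further points to secure before claiming the conjecture: verify that the hypotheses of Proposition~3.14 and Theorem~3.21 of \cite{LiZh11-relative} are met for a square-zero symplectic torus (the paper uses them in exactly this way in the proof of Lemma~\ref{lemma:(-1,-3) rigid}, so this should be fine), and note that a single exceptional class orthogonal to $[T]$ already suffices, so the count of $k-9$ pairwise orthogonal classes is superfluous. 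With the light-cone repair in place I do not see a remaining obstruction, but the $k\ge 10$ decomposition argument should be written out in full before this is asserted as a theorem.
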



\subsection{Negative definite case and cusp singularities}\label{section:neg def}
A cusp singularity is the germ of an isolated, normal surface singularity such that the exceptional divisor of the minimal resolution is a cycle of smooth rational curves  $D$ meeting transversely (\cite{EbWall1985}). Note that  $D$ is a negative definite circular spherical divisor. Conversely when $D$ is negative definite, it arises as the resolution of a cusp singularity by Mumford-Grauert criterion (\cite{Grauert1962},\cite{Mumford1961}).

A cusp singularity is called rational if its minimal resolution $ D $ is realized as the anti-canonical divisor of a rational surface, i.e. $D$ is anti-canonical. Every cusp singularity has a dual cusp singularity with its minimal resolution $\check{D}$ to be the dual cycle of $D$. For a negative definite toric minimal divisor \[
	D=(-a_1,\underbrace{-2,\dots,-2}_{b_1},\dots,-a_k,\underbrace{-2,\dots,-2}_{b_k}),
\]
with $a_i\ge 3$ and $b_i\ge 0$, its dual cycle is explicitly given by\[
	\check{D}=(-b_1-3,\underbrace{-2,\dots,-2}_{a_1-3},\dots,-b_k-3,\underbrace{-2,\dots,-2}_{a_k-3}).
\]
Looijenga proved in \cite{Lo81} that if the cusp singularity with exceptional divisor $ D $ is smoothable, then there exists anti-canonical pair $ (Y,\check{D}) $. He conjectured that the converse is also true, which was proved in \cite{GrHaKe11} and \cite{En}.
So the Looijenga conjecture gives an answer to Question \ref{question} for the negative definite circular spherical divisors in terms of its dual cycle.
\begin{theorem}[Looijenga conjecture \cite{Lo81},\cite{GrHaKe11},\cite{En}]
	A toric minimal negative definite circular spherical divisor $D$ is anti-canonical if and only if the cusp singularity corresponding to its dual cycle $\check{D}$ is smoothable.
\end{theorem}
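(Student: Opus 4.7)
The statement is the Looijenga conjecture for dual cusp pairs, and its two directions were established by very different means. The plan is to outline separately the easy direction due to Looijenga and the converse due to Gross--Hacking--Keel and Engel, as the two require fundamentally different techniques.

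For the direction ``$\check{D}$-cusp smoothable implies $D$ is anti-canonical'' (Looijenga's 1981 theorem), the plan is as follows. Start with a one-parameter smoothing $\mathcal{X} \to \Delta$ whose central fiber has the cusp singularity $\check{p}$ resolved by $\check{D}$, and let $V$ denote a smooth nearby fiber. The key step is to construct a projective compactification $Y$ of $V$ whose boundary is a cycle of smooth rational curves with self-intersection sequence dual to $\check{D}$, which by the explicit duality formula in Section \ref{section:neg def} is precisely $D$. One then checks via adjunction that $[D] = -K_Y$, making $(Y, D)$ an anti-canonical pair. Looijenga's original argument achieves this by building local models of the degeneration near each node of the cycle and globally gluing them using period data.

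For the converse, given an anti-canonical pair $(Y,D)$ with $D$ negative definite, one must produce a smoothing of the dual cusp $\check{p}$. The plan is to follow the Gross--Hacking--Keel mirror-symmetric construction: define a graded algebra $A$ whose homogeneous pieces are indexed by the integral tropicalization of $D$ and whose structure constants are given by counts of broken lines on a canonical scattering diagram attached to $(Y,D)$. The central technical content is to show that $A$ is finitely generated and flat over a suitable base ring, so that $\mathrm{Spec}\, A$ is a flat family whose central fiber contains $\check{p}$ and whose general fiber is smooth; this family realizes the sought smoothing. Engel's alternative approach bypasses mirror symmetry by reducing to tractable ``seed'' anti-canonical pairs via a sequence of toric blow-downs and constructing smoothings directly in those cases.

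The main obstacle is the converse. Even once one accepts the mirror-symmetric framework, verifying finite generation of the theta-function algebra and identifying its spectrum with a smoothing of $\check{p}$ is where the full combinatorial machinery of scattering diagrams and broken lines is essential, and no elementary argument avoiding these tools is currently known.
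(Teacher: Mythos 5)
The paper does not prove this theorem: it is imported as an external result, with the direction ``smoothable $\Rightarrow$ dual is anti-canonical'' attributed to \cite{Lo81} and the converse to \cite{GrHaKe11} and \cite{En}, so there is no internal argument to compare against. Your outline correctly attributes the two directions to those sources and gives an accurate high-level summary of their methods (compactifying the nearby fiber of a smoothing for Looijenga's direction; the theta-function/scattering-diagram construction of Gross--Hacking--Keel, or Engel's direct approach, for the converse), which matches the paper's treatment in substance --- though, like the paper, it is a roadmap deferring all substantive steps to the cited works rather than a self-contained proof.
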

It is a hard (if not harder) problem to determine whether a cusp singularity is smoothable. 
So it is still interesting to understand when negative definite circular spherical  divisors are symplectically embeddable, rationally embeddable, anti-canonical or rigid.

Proposition \ref{prop:s_n} implies that any toric minimal negative definite $D$ with length $\le 9$ is anti-canonical since they are all non-toric blow-ups of $D_n$ with $n\le 9$. Another useful obstruction to negative definite anti-canonical divisors is that they must have $q(D)\ge 3$ by Lemma 4.3 in \cite{FM83}.



Unlike the $b^+\ge 1$ case, negative definite divisors are always symplectically embeddable. Since $D$ is negative definite, the plumbing $N_D$ can be equipped with a convex symplectic structure such that $N_D$ is a symplectic filling. Then $D$ is symplectically embeddable by capping off the filling with a symplectic cap (\cite{EHcap}, \cite{Ecap}).

More concretely, as in Lemma \ref{lem: not negative semi-definite => at least one non-negative}, any toric minimal negative definite circular spherical divisor $D$ can be obtained from some $D_n$ through non-toric blow-ups.
Since by Lemma \ref{lemma:D_n symp embeddable} $D_n$ admits a symplectic embedding into $(X,\omega)$ which is not symplectic rational, then by blowing up $D$ admits a symplectic embedding into some $(X',\omega')$ which is also not symplectic rational. By Remark \ref{rmk:not rigid}, we get the following result.
\begin{prop}\label{cor:negative definite}
	Every negative definite circular spherical divisor is symplectically embeddable and is not rigid.
\end{prop}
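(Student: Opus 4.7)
The plan is to combine two ingredients: the explicit construction of symplectic embeddings of $D_n=(\underbrace{-2,\dots,-2}_n)$ into non-rational symplectic $4$-manifolds from Lemma \ref{lemma:D_n symp embeddable}, and the observation that every toric minimal negative definite circular spherical divisor can be reached from an appropriate $D_n$ by non-toric blow-ups. Throughout, I would use Lemma \ref{lemma:toric eq preserves} to reduce to toric minimal representatives (the residual $(-1,p)$ case being toric equivalent to a toric minimal one after toric blow-up, again with $p$ negative definite).

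First I would show symplectic embeddability. By Lemma \ref{lem: not negative semi-definite => at least one non-negative}, a toric minimal negative definite $D=(s_1,\dots,s_r)$ has $s_i\le -2$ for every $i$ with at least one strict inequality. A single non-toric blow-up at a smooth point of a component $C_i$ decreases $s_i$ by one and leaves all other self-intersections unchanged. Iterating this on $D_n$ with $n=r(D)$, I can reach the prescribed $D$. Since Lemma \ref{lemma:D_n symp embeddable} provides a symplectic embedding of $D_n$ into a symplectic $4$-manifold $(X,\omega)$ that is not symplectically rational, performing the corresponding sequence of symplectic non-toric blow-ups yields a symplectic embedding of $D$ into some $(X',\omega')$.

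Second, to conclude non-rigidity, I would apply Remark \ref{rmk:not rigid}. Symplectic blow-up preserves the property of not being a rational symplectic manifold, so $(X',\omega')$ is still non-rational. Hence the embedding of $D$ in $(X',\omega')$ realizes $D$ inside a non-rational ambient manifold. By Remark \ref{rmk:not rigid}, one can then further blow down to arrange that $X'-D$ is minimal while keeping the ambient manifold non-rational; since anti-canonical pairs force the ambient manifold to be rational, $(X',D,\omega')$ cannot form a symplectic Looijenga pair, witnessing that $D$ is not rigid.

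The main obstacle is really bookkeeping rather than anything substantive: one must check that every toric minimal negative definite divisor genuinely arises via a sequence of non-toric blow-ups from $D_n$ (and that the length-$2$ case $(-1,p)$ with $p\le -5$ is absorbed into the toric equivalence reduction). Alternatively, if this combinatorial route is judged inconvenient, one could instead invoke the convex plumbing structure on $N_D$ provided by Proposition \ref{prop: convex-concave} and symplectically cap off the resulting filling \cite{EHcap,Ecap} to get symplectic embeddability directly; but then a separate argument is needed to ensure the cap can be chosen non-rational in order to deduce non-rigidity, which is why I prefer the explicit blow-up approach above.
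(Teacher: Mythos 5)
Your argument is essentially the paper's own proof: the paper likewise obtains every toric minimal negative definite divisor from some $D_n$ by non-toric blow-ups, invokes Lemma \ref{lemma:D_n symp embeddable} for an embedding into a non-rational ambient manifold, and concludes non-rigidity via Remark \ref{rmk:not rigid} (it also records your alternative capping-off route for embeddability). One small correction to your bookkeeping: $(-1,p)$ with $p\le -5$ is \emph{not} toric equivalent to a toric minimal divisor (a toric blow-up always creates a fresh $-1$-component, and a length-$2$ divisor cannot be toric blown down); instead it is reached by non-toric blow-ups from $D_1=(-1,-4)$, which is precisely why Lemma \ref{lemma:D_n symp embeddable} includes the case $n=1$.
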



Similar to Example \ref{example:negative semidef}, we can find negative definite spherical divisors that violates Theorem \ref{thm:embeddable=rigid}. These examples show that the condition $ b^+\ge 1 $ in Theorem \ref{thm:embeddable=rigid} is sharp.
\begin{example}\label{example:negative def}
	The negative definite circular spherical divisor \[
D=(\underbrace{-2,\dots,-2}_{n-1},-3),
\]
for $ n\ge 11 $, is not anti-canonical since $ q(D)<3 $. But it is symplectically embeddable by Proposition \ref{cor:negative definite}. Also divisor $D=(-2,-n)$ for $n\ge 3$ is negative definite and anti-canonical, but not rigid. 
\end{example}

Similar to the strictly negative semi-definite case, rational embeddability of negative definite divisors seems difficult to study. The only obstruction we know is that they do not embed in $\CC\PP^2\# k\overline{\CC\PP}^2$ with $k< 10$. If $D$ embedds in $X=\CC\PP^2\# k\overline{\CC\PP}^2$, we get a symplectic torus $T$ in $X$ with $T^2<0$ by smoothing $D$. The torus can be made to be $J$-holomorphic by choosing a suitable tame almost complex structure $J$. But such torus doesn't exist in $\CC\PP^2\# k\overline{\CC\PP}^2$ for $k\le 9$ by Proposition 4.2 in \cite{Zhang-curve}.
Note that negative definite anti-canonical $D$ must also embed in $\CC\PP^2\# k\overline{\CC\PP}^2$ for $k\ge 10$. Then we can ask if Conjecture \ref{conj:semi-def} holds for negative definite divisors.
\begin{question}
	Let $D$ be a negative definte circular spherical divisor. If $D$ is  rationally embeddable, is it anti-canonical?
\end{question}

	


\section{Contact aspects}
This section is devoted to the study of contact topology related to circular spherical divisors. Section \ref{section:divisor nbhd} reviews the relation between topological divisors and contact structures, and shows that the induced contact structure only depends on the toric equivalence class of the divisor. 
Then \ref{section:trichotomy} explains that the trichotomy of circular spherical divisors into $b^+>0$, negative definite and strictly negative semi-definite cases is related to the convexity of the divisor. 
In Section \ref{section:fillability} we apply Theorem \ref{thm:embeddable=rigid} and Theorem \ref{thm:list} to prove Theorem \ref{thm:embeddable=fillable}. 
Finally in Section \ref{section:convex Stein} we compute Betti number bounds for Stein fillings when $ D $ is negative definite and raise a conjecture related to Looijenga conjecture.

\subsection{Divisor neighborhood and contact structure}\label{section:divisor nbhd}
In this section, we review some results about the convexity of divisor neighborhoods and the induced contact structure on the boundary. 

Let $ D $ be a symplectic divisor in symplectic 4-manifold $ (W,\omega ) $ (not necessarily closed). A closed regular neighborhood of $ D $ is called a plumbing of $ D $. A plumbing $ N_D $ of $ D $ is called a \textbf{concave/convex plumbing} if it is a strong symplectic cap/filling of its boundary and such $ D $ is called a {\bf concave/convex} symplectic divisor. A concave plumbing is also called a \textbf{divisor cap} of its boundary. Let $ Q_D $ be the intersection matrix of $ D $ and $ a=([C_i]\cdot [\omega])\in (\RR_+)^r $ be the area vector of $ D $. A symplectic divisor $ D $ is said to satisfy the positive (resp. negative) \textbf{GS criterion} if there exists $ z\in (\RR_+)^r $ (resp. $ (\RR_{\le 0})^r $) such that $ Q_D z=a $. 

The GS criterion provides a way to tell when the divisor neighborhood is convex or concave.
\begin{theorem}[\cite{LiMa14-divisorcap}]\label{thm:divisorcap}
	Let $ D\subset (W,\omega ) $ be an $ \omega $-orthogonal symplectic divisor. Then $ D $ has a concave (resp. convex) plumbing if $ (D,\omega ) $ satisfies the positive (resp. negative) GS criterion.
\end{theorem}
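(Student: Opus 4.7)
The plan is to construct a Liouville vector field $V$ on a punctured neighborhood of $D$ together with a smooth hypersurface transverse to $V$ bounding the desired plumbing. The sign convention on the solution $z$ to $Q_D z = a$ will determine whether $V$ points outward from this hypersurface (convex case, all $z_i \le 0$) or inward (concave case, all $z_i > 0$), so that $V$ exhibits $N_D$ as a strong filling or as a strong cap respectively.

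First I would invoke a version of the symplectic neighborhood theorem tailored to $\omega$-orthogonal normal crossing divisors to put $\omega$ in a standard form on each chart of a convenient cover of $D$. Over a smooth point of $C_i$, the model is the disk bundle form
\[
\omega = \pi_i^*\omega_i + d(t_i\, \alpha_i),
\]
where $\pi_i\colon N_i \to C_i$ is the disk normal bundle, $\omega_i$ is an area form on $C_i$ with $\int_{C_i}\omega_i = a_i$, $t_i = \tfrac12 r_i^2$ is the squared fiber radius, and $\alpha_i$ is a connection 1-form satisfying $d\alpha_i = -(s_i/a_i)\,\pi_i^*\omega_i$. Orthogonality at each intersection point of $C_i$ and $C_j$ yields Darboux coordinates in which $\omega = dt_i \wedge d\theta_i + dt_j \wedge d\theta_j$.

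Next I would construct local Liouville vector fields. In the disk bundle chart the ansatz $V = (t_i - c)\,\partial_{t_i}$ solves $d\iota_V \omega = \omega$ precisely when $c = a_i/s_i$ (an analogous ansatz incorporating a horizontal correction handles the case $s_i = 0$). In the bidisk chart the candidate $V = (t_i - c_i)\,\partial_{t_i} + (t_j - c_j)\,\partial_{t_j}$ is Liouville for any constants, but matching the disk bundle model on the two intersecting sides forces $c_i = z_i$ and $c_j = z_j$. These local pieces will then be shown to glue into a single Liouville vector field on a punctured neighborhood of $D$, after which I would take the plumbing as a regular sublevel set of the potential $\varphi = \sum_i |z_i|\, t_i$ defined via a partition of unity from these local expressions.

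The hardest step will be the global gluing: the local Liouville primitives coming from the disk bundle model and the bidisk model agree on their overlaps only modulo a 1-cocycle whose holonomy along each meridian of $C_i$ in $\partial N_D$ reduces, up to coboundaries, to $\sum_j s_{ij} z_j - a_i$. The GS equation $Q_D z = a$ is precisely the cohomological condition that kills this obstruction and yields a globally defined primitive and therefore a global Liouville vector field $V$. Once $V$ is in hand, computing the sign of $V \cdot \varphi$ on $\partial\{\varphi \le \epsilon\}$ for small $\epsilon > 0$ shows that $V$ points outward when each $z_i \le 0$ (with strict inequality somewhere so that $V$ is truly transverse) and inward when each $z_i > 0$, producing the convex or concave plumbing as required.
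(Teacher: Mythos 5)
The paper does not prove this statement itself; it is imported verbatim from \cite{LiMa14-divisorcap}, so there is no internal proof to compare against. Your outline is, in substance, the Gay--Stipsicz-type construction that the cited reference actually carries out: standard local models (disk bundle with connection form away from the nodes, bidisk Darboux charts at the nodes), local Liouville fields on the punctured neighborhood, and the observation that the equation $Q_D z = a$ is exactly the cocycle condition that lets the local primitives glue, with the sign of $z$ deciding whether the resulting Liouville field points out of or into the plumbing. Your sign check is consistent with the theorem (e.g.\ a single $s<0$ sphere forces $z=a/s<0$ and an outward field, hence convex). The one packaging difference worth noting: Li--Mak do not construct the Liouville field directly for the ambient $\omega$; they build an abstract model $(N_D,\omega(z))$ carrying the Liouville structure by construction and then transplant it into $(W,\omega)$ via the uniqueness of symplectic neighborhoods of $\omega$-orthogonal divisors with prescribed areas. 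That route sidesteps the most delicate point in your version, namely showing that the gluing obstruction for the \emph{given} $\omega$ (rather than a model form) is killed by $Q_Dz=a$; if you pursue your version you should either run a Moser argument to put $\omega$ in standard form globally near $D$ first, or be prepared to track the holonomy computation carefully, especially for components with $s_i=0$ where no local Liouville field exists even on the punctured disk bundle and the cancellation is genuinely global around the cycle.
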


Note that a symplectic divisor can always be made $ \omega $-orthogonal by a local perturbation (\cite{Gom95-fibersum}). 
A necessary condition for $D$ to have concave or convex plumbing is $\omega$ being exact on the boundary $ Y_D $. To determine the exactness of $ \omega|_{Y_D} $, it suffices to check the following local criterion.

\begin{lemma}[\cite{LiMa14-divisorcap}]\label{lem: non-degenerate intersection form}  
	$\omega|_{Y_D}$ is exact if and only if there is a solution for $z$ to the equation $Q_Dz=a$, where $a=([\omega]\cdot [C_1],\dots,[\omega]\cdot [C_r])$ is the area vector. 
	In particular, this holds if  $Q_D$ is non-degenerate.
\end{lemma}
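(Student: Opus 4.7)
The plan is to translate the exactness question into a linear-algebra question on $H_2(N_D)$ by passing through the long exact sequence of the pair $(N_D,Y_D)$ in de Rham cohomology, together with Lefschetz duality.

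First I would observe that $\omega|_{Y_D}$ is closed, so asking for exactness is equivalent to asking that its de Rham class vanishes in $H^2(Y_D;\RR)$. By the long exact sequence
\[
H^2(N_D,Y_D;\RR)\xrightarrow{j^*} H^2(N_D;\RR)\xrightarrow{i^*} H^2(Y_D;\RR),
\]
this is in turn equivalent to $[\omega|_{N_D}]$ lying in the image of $j^*$.

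Next I would unpack the image of $j^*$ in terms of the $[C_i]$. Since $N_D$ deformation retracts onto $D$, the group $H_2(N_D;\ZZ)$ is free on $\{[C_i]\}_{i=1}^r$, so $H^2(N_D;\RR)\cong\mathrm{Hom}(H_2(N_D),\RR)$ carries a dual basis $\{C_i^\ast\}$. Under this identification $[\omega]$ is precisely the functional $[C_j]\mapsto[\omega]\cdot[C_j]=a_j$, so $[\omega]=\sum_j a_j C_j^\ast$. Now Lefschetz duality gives $H^2(N_D,Y_D;\RR)\cong H_2(N_D;\RR)$, and a standard (and essentially definitional) diagram chase shows that $j^*$ becomes the intersection form $Q_D\colon H_2(N_D;\RR)\to H^2(N_D;\RR)$, sending $[C_i]$ to $\sum_j s_{ij}C_j^\ast$. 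Consequently for $z=\sum_i z_i[C_i]$ the equation $j^*(z)=[\omega]$ is literally the linear system $Q_D z = a$, which proves the equivalence.

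For the final clause, when $Q_D$ is non-degenerate it is a full-rank square matrix over $\RR$, hence surjective, so $Q_Dz=a$ always has a solution; by the equivalence above $\omega|_{Y_D}$ is automatically exact. The only subtle point (and the place I would be most careful) is verifying that under the Lefschetz and Poincaré duality identifications the connecting map $j^*$ really does become $Q_D$ with the sign and convention used in the paper; this is a naturality check that I expect to follow from the definition of the intersection pairing via cup product $H^2(N_D,Y_D)\otimes H^2(N_D,Y_D)\to H^4(N_D,Y_D)\cong\RR$ together with the fact that $\mathrm{PD}[C_i]$ restricts to $C_i^\ast$ on the interior.
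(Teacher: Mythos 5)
Your argument is correct and is essentially the standard proof of this lemma (which the paper imports from \cite{LiMa14-divisorcap} without reproving): exactness of $\omega|_{Y_D}$ is equivalent to $[\omega]$ lying in the image of $H^2(N_D,Y_D;\RR)\to H^2(N_D;\RR)$, and that map is identified with $Q_D$ exactly as in the paper's own setup, where $Q_D$ is described via Lefschetz duality as the natural homomorphism $H_2(N_D)\to H_2(N_D,Y_D)$. The only caveat you raise, the sign/convention check for the duality identification, is indeed routine and does not affect the solvability of $Q_Dz=a$.
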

One can also check by simple linear algebra that the above condition is preserved under toric equivalence. If $ \omega|_{Y_D} $ is exact, then there is the following dichotomy depending on whether $ D $ is negative definite.

\begin{theorem}  [\cite{LiMa14-divisorcap}] \label{thm:concave}
	Let $D \subset (W,\omega_0)$ be a symplectic divisor.
	\begin{enumerate}[\indent $ (1) $]
		\item If $ Q_D $ is negative definite, then $ D $ has a convex plumbing.
		\item If  $Q_D$ is not negative definite and
		$\omega_0$ restricted to the boundary  of $D$ is exact, 
		then $\omega_0$ can be locally deformed through a family of symplectic forms $\omega_t$ on $W$ keeping $D$  symplectic and such that $(D,\omega_1)$ is a concave divisor.
	\end{enumerate}
	
\end{theorem}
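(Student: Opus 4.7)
The plan is to invoke the GS criterion of Theorem \ref{thm:divisorcap} in both cases, which reduces the construction of a convex/concave plumbing to solving the linear system $Q_D z = a$ in the appropriate orthant. Here $a = ([\omega] \cdot [C_i]) \in (\RR_+)^r$ is the area vector, and crucially the off-diagonal entries of $Q_D$ are non-negative, since a symplectic divisor has positively transverse intersections.

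For part (1), when $Q_D$ is negative definite, the matrix $-Q_D$ is symmetric, positive definite, and has non-positive off-diagonal entries, i.e.\ it is a Stieltjes matrix. A classical fact from the theory of $M$-matrices asserts that the inverse of a Stieltjes matrix has entirely non-negative entries, so $z := Q_D^{-1} a$ lies in $(\RR_{\leq 0})^r$ (strictly negative when $a > 0$). This verifies the negative GS criterion and Theorem \ref{thm:divisorcap} produces the convex plumbing.

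For part (2), the strategy is to deform the symplectic form so as to achieve the positive GS criterion. By Lemma \ref{lem: non-degenerate intersection form}, exactness of $\omega_0|_{Y_D}$ gives some solution $z_0$ to $Q_D z_0 = a_0$. The first substep is to exhibit $z_1 \in (\RR_+)^r$ with $a_1 := Q_D z_1 \in (\RR_+)^r$. Such $z_1$ exists under the standing hypotheses: if $Q_D$ were negative semi-definite but not definite, then by a Perron-type argument (using connectedness of $D$ and the non-negativity of off-diagonal entries of $Q_D$) a positive kernel vector would exist, forcing the pairing of $a_0$ with $\ker Q_D$ to be both zero (by exactness) and strictly positive (by componentwise positivity of $a_0$) — a contradiction. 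Hence $b^+(Q_D) \geq 1$, and combining a large positive multiple of the all-ones vector with a small multiple of a positive-eigenvalue eigenvector, one can arrange both $z_1$ and $Q_D z_1$ to be strictly positive.

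The main obstacle is then to realize the passage from $a_0$ to $a_1$ by a genuine symplectic deformation $\omega_t$ on all of $W$ that keeps $D$ symplectic throughout. For this I plan to apply a symplectic neighborhood argument: an $\omega$-orthogonal symplectic germ of $D$ is determined up to symplectomorphism rel $D$ by its area vector and the plumbing data (a Moser/Gompf-type statement), so one can build a model family $\widetilde{\omega}_t$ on a tubular neighborhood $N_D$ realizing the interpolated area vectors $(1-t) a_0 + t a_1$. Since both $\widetilde{\omega}_t|_{Y_D}$ and $\omega_0|_{Y_D}$ are exact by construction, their difference admits a primitive on a collar of $\partial N_D$; cutting off this primitive lets one glue $\widetilde{\omega}_t$ smoothly to $\omega_0$ outside a slightly larger neighborhood, producing a global family $\omega_t$ on $W$ with $D$ symplectic throughout. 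Finally, Theorem \ref{thm:divisorcap} applied to $(D, \omega_1)$ delivers the concave plumbing.
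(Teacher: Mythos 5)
First, a framing remark: the paper does not actually prove this statement — it is imported verbatim from \cite{LiMa14-divisorcap} — so there is no internal proof to compare against. Your reduction to the GS criterion of Theorem \ref{thm:divisorcap} (after the standard local perturbation making $D$ $\omega$-orthogonal, which you should mention since Theorem \ref{thm:divisorcap} requires it) is nonetheless the right frame, and part (1) is correct: $-Q_D$ is symmetric positive definite with non-positive off-diagonal entries, hence a Stieltjes matrix, so $(-Q_D)^{-1}\ge 0$ entrywise and $z=Q_D^{-1}a\in(\RR_{\le 0})^r$, verifying the negative GS criterion. Your observation in part (2) that the hypotheses force $b^+(Q_D)\ge 1$ is also correct and worth keeping: if $Q_D$ were negative semi-definite and singular, Perron--Frobenius applied to $Q_D+cI$ (non-negative and irreducible for $c\gg 0$ since $D$ is connected) gives an entrywise positive kernel vector $v$, and $\langle a_0,v\rangle$ would be simultaneously zero (as $a_0\in\mathrm{im}\,Q_D=(\ker Q_D)^{\perp}$) and strictly positive.

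Two steps in part (2) do not hold up as written, however. The recipe $z_1=N\mathbf{1}+\epsilon v$ with $N$ large and $\epsilon$ small fails whenever $Q_D\mathbf{1}$ has a negative entry, which happens as soon as some component has sufficiently negative self-intersection (e.g.\ $D=(0,-5)$ gives $Q_D\mathbf{1}=(2,-3)$); then $NQ_D\mathbf{1}+\epsilon\lambda v$ cannot be made positive with $\epsilon$ small. The fix is that no mixing is needed: the Perron eigenvector $v>0$ of $Q_D+cI$ is an eigenvector of $Q_D$ for its largest eigenvalue $\lambda_{\max}$, which is positive once $b^+(Q_D)\ge 1$, so $z_1=v$ already satisfies $z_1>0$ and $Q_Dz_1=\lambda_{\max}v>0$. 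More seriously, the gluing step — writing $\widetilde{\omega}_t-\omega_0=d\beta_t$ on a collar and replacing $\omega_0$ by $\omega_0+d(\rho\beta_t)$ for a cutoff $\rho$ — does not automatically yield a symplectic form: non-degeneracy of the cut-off interpolation is precisely the technical content of the deformation statement in \cite{LiMa14-divisorcap}, and it must be secured either by subdividing the path of area vectors $(1-t)a_0+ta_1$ into steps where the primitives are small relative to the collar, or by an inflation-type argument along the components $C_i$. As a proof of the quoted theorem, your sketch therefore has a genuine gap at its central step, even though the surrounding linear algebra is (after the fix above) sound.
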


Although the statement of Theorem \ref{thm:divisorcap} concerns the ambient symplectic manifold $ (W,\omega ) $, it actually does not rely on it. Suppose $ D $ is only a topological divisor with intersection matrix $ Q_D $ such that there exists $ z,a $ satisfying the positive (resp. negative) GS criterion $ Q_D z=a $. Then Theorem \ref{thm:divisorcap} actually constructs a compact concave (resp. convex) symplectic manifold $ (N_D,\omega(z)) $ such that $ D $ is $ \omega(z)- $orthogonal symplectic divisor in $ N_D $ and $ a $ is the $ \omega(z)- $area vector of $ D $. 

Now that $(N_D,\omega(z))$ is a convex or concave neighborhood of $D$, the Liouville vector field induces a contact structure on the boundary which we call $\xi_D$. 
\begin{remark}\label{rmk:positive contact str}
	In this paper we always require a contact structure to be positive, i.e. $\alpha\wedge d\alpha >0$ for any contact form $\alpha$. Note that $Y_D$ is oriented as the boundary of $N_D$. In the case of convex neighborhood, $(Y_D,\xi_D)$ is a positive contact manifold. But if $N_D$ is a concave neighborhood, we have $\alpha\wedge d\alpha <0$ in this orientation as the Liouville vector field points inward. So $(-Y_D,\xi_D)$ is the correct positive contact manifold when $N_D$ is concave.
\end{remark}
The following uniqueness result implies that the symplectic structure $ \omega(z) $ may vary with $ z $ but the induced contact structure on the boundary only depends on $ D $.

\begin{prop}[\cite{LiMa14-divisorcap}, cf. \cite{McL16-discrepancy}]\label{prop:unique-contact}
	Suppose $ D $ is an $ \omega- $orthogonal symplectic divisor which satisfies the positive/negative GS criterion. Then the contact structures induced on the boundary are contactomorphic, independent of choices made in the construction and independent of the symplectic structure $ \omega  $, as long as $ (D,\omega ) $ satisfies positive/negative GS criterion.
	
	Moreover, if $ D $ arises from resolving an isolated normal surface singularity, then the contact structure induced by the negative GS criterion is contactomorphic to the contact structure induced by the complex structure.
\end{prop}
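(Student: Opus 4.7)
The plan is a parametric Moser--Gray argument exploiting smoothness of the GS construction in all of its inputs. Fix the topological plumbing $N_D$ of $D$. For every admissible pair $(\omega, z)$---with $\omega$ a symplectic form on $N_D$ making $D$ an $\omega$-orthogonal symplectic divisor and $z \in (\mathbb{R}_+)^r$ (resp.\ $(\mathbb{R}_{\le 0})^r$) solving $Q_D z = a_\omega$ for $a_\omega = ([\omega]\cdot[C_i])$---the proof of Theorem \ref{thm:divisorcap} produces a Liouville vector field $X_{\omega,z}$ and a transverse hypersurface $\Sigma(\omega,z) \subset N_D$ isotopic to $\partial N_D$ on which $\iota_{X_{\omega,z}}\omega$ restricts to a contact form. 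The goal is to show that the isotopy class of the resulting contact structure depends only on $D$.

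The argument splits into two parametric steps. First, fix $\omega$ and vary $z$: the solution set of $Q_D z = a_\omega$ is an affine subspace, and its intersection with the open positive (resp.\ non-positive) cone is convex, so any two admissible $z_0, z_1$ are joined by the linear path $z_t = (1-t)z_0 + t z_1$ of admissible solutions. Smooth dependence of the GS construction on $z$ gives a smooth one-parameter family of Liouville structures on a common collar of $D$, and Gray stability identifies the contact structures at the endpoints. Second, given symplectic forms $\omega_0,\omega_1$ each admitting a GS solution, the convex combination $\omega_t = (1-t)\omega_0 + t\omega_1$ stays symplectic and keeps $D$ symplectic on a sufficiently small neighborhood of $D$, and the linearity of $Q_D z = a_{\omega_t}$ together with fixedness of $Q_D$ lets us pick $z_t$ varying continuously inside the admissible cone; a second Gray-stability argument identifies the contact structures. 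The residual choices---the $\omega$-orthogonalization perturbation of \cite{Gom95-fibersum} and the specific placement of $\Sigma$---are absorbed into the same parametric framework.

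For the moreover statement, let $\pi : \tilde{U}\to U$ be the minimal resolution of an isolated normal surface singularity with exceptional divisor $D$, and let $\xi_{CR}$ be the CR contact structure on the link $L$. Choose a strictly plurisubharmonic exhaustion $\rho$ of $U$ (for instance the restriction of $\|\cdot\|^2$ under a local embedding $U \hookrightarrow \mathbb{C}^N$), and set $\tilde\rho = \rho\circ\pi$; then $\omega_\rho = -dd^c\tilde\rho$ is a K\"ahler form on $\tilde{U}$, and small level sets of $\tilde\rho$ are strictly pseudoconvex hypersurfaces carrying $\xi_{CR}$ by the standard identification for Stein domains. After an $\omega_\rho$-orthogonalization of $D$ supported away from a smaller level set (which preserves the induced contact structure there by an already-proven Moser argument), one checks that the pair $(\omega_\rho, z_\rho)$ satisfies the negative GS criterion, with $z_\rho$ having non-positive entries because $\pi$ contracts $D$ so that the gradient-like Liouville flow of $\omega_\rho$ points inward toward $D$. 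The first half of the proof then identifies this contact structure with the one produced by any other GS pair.

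The technical heart of the argument is keeping the Moser--Gray interpolations inside the open cone of GS-admissible $z$; this is exactly why we impose strict positivity or strict non-positivity and why convexity of the linear solution set is crucial. A secondary subtlety in the moreover part is matching the contact structure induced by $-dd^c\tilde\rho$ on a level set with the standard CR structure on the link---a classical fact for Stein domains that carries over to the resolution once we restrict to a level set lying inside the strictly pseudoconvex region and invoke the first half of the proposition to absorb the orthogonalization step.
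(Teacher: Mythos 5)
The paper does not actually prove this proposition: it is quoted from \cite{LiMa14-divisorcap} (with the resolution statement going back to \cite{McL16-discrepancy}), so there is no internal proof to compare against. Judged on its own merits, your first step (for fixed $\omega$, the solutions $z$ of $Q_Dz=a_\omega$ in the open positive, resp.\ non-positive, cone form a convex set, so a linear path of admissible $z$'s plus Gray stability identifies the contact structures) is essentially the right idea and matches the strategy of the cited source. But there are two genuine gaps.

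First, the $\omega$-independence step fails as written. You claim that $\omega_t=(1-t)\omega_0+t\omega_1$ ``stays symplectic and keeps $D$ symplectic on a sufficiently small neighborhood of $D$.'' This is false even pointwise on $D$: at a smooth point $p\in C_i$ the only constraint on $\omega_0,\omega_1$ is positivity on the plane $T_pC_i$, and the set of symplectic forms on $\mathbb{R}^4$ positive on a fixed plane is not convex. For instance, with $T_pC_i=\mathrm{span}(e_1,e_2)$, the forms $\omega_0=e^{12}+e^{34}$ and $\omega_1=e^{12}-10\,e^{34}+4(e^{13}+e^{42})$ are both symplectic, positively oriented and positive on that plane, yet $\omega_{1/2}^2<0$. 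Shrinking the neighborhood cannot repair a degeneracy occurring on $D$ itself. The correct route is to observe that the GS construction builds a \emph{model} neighborhood from the combinatorial data $(Q_D,a,z)$ alone, and to invoke the symplectic neighborhood theorem for $\omega$-orthogonal configurations (Gompf, McCarthy--Wolfson): any two symplectic structures making $D$ an $\omega$-orthogonal divisor with the same area vector have symplectomorphic germs near $D$. This reduces $\omega$-independence to independence of $(a,z)$, where the admissible $(a,z)$ again form a convex set and your Gray-stability argument applies.

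Second, in the ``moreover'' part, $\tilde\rho=\rho\circ\pi$ is only plurisubharmonic, not strictly so: $dd^c\tilde\rho$ annihilates all directions tangent to the exceptional divisor $D$ (which $\pi$ contracts), so $-dd^c\tilde\rho$ is degenerate along $D$ and is not a K\"ahler form on the resolution $\tilde U$. One must add a correction term supported near $D$, constructed from the negativity of $Q_D$ (as in Grauert's criterion), before one has a symplectic form to feed into the GS machinery; matching the resulting contact structure on the link with the CR structure is precisely the nontrivial content attributed to \cite{McL16-discrepancy} and cannot be dismissed as ``the standard identification for Stein domains.''
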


This motivates us to consider the notion of convexity for topological divisors. A topological divisor $ D $ is called {\bf concave} (resp. {\bf convex}) if there exists $ z\in (\RR_+)^r $ (resp. $ z\in (\RR_{\le 0})^r $) such that $ a=Q_Dz\in (\RR_+)^r $. Then there is a contact manifold $ (-Y_D,\xi_D) $ (resp. $ (Y_D,\xi_D) $) and its symplectic cap (resp. filling) $ (N_D,\omega(z)) $ containing $ D $ as a symplectic divisor. One can check by simple linear algebra that being concave (resp. convex) is preserved by toric blow-up (see for example Lemma 3.8 of \cite{LiMa14-divisorcap}).

When $ D $ is negative definite, $ (Y_D,\xi_D ) $ is contactomorphic to the contact boundary of some isolated surface singularity (\cite{Grauert1962}) and is called a Milnor fillable contact structure. A closed 3-manifold $ Y $ is called Milnor fillable if it carries a Milnor fillable contact structure. For every Milnor fillable $ Y $, there is a unique Milnor fillable contact structure (\cite{Caubel-Nemethi-Pampu2006}), i.e. the contact structure $ \xi_D  $ only depends on the oriented homeomorphism type of $ Y_D $ instead of $ D $ when $ D $ is negative definite. 

In light of this uniqueness result, it is natural to ask if similar results hold when $ D $ is concave. The first and the third authors formulated the following question in \cite{LiMi-ICCM}.


\begin{question}[\cite{LiMi-ICCM}]
	Suppose $ D_1 $ and $ D_2 $ are concave divisors with $ -Y_{D_1}\cong -Y_{D_2} $. 
	Suppose either $ b^+(Q_{D_1})=b^+(Q_{D_2}) $ or $\xi_{D_1},\xi_{D_2}$ both symplectically fillable, then is $ (-Y_{D_1},\xi_{D_1}) $ contactomorphic to $ (-Y_{D_2},\xi_{D_2}) $?
\end{question}
Since the torus bundles in Theorem \ref{thm:list} are all distinct, we get a positive answer to the question if we restrict ourselves to fillable concave circular spherical divisors.


As a first step towards this quesition, we have the following invariance result.
\begin{prop}[\cite{LiMi-ICCM}]\label{prop:contact toric eq}
	The contact structure induced by GS construction is invariant under toric equivalence.
\end{prop}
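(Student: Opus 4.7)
The plan is to prove invariance under a single toric blow-up; the blow-down case then follows by inversion, and the general toric equivalence case by iteration. Assume $D$ is a concave topological circular spherical divisor (the convex case is entirely analogous, with orientations reversed as in Remark \ref{rmk:positive contact str}), and let $D'$ be obtained from $D$ by a toric blow-up between adjacent components $C_i$ and $C_{i+1}$. By Theorem \ref{thm:divisorcap} we may fix a symplectic form $\omega$ on $N_D$ for which $D$ is $\omega$-orthogonal and $(D,\omega)$ satisfies the positive GS criterion; then $(N_D,\omega)$ is a symplectic cap of $(-Y_D,\xi_D)$.

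The first step is to perform a symplectic blow-up of $(N_D,\omega)$ at the intersection point $p = C_i \cap C_{i+1}$ with a sufficiently small symplectic size $\epsilon > 0$. Call the result $(\widetilde{N},\widetilde{\omega})$. It contains the proper transforms $\widetilde{C}_1,\dots,\widetilde{C}_r$ together with an exceptional $(-1)$-sphere $E$ meeting $\widetilde{C}_i$ and $\widetilde{C}_{i+1}$ transversally, so combinatorially these form a symplectic divisor of type $D'$. After a small perturbation supported in a neighborhood of $E$ (i.e.\ away from $\partial N_D$) we may arrange this divisor to be $\widetilde{\omega}$-orthogonal, as in \cite{Gom95-fibersum}. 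Since the blow-up is performed in the interior, the symplectic form and the outward Liouville vector field agree with those of $(N_D,\omega)$ in a collar of the boundary. Consequently $\partial \widetilde{N}$ is canonically identified with $\partial N_D$ as a contact manifold, and $(\widetilde{N},\widetilde{\omega})$ is still a symplectic cap of $(-Y_D,\xi_D)$.

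The second step is to identify $(\widetilde{N},\widetilde{\omega})$ with a GS realization of $D'$. Topologically, the symplectic neighborhood theorem for $\omega$-orthogonal symplectic divisors shows that $\widetilde{N}$ is diffeomorphic to the plumbing $N_{D'}$. It remains to check that $(D',\widetilde{\omega})$ satisfies the positive GS criterion, so that Proposition \ref{prop:unique-contact} applies. This is a linear algebra verification: the new area vector is $a' = (a_1,\dots,a_{i-1},\, a_i - \epsilon,\, \epsilon,\, a_{i+1} - \epsilon,\, a_{i+2},\dots,a_r)$, and using the change of basis from the proof of Lemma \ref{lemma:toric eq topological} (which identifies $Q_{D'}$ with $(-1) \oplus Q_D$), any positive solution $z$ of $Q_D z = a$ lifts, for sufficiently small $\epsilon$, to a positive solution $z'$ of $Q_{D'} z' = a'$.

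Combining the two steps: by Proposition \ref{prop:unique-contact}, the contact structure on $\partial N_{D'}$ induced by any GS realization depends only on the topological divisor $D'$ and equals $\xi_{D'}$; since $(\widetilde{N},\widetilde{\omega})$ is one such realization whose boundary contact structure is $\xi_D$ by construction, we conclude $\xi_{D'} \cong \xi_D$. The main (modest) technical obstacle is the second step, where one must simultaneously arrange $\widetilde{\omega}$-orthogonality of the blown-up configuration via a boundary-supported perturbation and produce a positive GS solution $z'$ for the enlarged system; everything else is a formal consequence of the uniqueness statement in Proposition \ref{prop:unique-contact}.
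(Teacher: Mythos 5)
The paper does not actually prove this proposition here: it is imported from \cite{LiMi-ICCM} and stated without proof, so there is no in-paper argument to compare against. Your argument is correct and is the natural one: a small-weight interior symplectic blow-up at the node converts a GS neighborhood of $D$ into a concave (resp.\ convex) regular neighborhood of $D'$ without altering a collar of the boundary, and the explicit solution $z'$ obtained from $z$ by inserting the entry $z_i+z_{i+1}-\epsilon$ for the exceptional component shows the positive (resp.\ negative) GS criterion is inherited for $\epsilon$ small. The one point worth making explicit is that your final step invokes Proposition \ref{prop:unique-contact} in the slightly stronger form that \emph{any} concave regular neighborhood of an $\omega$-orthogonal divisor satisfying the GS criterion induces $\xi_{D'}$ on its boundary, not only the neighborhoods literally produced by the GS construction; this stronger form is how the uniqueness statement is used elsewhere in the paper (e.g.\ in the proof of Theorem \ref{thm:fillability}), so the appeal is consistent with the paper's conventions, but it should be stated rather than left implicit.
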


In light of Proposition \ref{prop:contact toric eq}, we see that toric equivalence is a natural equivalence on divisors. For the study of contact structures and symplectic fillings, it suffices to consider toric minimal divisors. In particular it is used in the proof of Theorem \ref{thm:embeddable=fillable} (see Theorem \ref{thm:fillability}).


%

\subsection{Trichotomy}\label{section:trichotomy}
In this subsection, we characterize the convexity of circular spherical divisors. We start with the following lemma about more general topological divisors. By a subdivisor $ D' $ of a topological divisor $ D $, we mean the union of a subset of the components so that $ D' $ is still connected.

\begin{lemma}\label{lemma:subdivisor}
	A topological divisor $ D $ is concave if it contains a concave subdivisor $ D' $.
\end{lemma}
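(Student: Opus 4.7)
The plan is to reduce the claim to the inductive step of adding a single adjacent component, and then to resolve that step by a direct block-matrix computation. Since $D$ is connected and contains $D'$ as a connected subdivisor, I can order the components of $D\setminus D'$ as $C_1,\ldots,C_k$ so that each $C_j$ intersects at least one component of the intermediate divisor $D'\cup C_1\cup\cdots\cup C_{j-1}$. Induction on $j$ then reduces the proof to the following claim: if $\tilde D$ is concave and $C$ is a new component with at least one positive intersection with $\tilde D$, then $\tilde D\cup C$ is also concave.

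For this inductive step, write the intersection matrix of $\tilde D\cup C$ in block form as
\[
Q_{\tilde D\cup C}=\begin{pmatrix} Q_{\tilde D} & b \\ b^{T} & s \end{pmatrix},
\]
where $b\in\ZZ_{\geq 0}^{r'}$ records the intersection numbers of $C$ with the components of $\tilde D$ and has at least one strictly positive entry, and $s=[C]^2\in\ZZ$. By concavity of $\tilde D$, pick $z^*\in(\RR_+)^{r'}$ with $Q_{\tilde D}z^*\in(\RR_+)^{r'}$, and try the ansatz $z=(Mz^*,1)$ for a parameter $M>0$. The first $r'$ coordinates of $Q_{\tilde D\cup C}z$ equal $MQ_{\tilde D}z^*+b$, which is coordinatewise positive for all sufficiently large $M$, while the last coordinate equals $M(b^{T}z^*)+s$, which is positive for large $M$ because $b^{T}z^*>0$ (a nonnegative vector with at least one positive entry paired against a strictly positive vector). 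Thus any sufficiently large $M$ produces a witness that $\tilde D\cup C$ is concave.

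The main (and quite mild) obstacle is arranging the inductive ordering of the components of $D\setminus D'$, for which one needs connectedness of $D$ to guarantee that at each stage a new component can be found adjacent to what has already been built. Once the ordering is in place nothing else is delicate, since the linear inequalities defining concavity can always be satisfied by scaling the existing witness $z^{*}$ large enough against the new coordinate, so that adjoining a further component never destroys the positivity achieved at the previous stage.
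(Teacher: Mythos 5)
Your proposal is correct and follows essentially the same route as the paper: reduce to adjoining one component at a time, write the intersection matrix in block form, and extend the existing witness vector. The only cosmetic difference is the normalization --- the paper keeps the old witness $z^*$ fixed and chooses the new coordinate $z_r>0$ sufficiently small, while you scale $z^*$ by a large $M$ and fix the new coordinate at $1$; these are the same family of witnesses up to an overall positive rescaling, which does not affect the positivity conditions.
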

\begin{proof}
	From $ D' $ we can always get back to $ D  $ by successively adding components to $ D' $, so without loss of generality we could assume $ D=\cup_{i=1}^{r} C_i $ can be obtained from $ D'=\cup_{i=1}^{r-1} C_i $ by adding exactly one component $ C_r $.
	
	Note that the intersection matrix $ Q_{D'}=(q_{i,j}) $ of $ D' $ is a $ (r-1)\times (r-1) $ symmetric matrix with nonnegative off-diagonal entries. Then we could write \[
	Q_D=\left( \begin{array}{@{}c|c@{}}
	& q_{1,r}\\
	\raisebox{-5pt}{{\huge\mbox{{$Q_{D'}$}}}} & \vdots\\
	& q_{r-1,r}\\ \hline 
	q_{r,1}\dots q_{r,r-1} & q_{r,r}
	\end{array}\right),
	\]
	where $ q_{i,r}=q_{r,i}\ge 0 $ for $ i=1,\dots,r-1 $ and at least one of them is strictly positive. Assume there exists $ z'\in (\RR_+)^{r-1} $ such that $ a'=Q_{D'}z'\in (\RR_+)^{r-1} $ and let $ z $ be a vector in $ (\RR_+)^r $ such that $ z_i=z_i' $ for $ i=1,\dots,r-1 $. Let $ a=Qz $. Then we have $ a_i=a_i'+q_{i,r}z_r\ge a_i'>0 $ for $ i\le r-1 $ and $ a_r=\sum_{i\le r-1} q_{r,i}z_i' + q_{r,r}z_r $. Since there is at least one strictly positive $ q_{r,i} $, we have $ \sum_{i\le r-1} q_{r,i}z_i'>0 $. If $ q_{r,r}\ge 0 $, then for any $ z_r>0 $ we have $ a_r>0 $. If $ q_{r,r}<0 $, then we can choose $ z_r\in (0,-\dfrac{\sum_{i\le r-1} q_{r,i}z_i'}{q_{r,r}}) $ and get $ a_r>0 $.
\end{proof}
\begin{cor}\label{cor:concave criterion}
	A topological divisor with at least two components and at least one component with nonnegative self-intersection must be concave.
\end{cor}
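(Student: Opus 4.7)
The plan is to reduce to a very small concave subdivisor using Lemma \ref{lemma:subdivisor}, and then verify the concavity condition by hand for that small piece.

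More concretely, let $D$ be a topological divisor with $r \geq 2$ components, and let $C_i$ be a component with $s_i := C_i^2 \geq 0$. Since $D$ is connected and has at least two components, $C_i$ has at least one neighbor $C_j$ in the dual graph. Set $D' := C_i \cup C_j$; this is a connected topological subdivisor with intersection matrix
\[
Q_{D'} = \begin{pmatrix} s_i & q \\ q & s_j \end{pmatrix},
\]
where $q = [C_i]\cdot[C_j] \geq 1$ (since they are adjacent) and $s_i \geq 0$. I would then exhibit $z=(z_1,z_2) \in (\mathbb{R}_+)^2$ satisfying $Q_{D'} z \in (\mathbb{R}_+)^2$. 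For instance, setting $z_2 = 1$, the first entry of $Q_{D'} z$ is $s_i z_1 + q > 0$ for any $z_1 > 0$, and the second entry is $q z_1 + s_j$, which can be made positive by choosing $z_1 > \max(0, -s_j/q)$. Hence $D'$ satisfies the positive GS criterion and is concave.

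Applying Lemma \ref{lemma:subdivisor} to the concave subdivisor $D' \subset D$, we conclude $D$ is concave.

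There is essentially no obstacle here; the only subtle point is that a single component with self-intersection zero does not satisfy the positive GS criterion on its own (since $s_i \cdot z_1 = 0$), so we cannot simply take $D' = C_i$. This is exactly why the hypothesis "at least two components" is needed: it guarantees the existence of a neighbor $C_j$, and the positive intersection number $q \geq 1$ then provides enough flexibility to push both coordinates of $Q_{D'} z$ strictly positive regardless of the sign of $s_j$.
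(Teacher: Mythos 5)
Your proposal is correct and follows essentially the same route as the paper: both pass to the two-component subdivisor consisting of the nonnegative component and an adjacent one, exhibit an explicit $z\in(\mathbb{R}_+)^2$ with $Q_{D'}z\in(\mathbb{R}_+)^2$ (the paper uses $z=(1,1)^T$ or $z=([-b/n]+1,1)^T$ depending on the sign of the second self-intersection, which is the same choice you make), and then invoke Lemma \ref{lemma:subdivisor}. Your closing remark about why a single self-intersection-zero component does not suffice on its own is a nice observation, though not needed for the argument.
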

\begin{proof}
	Such a divisor would have a subdivisor with intersection matrix looking like \[
	Q=\begin{pmatrix}
	a & n\\ n & b
	\end{pmatrix},
	\]
	where $ a\ge 0 $ and $ n\ge 1 $. It suffices to find $ z\in (\RR_+)^2 $ such that $ Qz\in (\RR_+)^2 $.
	If $ b\ge 0 $, we can take $ z=(1,1)^T $. If $ b<0 $, we can take $ z=([-\dfrac{b}{n}+1],1)^T $. So this subdivisor is concave and thus the original divisor is concave by Lemma \ref{lemma:subdivisor}.
\end{proof}

Generalizing the symplectic Kodaira dimension of a symplectic 4-manifold, the following contact Kodaira dimension for contact 3-manifolds was proposed in \cite{LiMa16-kodaira}, based on the type of symplectic cap it admits.
\begin{definition}[\cite{LiMa16-kodaira}, \cite{LiMaYa14-CYcap}]\label{def:contact kod}
	Let $(P, \omega)$ be a  concave symplectic 4-manifold with contact boundary $(Y, \xi)$. $(P, \omega)$  is called a Calabi-Yau  cap of $(Y, \xi)$ if $c_1(P,\omega)$ is a torsion class,  
	and it is called a uniruled cap of $(Y, \xi)$ if there is a contact primitive $\beta$ on the boundary such that $c_1(P,\omega)\cdot  [(\omega, \beta)]>0$.

	The contact Kodaira  dimension   of a contact 3-manifold $(Y, \xi)$ is  defined in terms of uniruled caps and Calabi-Yau caps. 
	Precisely, $Kod(Y, \xi)=-\infty$ if $(Y, \xi)$  has a uniruled cap, $Kod(Y, \xi)=0$ if it has a Calabi-Yau cap but no uniruled caps, $Kod(Y, \xi)=1$ if it has no Calabi-Yau caps or uniruled caps. 
	
\end{definition}

We have the following characterization of convexity for circular spherical divisors and contact Kodaira dimension of their boundaries.
\begin{prop}\label{prop: convex-concave}
	Let $ D $ be a circular spherical divisor and $ Q_D $ its intersection matrix.
	\begin{enumerate}[\indent $ (1) $]
		\item If $Q_D$ is negative definite, then $D$ is convex. In addition, if $D$ is anti-canonical in $ (X,\omega) $, 
		then $(Y_D, \xi_D)$ has  $Kod(Y_D,\xi_D)\leq 0$. 
		\item If $b^+(Q_D)> 0$, then $ D $ is concave. In addition, if $ D $ is symplectically embeddable, then 
		$(-Y_D, \xi_D)$ has  $Kod(-Y_D,\xi_D)=-\infty$.
		\item If $b^+(Q_D)=0$ and $Q_D$ is not negative definite, then $ D $ is neither concave or convex.
	\end{enumerate}
\end{prop}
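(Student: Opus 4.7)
My plan is to handle the three parts separately, with each GS-criterion assertion first established by direct linear algebra (after a reduction via toric equivalence when helpful), followed by the contact Kodaira dimension statement when applicable.

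For part (1), the convexity of a negative-definite $D$ reduces to the observation that $-Q_D$ is a symmetric positive-definite matrix with nonpositive off-diagonal entries, i.e.\ a Stieltjes (symmetric M-) matrix. For connected plumbing graphs (circular spherical divisors are connected since $r \geq 2$), the inverse of such a matrix has strictly positive entries, so for any target area vector $a \in (\RR_+)^r$, the unique solution to $Q_D z = a$ lies in $(\RR_{<0})^r$, verifying the negative GS criterion. For the Kodaira dimension bound when $D$ is anti-canonical in $(X,\omega)$, my approach is to use the concave cap $P = X \setminus \mathrm{int}(N_D)$. Since $c_1(X) = PD[D]$ lies in the image of $H^2(N_D, \partial N_D) \to H^2(X)$ (by excision and the long exact sequence of $(X, P)$), the restriction $c_1(X)|_P$ vanishes, making $P$ a Calabi-Yau cap and giving $Kod(Y_D, \xi_D) \leq 0$.

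For part (2), I would first reduce concavity to the case that $D$ is toric minimal or of the form $(-1, p)$, using that toric equivalence preserves the GS criterion. In the toric minimal case, Lemma~\ref{lem: not negative semi-definite => at least one non-negative} ensures some $s_i \geq 0$, so a two-element subdivisor containing $C_i$ (together with an adjacent component) is concave by Corollary~\ref{cor:concave criterion}, and concavity of $D$ follows from Lemma~\ref{lemma:subdivisor}. The divisors $(-1, p)$ with $b^+ \geq 1$ are pinned down to $p \geq -3$ and handled by an explicit choice of $z$. For the contact Kodaira dimension: by Theorem~\ref{thm:embeddable=rigid} and Lemma~\ref{lemma:rational-embed}, a symplectically embeddable $D$ with $b^+ \geq 1$ is actually anti-canonical in a rational surface $(X, \omega)$. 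I then claim $(N_D, \omega|_{N_D})$ is a uniruled cap of $(-Y_D, \xi_D)$: since $X$ is rational, $c_1(X) \cdot [\omega_X] > 0$, and because $c_1(X) = PD[D]$ is localized in a neighborhood of $D \subset N_D$, this positivity translates (with an appropriate contact primitive $\beta$) into $c_1(N_D) \cdot [(\omega, \beta)] > 0$.

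For part (3), I would apply Lemma~\ref{lem: not negative semi-definite => at least one non-negative} to reduce any strictly negative semi-definite $D$ to one of the standard forms $D_n = (\underbrace{-2, \ldots, -2}_n)$ with $n \geq 2$ or $(-1, -4)$, using that toric equivalence preserves both the positive and negative GS criteria. In each standard form, $\ker(Q_D)$ admits a vector $v$ with all strictly positive entries, namely $v = (1, \ldots, 1)$ for $D_n$ and $v = (2, 1)$ for $(-1, -4)$. This $v$ simultaneously obstructs both GS criteria: for any candidate $z$, $v^T (Q_D z) = (Q_D v)^T z = 0$, yet if $Q_D z \in (\RR_+)^r$ then $v^T (Q_D z) > 0$, a contradiction; this argument is insensitive to the sign constraint on $z$.

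The main technical obstacle I anticipate is making the uniruled cap claim in part (2) fully rigorous: the relative cohomology class $[(\omega, \beta)] \in H^2(N_D, \partial N_D)$ and the sign of its pairing with $c_1(N_D)$ must be compared carefully against the global quantity $c_1(X) \cdot [\omega_X] > 0$ in the ambient rational surface, and to do this cleanly I would likely need to invoke a cap-filling decomposition formula from \cite{LiMaYa14-CYcap}. The remaining steps---Stieltjes matrix theory in (1), the subdivisor reduction in (2), and the kernel-vector obstruction in (3)---should be comparatively routine.
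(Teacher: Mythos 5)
Parts (1) and (3) of your proposal are correct and essentially match the paper's argument: in (1) the paper simply asserts that negative definiteness gives a unique solution $z\in(\RR_-)^r$ to $Q_Dz=a$ (your Stieltjes/M-matrix observation is the standard justification, using that the cycle graph is connected so $-Q_D$ is irreducible), and the Calabi--Yau cap argument for $P=X-N_D$ is identical. In (3) your kernel-vector obstruction ($v=(1,\dots,1)$ for $D_n$, $v=(2,1)$ for $(-1,-4)$) is a slightly slicker packaging of the paper's direct computation with second differences, and it is valid. The concavity claims in (2) (toric-minimal reduction, Corollary \ref{cor:concave criterion} via a two-component subdivisor, and an explicit $z$ for $(-1,p)$ with $p\ge -3$) also agree with the paper.

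The genuine gap is in the uniruled-cap step of part (2), and it is precisely the point you flagged but did not resolve. To realize $N_D$ as a concave cap of $(-Y_D,\xi_D)$ \emph{sitting inside the ambient pair} $(X,\omega)$ --- which is what lets you compute $c_1(N_D)\cdot[(\omega,\beta)]$ as $[D]\cdot[\omega]$ --- you must invoke Theorem \ref{thm:concave}(2), whose hypothesis is that $\omega|_{Y_D}$ is exact. By Lemma \ref{lem: non-degenerate intersection form} this amounts to showing the area vector $a$ lies in the image of $Q_D$, which is automatic when $Q_D$ is nonsingular but fails to be obvious exactly in the positive parabolic family $(1,1,p)$ of Theorem \ref{thm:list}, where $Q_D$ is degenerate. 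The paper handles this by using the homological constraints of Lemma \ref{lemma:topological cyclic} ($[C_1]=[C_2]$, hence $a_1=a_2$, and $a_1=a_2=a_3$ when $p=1$) to exhibit a solution of $Q_Dz=a$ by hand. Your proposal never addresses exactness, so the uniruled-cap claim is unproved in the one family where it actually requires an argument. Separately, your justification ``since $X$ is rational, $c_1(X)\cdot[\omega_X]>0$'' is not correct as stated: a many-times blown-up rational surface can have $c_1\cdot[\omega]\le 0$ for suitable $\omega$. The positivity you need comes instead from $c_1(X)=PD[D]$ together with $[D]\cdot[\omega]=\sum_i\omega(C_i)>0$ because $D$ is a symplectic divisor; with that substitution and the exactness verification supplied, part (2) closes.
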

\begin{proof}
	Since being concave or convex is preserved under toric blow-up, we could assume $ D $ is either toric minimal or of the form $ (-1,p) $ and make use of the classification in Lemma \ref{lem: not negative semi-definite => at least one non-negative}. 
	
	{\bf Case (1)}: Suppose $ Q_D $ is negative definite. Then for any $ a\in (\RR_+)^r $, there is a unique solution $ z\in (\RR_-)^r $ such that $ Q_Dz=a $. So $ D $ is convex. Since $D$ is anti-canonical, let $ (X,\omega,D) $ be a symplectic Looijenga pair. Then by Theorem \ref{thm:concave} there is a convex neighborhood $ N_D $ of $ D $ in $ (X,\omega) $ and $P=X-N_D$ is a symplectic cap of $Y_D$ with vanishing $c_1$, i.e. a Calabi-Yau cap. It follows that $Kod(Y_D, \xi_D)\leq 0$. 
	
	{\bf Case (2)}: Suppose $ b^+(Q_D)>0 $. If $ D $ is toric minimal, there exists a component $ C_j $ with non-negative self-intersection. By Corollary \ref{cor:concave criterion}, $ D $ must be concave. If $ D $ is of the form $ (-1,p) $, we must have $ p\ge -3 $. Then for $ z=(7,4)^T $, we have $ Q_Dz=(1,14+4p)^T\in (\RR_+)^2 $, i.e. $ D $ is concave. 
	If $D$ is symplecticaly embeddable, then $D$ is anti-canonical by Theorem \ref{thm:embeddable=rigid}.
	Let $ (X,\omega,D) $ be a symplectic Looijenga pair and assume $ \omega|_{Y_D} $ is exact, then by Theorem \ref{thm:concave} there is a concave neighborhood $ N_D $ of $ D $ in $ (X,\omega) $ with boundary $ (-Y_D,\xi_D) $, up to a local symplectic deformation. For any contact primitive $\alpha$ of $\omega|_{Y_D}$,  we have 
	$c_1(N_D)\cdot [(\omega,\alpha)]=   c_1(X)|_{N_D} \cdot [(\omega,\alpha)] = D\cdot  [(\omega, \alpha)]=D\cdot [\omega]>0$, i.e. $ N_D $ is a uniruled cap for $ (-Y_D,\xi_D) $.
	
	Now we show that $ \omega|_{Y_D} $ is exact. By Lemma \ref{lem: non-degenerate intersection form}, it suffices to show that for any $ a\in (\RR_+)^r $, there is a solution $ z\in \RR^r $ to the equation $ Qz=a $. And it suffices to show it for all $ D $ listed in Theorem \ref{thm:list}. By Lemma \ref{lem: property of continuous fraction}, we only need to look at the positive parabolic case, because $ Q_D $ is nondegenerate in all other cases. Let $ a=(a_1,a_2,a_3)=([\omega]\cdot [C_1],[\omega]\cdot [C_2],[\omega]\cdot [C_3]) $ for the divisor $ (1,1,p) $. First if $ p=1 $, then by (4) of Lemma \ref{lemma:topological cyclic}, we have $ [C_1]=[C_2]=[C_3] $ and thus $ a_1=a_2=a_3 $. Then $ z_1=z_2=z_3=\dfrac{a_1}{3} $ is a solution to $ Q_Dz=a $. If $ p\neq 1 $, then again by (4) and (5) of Lemma \ref{lemma:topological cyclic}, we have $ [C_1]=[C_2] $ and thus $ a_1=a_2 $. Solving the equation $ Q_Dz=a $	is thus equivalent to solving \[
	\begin{pmatrix}
	1&1\\1&p
	\end{pmatrix}\begin{pmatrix}
	z_1\\z_3
	\end{pmatrix}=\begin{pmatrix}
	a_1\\a_3
	\end{pmatrix},
	\]
	which is solvable as $ p\neq 1 $. 
	
	{\bf Case (3)}: Suppose $ b^+(Q_D)=0 $ and $ Q_D $ is not negative definite. If $ D $ is toric minimal, it is a cycle of self-intersection $-2$ spheres. Note that the $ i $th-entry of $ Q_{D}z $ is $ (z_{i+1}-z_i)-(z_i-z_{i-1}) $ for all $ i $, where the index is taken to be modulo $ r $. So it is easy to see that for any $z \in \mathbb{R}^{r}$, $Q_{D}z$ cannot have all entries being positive. If $ D $ is of the form $ (-1,p) $, we must have $ p=-4 $. Similarly, $ Q_Dz=(-(z_1-2z_2),2(z_1-2z_2))^T $ can not be in $ (\RR_+)^2 $ for any $ z $. So $ D $ cannot be convex or concave.
\end{proof}

\subsection{Symplectic fillings for concave $ D $}\label{section:fillability}


In this section we prove Theorem \ref{thm:embeddable=fillable}, which is splitted into Theorem \ref{thm:fillability} and \ref{thm: as a support of ample line bundle}. 
We start by collecting some homological information related to a cycle of spheres in a closed 4-manifold $ X $ with $b_1(X)=1$, which will also be useful in Section \ref{section:convex Stein}.
\begin{lemma}\label{lemma:U+V}
	Let $X=U\cup_Y V$ be a closed 4-manifold obtained by gluing $U$ and $V$ along their common boundary $Y$. Suppose $b_3(U)=0$ and $b_1(X)=0$. Then we have $b_3(V)=0$ and $b^0(V)+b_1(V)=b^0(U)+b_1(U)=b_1(Y)$.
\end{lemma}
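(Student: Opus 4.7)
\medskip

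The plan is to reduce everything to the long exact sequence of the pair $(W,Y)$ for $W=U,V$, combined with Mayer--Vietoris and Poincar\'e--Lefschetz duality, working with rational coefficients throughout so that Betti numbers behave well under exact sequences. I will assume $Y$, $U$ and $V$ are connected, which is the relevant case (in applications $Y=Y_D$ is a torus bundle). The key algebraic observation is that for a compact oriented $4$-manifold $W$ with connected boundary $Y$, the nullity $b^0(W)$ of the intersection form equals $\dim\ker\bigl(H_2(W;\Q)\to H_2(W,Y;\Q)\bigr)$, which by the long exact sequence is the same as $\dim\mathrm{image}\bigl(H_2(Y;\Q)\to H_2(W;\Q)\bigr)$.

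First I would show $b_3(V)=0$. Since $X$ is a closed oriented $4$-manifold, Poincar\'e duality gives $b_3(X)=b_1(X)=0$. The Mayer--Vietoris sequence of $X=U\cup_Y V$ contains
\[
0=H_4(U)\oplus H_4(V)\to H_4(X)\to H_3(Y)\to H_3(U)\oplus H_3(V)\to H_3(X)=0,
\]
and the map $H_4(X;\Q)\to H_3(Y;\Q)$ sends $[X]\mapsto [Y]$, which is an isomorphism $\Q\to\Q$ by connectedness of $Y$. Hence $H_3(Y;\Q)\to H_3(U;\Q)\oplus H_3(V;\Q)$ is the zero map, so $H_3(U;\Q)\oplus H_3(V;\Q)$ injects into $H_3(X;\Q)=0$ and in particular $b_3(V)=0$.

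Next I would derive the formula $b^0(W)+b_1(W)=b_1(Y)+b_3(W)$ for any compact oriented $4$-manifold $W$ with connected boundary $Y$. Consider the tail of the long exact sequence of $(W,Y)$:
\[
H_2(W)\xrightarrow{\,Q_W\,} H_2(W,Y)\to H_1(Y)\to H_1(W)\to H_1(W,Y)\to H_0(Y)\to H_0(W),
\]
where the last map is an isomorphism $\Q\to\Q$, so $H_1(W)\to H_1(W,Y)$ is surjective. Poincar\'e--Lefschetz duality over $\Q$ identifies $H_2(W,Y;\Q)\cong H^2(W;\Q)$ and $H_1(W,Y;\Q)\cong H^3(W;\Q)$, giving $b_2(W,Y)=b_2(W)$ and $b_1(W,Y)=b_3(W)$. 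Since $\mathrm{rank}(Q_W)=b_2(W)-b^0(W)$ and the image of $Q_W$ is the kernel of $H_2(W,Y)\to H_1(Y)$, one gets $\dim\mathrm{coker}(Q_W)=b^0(W)$, and the exact subsequence
\[
0\to\mathrm{coker}(Q_W)\to H_1(Y)\to H_1(W)\to H_1(W,Y)\to 0
\]
yields by alternating sum the identity $b^0(W)+b_1(W)=b_1(Y)+b_3(W)$.

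Applying this to $W=U$ with $b_3(U)=0$ gives $b^0(U)+b_1(U)=b_1(Y)$, and to $W=V$ with $b_3(V)=0$ (just proved) gives $b^0(V)+b_1(V)=b_1(Y)$. I do not expect any serious obstacle; the only care needed is to use Poincar\'e--Lefschetz duality to replace the relative groups $H_*(W,Y)$ by dual absolute groups, and to keep track of the connectedness hypothesis ensuring $H_0(Y)\to H_0(W)$ is an isomorphism.
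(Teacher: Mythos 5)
Your proposal is correct and follows essentially the same route as the paper: Mayer--Vietoris together with the isomorphism $H_4(X;\Q)\to H_3(Y;\Q)$ to get $b_3(V)=0$, then the long exact sequence of the pair $(W,Y)$ combined with Lefschetz duality to extract $b^0(W)+b_1(W)=b_1(Y)$. Your version is a bit more careful in spelling out the surjectivity of $H_1(W)\to H_1(W,Y)$ and in tracking the $b_3(W)$ term explicitly, but the argument is the same.
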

\begin{proof}
	Since $H_4(X)\to H_3(Y)$ is an isomorphism, we have $0\to H_3(V;\QQ)\oplus H_3(U;\QQ)\to H_3(X;\QQ)=0$ and thus $b_3(V)=0$. Note that $b^0(V)$ is the kernel dimension of the map $ H_2(V;\QQ)\to H_2(V,Y;\QQ) $. Then we have the long exact sequence \[
		0\to \QQ^{b^0(V)}\to H_2(V;\QQ)\to H_2(V,Y;\QQ)\to H_1(Y;\QQ) \to H_1(V;\QQ)\to 0,
	\]
	which gives $ b^0(V)+b_1(V)=b_1(Y) $ by Lefschetz duality.
\end{proof}

Recall that the charge $q(D)=12-D^2-r(D)$ is invariant under toric equivalence. 
\begin{lemma} [cf. Theorem 2.5 and Theorem 3.1 in \cite{GoLi14}]  \label{lem: homology of neighborhood} 
	Let  $D$ be a cycle of spheres in a closed 4-manifold $X$ with $b_1(X)=0$ and $V=X-N_D$, where $N_D$ is a plumbing of $D$. Then we have 
	\[H_2(N_D)=\Z^{r(D)}=H^2(N_D), H_1(N_D)=H^1(N_D)=\Z,  H_3(N_D)=H^3(N_D)=0.\]
	If $Q_D$ is nonsingular, then we have \[b_1(Y_D)=1,b_1(V)=0,b^0(V)=1,b_2(V)=b_2(X)+1-r(D).\]
	If $(X,D,\omega)$ is a symplectic Looijenga pair, then $e(V)=q(D)$ and $\sigma(V)=4-q(D)-2b^+(D)-b^0(D)$.
\end{lemma}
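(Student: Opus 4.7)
The plan is to prove the three parts in sequence, each building on the previous, using a combination of the plumbing structure, Mayer--Vietoris, and Novikov additivity. Throughout, the key observation is that $N_D$ deformation retracts onto the cycle of spheres $D$, whose homotopy type is easy to determine.

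For part $(1)$, I would compute the homology of $N_D$ directly from its plumbing description. Using a CW decomposition of $D$ with one $0$-cell at each double point, one $1$-cell joining the two marked points on each sphere, and one $2$-cell for each sphere (capping off the resulting arc on the sphere), the $1$-skeleton is homotopy equivalent to a cycle on $r$ vertices, contributing $H_1 = \Z$, while each $2$-cell attaches trivially to the $1$-skeleton up to homotopy, contributing $H_2 = \Z^r$ generated by the $[C_i]$. This yields $H_0(N_D) = H_1(N_D) = \Z$, $H_2(N_D) = \Z^r$, and the higher homology vanishes. Cohomology follows by universal coefficients (no torsion appears, so $H^i(N_D) \cong H_i(N_D)$).

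For part $(2)$, I first invoke Lemma~\ref{lem: property of continuous fraction}: since $Q_D$ is nonsingular, $\text{Coker}(Q_D)$ is finite, so $b_1(Y_D) = 1$. Now apply Lemma~\ref{lemma:U+V} with $U = N_D$: since $b_3(N_D) = 0$ and $b_1(X) = 0$, we obtain $b_3(V) = 0$ and
\[
b^0(V) + b_1(V) = b^0(N_D) + b_1(N_D) = 0 + 1 = 1,
\]
where $b^0(N_D) = 0$ because it equals the dimension of $\ker Q_D$. To split this as $b_1(V) = 0$ and $b^0(V) = 1$, I use the Mayer--Vietoris sequence for $X = N_D \cup V$: the surjection $H_1(Y_D;\QQ) \twoheadrightarrow H_1(N_D;\QQ) \oplus H_1(V;\QQ)$ (forced by $H_1(X;\QQ) = 0$) and $b_1(Y_D) = 1 = b_1(N_D)$ force $b_1(V) = 0$, hence $b^0(V) = 1$. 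Then $b_2(V)$ is extracted from Euler characteristic additivity $\chi(X) = \chi(N_D) + \chi(V)$ (since $\chi(Y_D) = 0$), using $\chi(N_D) = r$ and $\chi(X) = 2 + b_2(X)$.

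For part $(3)$, the Looijenga condition $PD[D] = c_1(X)$ and the fact that $X$ is rational with $b^+ = 1$ give $c_1(X)^2 = 10 - b_2(X)$ via $c_1^2 = 2\chi + 3\sigma$, whence $D^2 = 10 - b_2(X)$. Combining this with the $b_2(V)$ formula from part $(2)$ yields
\[
e(V) = 1 + b_2(V) = 2 + b_2(X) - r = 12 - D^2 - r(D) = q(D).
\]
For the signature, I apply Novikov additivity $\sigma(X) = \sigma(N_D) + \sigma(V)$. Using $\sigma(X) = 2 - b_2(X)$ and $\sigma(N_D) = b^+(D) - b^-(D)$ (which follows from part $(1)$ together with the fact that the intersection form on $H_2(N_D)$ is $Q_D$), and the identity $b^-(D) = r - b^+(D) - b^0(D)$, a short algebraic manipulation converts $\sigma(V)$ into $4 - q(D) - 2b^+(D) - b^0(D)$.

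The main technical point is really the argument in part $(2)$ isolating $b_1(V) = 0$ from $b^0(V) + b_1(V) = 1$; everything else is bookkeeping via Mayer--Vietoris, Euler characteristic, and Novikov additivity. The passage to part $(3)$ is purely formal once one knows $D^2 = c_1(X)^2$ and the rational-surface signature identity.
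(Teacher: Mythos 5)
Your proof follows essentially the same route as the paper's: the homotopy type of the plumbing for part (1), Lemma \ref{lem: property of continuous fraction} plus Lemma \ref{lemma:U+V} and the Mayer--Vietoris surjection $H_1(Y_D;\QQ)\to H_1(N_D;\QQ)\oplus H_1(V;\QQ)$ to isolate $b_1(V)=0$ for part (2), and additivity of $e$ and $\sigma$ together with $e(X)=12-D^2$, $\sigma(X)=D^2-8$ for part (3). The one point to repair is in part (3): the third clause of the lemma does \emph{not} assume $Q_D$ nonsingular (and indeed the Looijenga pairs $(1,1,p)$, $p\le 1$, have singular $Q_D$), yet your chain $e(V)=1+b_2(V)=2+b_2(X)-r$ invokes $b_1(V)=b_3(V)=0$, $b^0(V)=1$ and the $b_2(V)$ formula, all of which you established only under the nonsingularity hypothesis. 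The fix is immediate and is what the paper does: bypass $b_2(V)$ entirely and write $e(V)=e(X)-e(N_D)=(12-D^2)-r(D)=q(D)$, which needs only additivity of the Euler characteristic. With that substitution the argument covers the full statement; everything else, including the signature computation via Novikov additivity and $b^-(D)=r-b^+(D)-b^0(D)$, is correct and matches the paper.
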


\begin{proof}
	The homology and cohomology of $N_D$ are straightforward to compute since $N_D$ deformation retracts to $D$. 
	
	As in Theorem 2.5 of \cite{GoLi14}, we have $H_1(Y_D)=\ZZ\oplus Coker(Q_D)$.	
	Now suppose $Q_D$ is nonsingular. Then $b_1(Y)=1$.
	By Mayer-Vietoris sequence \[
		H_1(Y;\QQ)\to H_1(N_D;\QQ)\oplus H_1(V;\QQ)\to 0, \]
	we get that $b_1(Y)\ge 1+b_1(V)$. So $b_1(V)=0$ and then $b^0(V)=1$, since $ b^0(V)+b_1(V)=b_1(Y) $ by Lemma \ref{lemma:U+V}. 
	Since $e(V)=e(X)-e(N_D)=2+b_2(X)-r(D)$ and by definition $e(V)=1+b_2(V)$, we have $b_2(V)=b_2(X)+1-r(D)$.

	If $(X,D,\omega)$ is a symplectic Looijenga pair, then $e(X)=12-D^2$ and $\sigma(X)=D^2-8$. Note $e(N_D)=r(D)$ and $\sigma(N_D)=b^+(D)-b^-(D)$. By additivity, we get $e(V)=12-D^2-r(D)=q(D)$ and $\sigma(V)=D^2-8-\sigma(N_D)=D^2-12+r(D)-2b^+(D)-b^0(D)+4=4-q(D)-2b^+(D)-b^0(D)$.
\end{proof}

\begin{theorem}\label{thm:fillability}
	Let $ D $ be a concave circular spherical divisor. Then $ (-Y_D,\xi_D) $ is symplectic fillable if and only if $ D $ is toric equivalent to one of Theorem \ref{thm:list}. Moreover, there are finitely many minimal symplectic fillings. All minimal symplectic filling $(W,\omega_W)$ of such $ (-Y_D,\xi_D) $ have $ c_1(W,\omega_W)=0$. They all have $b^+(W)=b_3(W)=0$, $b^0(W)=1$, $b_1(W)=b_1(Y_D)-1$, $b^-(W)=q(D)-2+b^0(D)$ and hence a unique rational homology type. In particular, \begin{itemize}
		\item for $D$ in case (1)(3)(4), we have $b^-(W)=q(D)-2$, $b_2(W)=q(D)-1$ and $b_1(W)=1$;
		\item for $D$ in case (2) with $p=1$, we have $b^-(W)=0$, $b_2(W)=1$ and $b_1(W)=2$;
		\item for $D$ in case (2) with $p<1$, we have $b^-(W)=q(D)-1$, $b_2(W)=q(D)$ and $b_1(W)=1$.
	\end{itemize}
\end{theorem}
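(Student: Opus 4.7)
The plan is to prove the theorem in four stages: fillability, finiteness, $c_1 = 0$, and Betti number computations.

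\textbf{Fillability and first auxiliary statements.} If $D$ is toric equivalent to one of the divisors in Theorem~\ref{thm:list}, then by Theorem~\ref{thm:embeddable=rigid} $D$ is anti-canonical, realized as a symplectic Looijenga pair $(X, D, \omega)$; concavity of $D$ then gives a symplectic cap $N_D \subset X$ whose complement $W = X \setminus \mathrm{int}(N_D)$ fills $(-Y_D, \xi_D)$. Conversely, given a minimal filling $(W, \omega_W)$, capping with the concave neighborhood $N_D$ produces a closed symplectic manifold $X = W \cup N_D$ in which $D$ embeds symplectically with minimal complement; rigidity forces $(X, D, \omega)$ to be a Looijenga pair, so Theorem~\ref{thm:list} applies. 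Finiteness of minimal fillings up to symplectic deformation is inherited via this correspondence from the finiteness of deformation classes of Looijenga pairs in Corollary~\ref{cor: finite deformation}. The vanishing $c_1(W, \omega_W) = 0$ is then immediate from $c_1(X, \omega) = PD_X[D]$ and the disjointness of $W$ from $D$.

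\textbf{Betti number computations.} By Lemma~\ref{lemma:U+V} applied to $X = W \cup_{Y_D} N_D$, $b_3(W) = 0$ follows from $b_3(N_D) = 0$ (since $N_D$ retracts to a 2-complex) and $b_1(X) = 0$ (rationality of $X$). The remaining numerics proceed from the long exact sequence of the pair $(X, W)$, combined with the excision and Poincar\'e--Lefschetz duality identifications $H_3(X, W;\QQ) \cong H^1(N_D;\QQ) = \QQ$ and $H_2(X, W;\QQ) \cong H^2(N_D;\QQ) = \QQ^{r(D)}$:
\[
0 \to \QQ \to H_2(W;\QQ) \to H_2(X;\QQ) \xrightarrow{A} \QQ^{r(D)} \to H_1(W;\QQ) \to 0.
\]
The map $A$ is Poincar\'e-dual on the closed $X$ to the inclusion $B\colon H_2(N_D;\QQ) \to H_2(X;\QQ)$, so $\rk A = \rk B$. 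The pivotal computation is $\rk B = r(D) - b^0(D)$, equivalently $\ker B = \ker Q_D$ over $\QQ$. One inclusion $\ker B \subseteq \ker Q_D$ is elementary: $\iota_\ast\alpha = 0$ in $H_2(X)$ implies $\alpha \cdot [C_i] = 0$ in $N_D$ for each $i$. The reverse inclusion is verified via the classification of Theorem~\ref{thm:list}: only family (2), $D = (1,1,p)$, has $b^0(D) > 0$, and Lemma~\ref{lemma:topological cyclic} (4)(5) gives $[C_1] = [C_2]$ in $H_2(X)$ (with $[C_3] = [C_1]$ additionally when $p = 1$), whence a basis for $\ker Q_D$ visibly maps to zero in $H_2(X)$.

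\textbf{Assembly.} Once $\rk B = r(D) - b^0(D)$ is established, the exact sequence above yields $b_1(W) = b^0(D)$ and $b_2(W) = 1 + b_2(X) - r(D) + b^0(D)$. Using $b_1(Y_D) = 1 + b^0(D)$ (from $H_1(Y_D) = \ZZ \oplus \Coker Q_D$ in Lemma~\ref{lem: property of continuous fraction}), the relation $b^0(W) + b_1(W) = b_1(Y_D)$ from Lemma~\ref{lemma:U+V} gives $b^0(W) = 1$. Finally, additivity of Euler characteristic and signature under $X = W \cup_{Y_D} N_D$, combined with Lemma~\ref{lem: homology of neighborhood} which supplies $e(W) = q(D)$ and $\sigma(W) = 2 - q(D) - b^0(D)$ when $b^+(D) = 1$, pins down $b^+(W) = 0$ and $b^-(W) = q(D) - 2 + b^0(D)$. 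Substituting $b^0(D) = 0$ for families (1), (3), (4) and $b^0(D) \in \{1, 2\}$ for family (2) recovers the bulleted case-by-case formulas. The principal obstacle is the verification of the equality $\ker B = \ker Q_D$, for which we rely on the topological restrictions assembled in Lemma~\ref{lemma:topological cyclic}; all other steps are standard algebraic topology combined with the Looijenga pair structure provided by rigidity.
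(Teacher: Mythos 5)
Your proposal is correct and its first half (fillability, finiteness, $c_1=0$) follows the paper's argument essentially verbatim: glue a putative minimal filling $W$ to a divisor cap $N_D$, use Lemma \ref{lemma:b^+ leq 1} and Lemma \ref{lemma:rational-embed} to conclude the closed-up manifold is rational, and invoke rigidity from Theorem \ref{thm:embeddable=rigid} to recognize a Looijenga pair, with finiteness from Corollary \ref{cor: finite deformation}. One step you should not leave implicit: passing from ``$(W,\omega_W)$ is a minimal symplectic filling'' to ``$X-D$ is minimal'' is exactly the hypothesis needed to apply rigidity, and the paper justifies it by citing Proposition 3.1 of \cite{LiMa16-kodaira} (a minimal filling contains no smoothly embedded $(-1)$-sphere in its interior); add that reference or an equivalent argument.

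Where you genuinely diverge is the Betti-number bookkeeping, and your route is sound. The paper gets $b^+(W)=0$ from the Looijenga structure, reads off $b^-(W)=-\sigma(W)=q(D)+b^0(D)-2$ from Novikov additivity via Lemma \ref{lem: homology of neighborhood}, and in the degenerate case $D\sim(1,1,p)$ solves the two-equation system coming from $e(W)$ and Lemma \ref{lemma:U+V} for $b_1(W)$ and $b^0(W)$. You instead extract $b_1(W)=b^0(D)$ and $b_2(W)$ directly from the long exact sequence of $(X,W)$, which requires the additional identification $\ker\bigl(H_2(N_D;\QQ)\to H_2(X;\QQ)\bigr)=\ker Q_D$; you verify this from the homology-class constraints of Lemma \ref{lemma:topological cyclic} (only family (2) is degenerate, and there $[C_1]=[C_2]$, with $[C_3]=[C_1]$ when $p=1$), and then recover $b^+(W)=0$ and $b^-(W)$ as outputs of the $e/\sigma$ computation rather than as inputs. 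Both routes give the same numbers; yours makes $b_1(W)=b_1(Y_D)-1$ more transparent at the cost of the $\ker B=\ker Q_D$ verification, which you carry out only for the standard representatives $(1,1,p)$ --- you should either check it for an arbitrary divisor in the toric equivalence class (e.g.\ $(0,0,0,0)$ via Lemma \ref{lem: non-negative components}(3)) or note that, since the Betti numbers of $W$ are intrinsic and $(-Y_D,\xi_D)$ depends only on the toric equivalence class by Proposition \ref{prop:contact toric eq}, one may always glue the cap of the standard representative.
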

\begin{proof}
	If $ D $ is toric equivalent to one in Theorem \ref{thm:list}, then it admits an anticanonical symplectic embedding into a symplectic rational surface $ (X,\omega ) $. By Proposition \ref{prop: convex-concave}, up to local symplectic deformation, there is a concave symplectic neighborhood $ N_D $ of $ D $ with contact boundary $ (-Y_D,\xi_D) $, then $ X-Int(N_D) $ is a symplectic filling of $ (-Y_D,\xi_D) $.
	
	Now suppose $ (-Y_D,\xi_D ) $ is symplectic fillable and let $ (W,\omega_W ) $ be any minimal symplectic filling. Let $ z,a $ be a pair of vectors satisfy the positive GS criterion $ Q_Dz=a $. Then we have a divisor cap $ (N_D,\omega(z)) $ of $ (-Y_D,\xi_D) $. Glue $ (W,\omega_W ) $ with $ (N_D,\omega(z)) $ to get a closed manifold $ (X,\omega ) $. By Lemma \ref{lemma:b^+ leq 1} and \ref{lemma:rational-embed}, we have that $ b^+(D)=1 $ and $ X $ is a rational surface and $ D $ is toric equivalent to one listed in Theorem \ref{thm:list}. 
	
	Since $ (W,\omega_W) $ is a minimal symplectic filling, there is no smoothly embedded sphere of self-intersection $ -1 $ in $ Int(W) $ by Proposition 3.1 in \cite{LiMa16-kodaira}. So $ X-D $ is minimal since it is diffeomorphic to $ X-N_D=Int(W) $.
	Because $ D $ is a rationally embeddable circular divisor with $ b^+(D)= 1 $, $ D $ is rigid and thus $ (X,D,\omega ) $ is a symplectic Looijenga pair. So we must have $ c_1=b^+=b_3=0 $ for $ (W,\omega_W) $ by Lemma \ref{lemma:U+V}. The finiteness of symplectic deformation types follows from Corollary \ref{cor: finite deformation}.

	Now we compute other Betti numbers of $ W $. By Lemma \ref{lem: homology of neighborhood}, we have $e(W)=q(D)$ and $\sigma(W)=2-q(D)-b^0(D)$.
	Since $b^+(W)=0$, we have that $b^-(W)=-\sigma(W)=q(D)+b^0(D)-2$. By Lemma \ref{lemma:toric eq topological}, $b^0(D)$ is invariant under toric equivalence. Then Proposition \ref{prop:unique-contact} implies that both $q(D)$ and $b^0(D)$ depend only on $(-Y_D,\xi_D)$. So all minimal symplectic fillings have the same $b^\pm(W), b^0(W)$ and a unique rational homology type.

	When $Q_D$ is nonsingular, we have by Lemma \ref{lem: homology of neighborhood} that $b_1(W)=0$, $b_2(W)=b_2(X)+1-r(D)=10-D^2-r(D)+1=q(D)-1$ and also $b^-(W)=q(D)-2$. 
	
	When $Q_D$ is singular, we have $D$ is toric equivalent to $(1,1,p)$ with $p\le 1$. 
	By Lemma \ref{lemma:U+V}, we still have \[
		1-b_1(W)+b^0(W)+b^-(W)=e(W)=q(D) \text{ and }	b^0(W)-b_1(Y)+b_1(W)=0.\]
	These two equations imply that \[
	b^0(W)=\dfrac{1}{2}(b_1(Y)+q(D)-b^-(W)-1)\text{ and } b_1(W)=\dfrac{1}{2}(b_1(Y)-q(D)+b^-(W)+1).	
	\]
	Note again that toric equivalence preserves $b^+(D),b^0(D)$ by Lemma \ref{lemma:toric eq topological}. \begin{itemize}
		\item When $p=1$ we have $b_1(Y_D)=3$, $b^0(D)=2$, and thus $b^-(W)=0$. Then $b^0(W)=1$ and $b_1(W)=2$.
		\item When $p<1$ we have $b_1(Y_D)=2$, $b^0(D)=1$, and thus $b^-(W)=q(D)-1$. Then $b^0(W)=b_1(W)=1$.
	\end{itemize}  
\end{proof}

Next we identify minimal symplectic fillings with Stein fillings up to symplectic deformation equivalence. This finishes the proof of Theorem \ref{thm:embeddable=fillable}.
\begin{theorem}\label{thm: as a support of ample line bundle}
	For  a symplectic Looijenga pair $(X, D, \omega)$ with 
	$b^+(Q_D)= 1$,     there exists a K\"ahler Looijenga pair $(\overline{X},\overline{D},\overline{\omega})$ in its  symplectic deformation  class
	such that $\overline{D}$ is the support of an ample line bundle. 
	Then every minimal symplectic fillings of $ (-Y_D,\xi_D ) $ is symplectic deformation equivalent to a Stein filling.
\end{theorem}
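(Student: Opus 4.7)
The plan is to establish the two assertions in sequence: (i) existence of a K\"ahler representative $(\bar X,\bar D,\bar \omega)$ in the symplectic deformation class of $(X,D,\omega)$ with $\bar D$ the support of an ample line bundle, and (ii) the deformation equivalence of every minimal symplectic filling with a Stein filling.

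For (i), I would start with any K\"ahler pair in the symplectic deformation class, which exists by Lemma~\ref{thm: symplectic deformation class=homology classes}, and then seek positive integers $m_i$ making $L=\sum_i m_i\bar C_i$ ample on $\bar X$. Since $b^+(Q_D)=1$, Proposition~\ref{prop: convex-concave} guarantees $D$ is concave, so the positive GS criterion produces positive rationals $z_i$ with $(Q_Dz)_j>0$ for all $j$. Clearing denominators yields $m_i\in\mathbb{Z}_{>0}$ with $L\cdot\bar C_j>0$ for every $j$ and, consequently, $L^2=\sum_j m_j(L\cdot\bar C_j)>0$. By Nakai--Moishezon, it then remains only to verify $L\cdot C>0$ for every irreducible curve $C$ not among the $\bar C_i$. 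Since $L\cdot C=\sum m_i(\bar C_i\cdot C)\ge 0$ automatically by positivity of intersection, the only obstruction is a curve $C$ disjoint from $\bar D$, which must satisfy $C\cdot K_{\bar X}=0$ and hence $C^2=2g(C)-2$ by adjunction.

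Such $C$ with $C^2\ge 0$ and $g(C)\ge 1$ is ruled out via the light cone lemma applied in the $b^+=1$ lattice to $C$ and $[\bar D]=-K_{\bar X}$. The remaining case---irreducible smooth rational $(-2)$-curves disjoint from $\bar D$---is the main technical obstacle. I would handle it by choosing a generic K\"ahler representative within the deformation class, using Theorem~\ref{thm: finite kahler deformation} (finiteness of deformation types) together with the fact that the existence of such $(-2)$-curves is a proper closed condition on the K\"ahler moduli. An alternative strategy is to build $(\bar X,\bar D)$ inductively from the minimal models in Theorem~\ref{thm:minimal model}---in each of which one can directly verify that $-K$ is ample (e.g.\ three lines summing to $3H$ in $\mathbb{CP}^2$ for (B3), and the anti-canonical classes $2f_1+2f_2$ and $f+2s$ for the (C*) and (D*) families on $S^2\times S^2$ and $\mathbb{F}_1$ respectively)---and then show that toric and non-toric blow-ups at suitably chosen points preserve the ample-supporting property with appropriate adjustment of the coefficients $m_i$.

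For (ii), once $L$ is ample with $\mathrm{supp}(L)=\bar D$, the complement $\bar X\setminus\bar D$ is affine, hence Stein. Taking a Hermitian metric $h$ on $\mathcal O_{\bar X}(L)$ and a section $s$ with zero locus $\bar D$, the function $\psi=-\log\|s\|_h^2$ is a strictly plurisubharmonic exhaustion; for $c$ large, the sublevel set $\bar W=\{\psi\le c\}$ is a compact Stein domain whose complement in $\bar X$ is a concave plumbing neighborhood of $\bar D$. By Proposition~\ref{prop:unique-contact}, the boundary contact structure of $\bar W$ agrees with $\xi_D$, so $\bar W$ is a Stein filling of $(-Y_D,\xi_D)$. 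Given an arbitrary minimal symplectic filling $(W,\omega_W)$, capping with a concave neighborhood $N_D$ produces a closed symplectic Looijenga pair by the rigidity part of Theorem~\ref{thm:embeddable=rigid}; this pair is symplectic deformation equivalent to $(\bar X,\bar D,\bar\omega)$ by Lemma~\ref{thm: symplectic deformation class=homology classes}, and choosing a family of concave neighborhoods compatible with the deformation yields a symplectic deformation equivalence between $W$ and the Stein filling $\bar W$.
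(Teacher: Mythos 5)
Your overall strategy coincides with the paper's: pass to a K\"ahler representative via Lemma \ref{thm: symplectic deformation class=homology classes}, use concavity ($b^+(Q_D)=1$, Proposition \ref{prop: convex-concave}) to produce a positive combination $L=\sum m_i\bar C_i$ with $L\cdot\bar C_j>0$ and $L^2>0$, verify Nakai--Moishezon by excluding curves disjoint from $\bar D$, and then deduce the statement about fillings by capping off and invoking rigidity. However, there is a genuine gap exactly at the step you yourself identify as the main technical obstacle: ruling out smooth rational $(-2)$-curves disjoint from $\bar D$. Your proposed resolution --- combining Theorem \ref{thm: finite kahler deformation} with ``the fact that the existence of such $(-2)$-curves is a proper closed condition on the K\"ahler moduli'' --- does not work as stated. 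Finiteness of deformation types says nothing about how $(-2)$-curves behave \emph{within} a single deformation family, and the assertion that the locus supporting such a curve is a \emph{proper} closed subset is precisely what must be proved: one needs to know that for each class $e$ with $e^2=-2$, $e\cdot K=0$ and $e\cdot[\bar C_i]=0$, some member of the family fails to represent $e$ by an effective curve. This is the content of Proposition 4.1 of \cite{GrHaKe12} (existence of \emph{generic} anticanonical pairs), which is what the paper cites at this point; without that input your argument is circular. Your alternative inductive strategy from the minimal models of Theorem \ref{thm:minimal model} also does not go through without further work, since a non-toric blow-up replaces $L$ by $\pi^*L-m_iE$, and the proper transform of a curve through the centre of the blow-up can then pair non-positively with the new divisor unless the centre is chosen generically --- which returns you to the same genericity question.

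A secondary, more easily repaired slip: to exclude curves $C$ disjoint from $\bar D$ with $C^2\ge 0$ and $g(C)\ge 1$ you propose applying the light cone lemma to $C$ and $[\bar D]=-K_{\bar X}$, but $K_{\bar X}^2$ can be negative for a Looijenga pair (after enough blow-ups), so that pair need not lie in the light cone. The correct class to play against $C$ is $L=\sum m_i\bar C_i$ itself, which you have already arranged to satisfy $L^2>0$ and $L\cdot C=0$; the Hodge index theorem then forces $C^2<0$, which together with adjunction ($K\cdot C=0$) leaves only the $(-2)$-rational case. (The paper handles the putative square-zero elliptic curve by a separate light-cone argument against a square-zero component of $\bar D$; using $L$ makes that case moot.) Your treatment of the second assertion --- affineness of $\bar X\setminus\bar D$, the plurisubharmonic exhaustion, identification of the boundary contact structure via Proposition \ref{prop:unique-contact}, and the capping/rigidity argument --- matches the paper and is fine.
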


\begin{proof}
	
	Let $(X,D,\omega)$ be a symplectic Looijenga pair. By Lemma \ref{thm: symplectic deformation class=homology classes} there is an holomorphic Looijenga pair $(\overline{X}',\overline{D}',\overline{\omega}')$ symplectic deformation equivalent to $ (X,D,\omega ) $.
	By Proposition  \ref{prop: convex-concave}, $\overline{\omega}'$ is exact on $\partial P(\overline{D})$ and there is another symplectic form $ \overline{\omega} $ deformation equivalent to $ \overline{\omega }' $ such that $Q_D z = a$, where $ z=(z_1,\dots,z_r)\in (\RR_+)^r $ and $a=([\overline{\omega}]\cdot [\overline{C_1}],\dots,[\overline{\omega}]\cdot [\overline{C_r}])$. It means that $\tilde{D}=\sum\limits_{i=1}^r z_i[\overline{C_i}]$ pairs positively with all $[\overline{C_i}]$ and in particular $ \tilde{D}^2>0 $.

	Moreover, by Proposition 4.1 of \cite{GrHaKe12}, we can choose a complex structure 
	such that $(\overline{X},\overline{D})$ is a generic pair, which means there is no smooth rational curve of self-intersection $ -2 $ disjoint from $ D $ (Definition 1.4 of \cite{GrHaKe12}).
	Any algebraic curve $ C $ disjoint from $ \overline{D} $ is in particular orthogonal to $ \tilde{D} $. By Hodge index theorem, we must have $ [C]\le 0 $. Then $ C $ is a self-intersection $-2$ rational curve or a self-intersection $0$ elliptic curve by adjunction formula.
	Note that $ C $ cannot be a self-intersection $-2$ rational curve by the genericity of the pair $(\overline{X},\overline{D})$.
	We claim that there is no elliptic curve in the complement of $ \overline{D} $ of self-intersection $0$. Suppose there exists such elliptic curve $ T $. Since $ b^+(Q_D)=1 $, we can assume there is a self-intersection $0$ component $ \overline{C} $ of $\overline{D}$ (possibly after toric blow-down and non-toric blow-up). Using light cone lemma, we get that $ [T]=\lambda [\overline{C}] $ for some $ \lambda > 0 $. However, $ \overline{C} $ intersects other components of $ \overline{D} $ non-trivially, and so would $ T $, which is contradiction.
	Any algebraic curve that intersects $\overline{D}$ but not contained in $\overline{D}$ has positive pairing with $\sum_{i=1}^r z_i[\overline{C_i}]$.
	Also, by the choice of $\sum_{i=1}^r z_i[\overline{C_i}]$, it pairs positively with any irreducible curve in $\overline{D}$.
	Therefore, by Nakai-Moishezon criterion, $\sum_{i=1}^r z_i[\overline{C_i}]$ is an ample divisor. Then some multiple of it would be very ample with support $\overline{D}$.
	So $ \overline{X}-\overline{D} $ is an affine surface and provides a Stein filling of $ (-Y_D,\xi_D) $.
	
	As shown in the proof of Theorem \ref{thm:fillability}, every minimal symplectic filling gives rise to a symplectic Looijenga pair $(X,D,\omega)$, which is symplectic deformation equivalent to a K\"ahler pair $(\overline{X},\overline{D},\overline{\omega})$ as above. So the minimal symplectic filling is symplectic deformation equivalent to a Stein filling.
\end{proof}


\begin{remark}\label{rmk:ohta-ono}
	A similar result for elliptic log Calabi-Yau pairs was obtained by Ohta and Ono in \cite{OhOn03-simple-elliptic}. Their results were stated for links of simple elliptic singularities, but actually concerns symplectic torus of positive self-intersection. We summarize their results as follows.\\
	Let $ D $ be a torus with $ 0<[D]^2 \le 9 $ and $ [D]^2\neq 8 $, then the minimal symplectic filling of $ (-Y_D,\xi_D ) $ is unique up to diffeomorphism. In the case $ [D]^2=8 $, there are two diffeomorphism types of minimal symplectic fillings. All these minimal fillings have $ c_1=0 $ and $ b^+=0 $. When $ [D]^2\ge 10 $, there is a unique minimal symplectic filling up to diffeomorphism, but we don't have $ c_1=0 $ in this case.
\end{remark}
%
%
%
%




\subsection{Geography of Stein fillings for convex anti-canonical $ D $}\label{section:convex Stein}

When $ Q_D $ is negative definite, the circular spherical divisor $D$ corresponds to resolutions of cusp singularities. Then $ (Y_D,\xi_D) $ is a Milnor fillable contact structure and is thus Stein fillable (\cite{Grauert1962}, see also \cite{Caubel-Nemethi-Pampu2006}).


The elliptic log Calabi-Yau pairs arise from resolutions of simple elliptic singularities. By Theorem 2 in \cite{OhOn03-simple-elliptic}, any simple elliptic singularity has either one or two minimal symplectic fillings up to diffeomorphism, arising either from a smoothing or the minimal resolution. 

It is then natural to ask what kind of finiteness holds for symplectic fillings of $(Y_D,\xi_D)$ when $D$ is convex anti-canonical circular spherical divisor. Since $(Y_D,\xi_D)$ has non-positive contact Kodaira dimension (Proposition \ref{prop: convex-concave}), the Betti numbers of exact fillings of $(Y_D, \xi_D)$ are bounded by Theorems  1.3 and 1.8 in \cite{LiMaYa14-CYcap}. Here we provide explicit Betti number bounds for their Stein fillings.

\begin{prop}\label{prop:geography}
	Let $D$ be a convex anti-canonical circular spherical divisor, i.e. $ Q_D $ negative definite. Let $(U,\omega_U)$ be a Stein filling of $(Y_D,\xi_D)$, then 
	\begin{enumerate}[\indent $ (1) $]
		\item $U$ is negative definite with $b_1(U)=1$,
		\item or $(b_2^+(U),b_2^0(U),b_1(U))= (1,1,0) $, $c_1(U)=0$, $b^-(U)=21-q(D)\leq 18$, and $3\le q(D)\le 21$,
		\item or $(b_2^+(U),b_2^0(U),b_1(U))= (2,0,1)$, $c_1(U)=0$, $b^-(U)=22-q(D)\leq 19$, and $3\le q(D)\le 22$.
	\end{enumerate}
	In particular, when $q(D)\ge 23$, $U$ must be negative definite.
\end{prop}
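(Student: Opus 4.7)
The plan is to glue the Stein filling $U$ to a Calabi--Yau cap built from a Looijenga pair containing $D$, form a closed symplectic $4$-manifold $Z$, and analyze it via its symplectic Kodaira dimension. Since $D$ is anti-canonical, there is a symplectic Looijenga pair $(X, D, \omega_X)$; as $Q_D$ is negative definite, Theorem \ref{thm:concave}(1) supplies a convex plumbing $N_D \subset X$, and the complement $P := X \setminus \mathrm{Int}(N_D)$ has $c_1(X)|_P = PD[D]|_P = 0$, hence is a Calabi--Yau cap of $(Y_D, \xi_D)$. Gluing $U$ to $P$ along $Y_D$ produces a closed symplectic $(Z, \omega_Z)$ with $c_1(Z)|_P = 0$. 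Lemma \ref{lem: homology of neighborhood} applied to $P$ (with $b^+(D) = b^0(D) = 0$) gives $e(P) = q(D)$, $\sigma(P) = 4 - q(D)$, $b_1(P) = 0$, $b^+(P) = b^0(P) = 1$, $b^-(P) = q(D) - 3$.

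Novikov additivity yields $e(Z) = e(U) + q(D)$ and $\sigma(Z) = \sigma(U) + 4 - q(D)$, while Andreotti--Frankel forces $b_3(U) = b_4(U) = 0$, and the long exact sequence of $(U, Y_D)$ bounds $b^0(U) \le b_2(Y_D) = 1$. A rational Mayer--Vietoris for $Z = U \cup_{Y_D} P$, using $b_1(Y_D) = b_2(Y_D) = 1$ and $H_1(P; \QQ) = 0$, produces (with $\alpha, \beta \in \{0,1\}$ the ranks of $H_1(Y_D; \QQ) \to H_1(U; \QQ)$ and $H_2(Y_D; \QQ) \to H_2(U; \QQ) \oplus H_2(P; \QQ)$) the constraint $b_1(U) = 1 + \alpha - \beta$ together with
\[ b^+(Z) = b^+(U) + 2 + \tfrac12\bigl(b^0(U) - \alpha - \beta\bigr), \qquad b^-(Z) = b^-(U) + q(D) - 2 + \tfrac12\bigl(b^0(U) - \alpha - \beta\bigr). \]

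Since $P$ is a Calabi--Yau cap, $\kappa^s(Z) \le 0$ by \cite{LiMaYa14-CYcap}. If $\kappa^s(Z) = -\infty$, then $Z$ is rational or ruled with $b^+(Z) = 1$; the $b^+$ formula combined with $b^+(U), b^0(U) \ge 0$ and $\alpha + \beta \le 2$ forces $b^+(U) = b^0(U) = 0$ and $\alpha = \beta = 1$, whence $b_1(U) = 1$: this is case (1). If $\kappa^s(Z) = 0$, Li's classification \cite{Li06-kod-0} shows $Z$ has the rational cohomology of $K3$, $T^4$, Enriques, or a torus bundle over a torus. The $T^4$-like and nontrivial torus-bundle cases require $b_1(Z) \ge 2$, contradicting $b_1(Z) = b_1(U) - \alpha = 1 - \beta \le 1$; the Enriques-like case gives the numerics of case (1). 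The remaining $K3$-like case ($b^+(Z) = 3, b_1(Z) = 0, e(Z) = 24$) combined with $b^0(U) \le 1$ leaves exactly two integer solutions: $(b^+(U), b^0(U), b_1(U), \alpha, \beta) = (1, 1, 0, 0, 1)$ with $b^-(U) = 21 - q(D)$ (case (2)) and $(2, 0, 1, 1, 1)$ with $b^-(U) = 22 - q(D)$ (case (3)); in both, $c_1(Z)$ is torsion and the Stein structure on $U$ upgrades this to $c_1(U) = 0$. The lower bound $q(D) \ge 3$ is \cite[Lemma 4.3]{FM83}, and the upper bounds $q(D) \le 21, 22$ follow from $b^-(U) \ge 0$. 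The concluding statement is immediate: $q(D) \ge 23$ rules out cases (2) and (3), leaving only case (1). The hardest part will be the Mayer--Vietoris bookkeeping identifying $\alpha, \beta$ and the final $c_1(U) = 0$ upgrade in the $K3$-like subcases, both requiring careful accounting of torsion in $H^*(U; \Z)$.
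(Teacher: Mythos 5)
Your proposal is correct and follows essentially the same route as the paper: cap off the Stein filling $U$ with the Calabi--Yau complement $X-N_D$ of a Looijenga pair, use the Calabi--Yau cap property together with $b_1\le 1$ to force the closed manifold to be (non-minimal) rational, an integral homology Enriques surface, or an integral homology $K3$, and then extract the three cases from Novikov additivity and the charge computation $e(V)=q(D)$, $\sigma(V)=4-q(D)$. The only cosmetic difference is that the paper packages your $\alpha,\beta$ Mayer--Vietoris bookkeeping into the identity $b^0(U)+b_1(U)=b_1(Y_D)=1$ (Lemma \ref{lemma:U+V}), and your constraint $b_1(U)=1+\alpha-\beta$ checks out against it.
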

\begin{proof}
	Since $D$ is anti-canonical, there exists $(X,\omega)$ such that $(X,D,\omega)$ is a symplectic Looijenga pair. Let $V=X - N_D$ be the symplectic cap obtained as the complement of $ N_D $ and $ X_U=U\cup V $ the closed symplectic manifold obtained by capping off the filling $ U $ with $ V $. 
	
	Since $U$ is Stein and in particular a $ 2 $-handlebody, we have $1=b_1(Y_D) \ge b_1(U)$. 
	The Mayer-Vietoris sequence 
	$H_1(U) \oplus H_1(V) \to H_1(X_U)\to 0$ implies that $b_1(X_U)\le b_1(U)+b_1(V)\le 1+b_1(V)$. By Lemma \ref {lem: homology of neighborhood}, $b_1(V)=0$, so $b_1(X_U) \leq 1$. 
	Since $V$ is a Calabi-Yau cap as $c_1(V)=0$, $ X_U $ must be a symplectic non-minimal rational or minimal Calabi-Yau surface by Lemma 2.8 of \cite{LiMaYa14-CYcap}. It then follows from $b_1(X_U)\leq 1$ and  Theorem 1.1 of \cite{Li06-betti} that 
	\begin{enumerate}[\indent $ (1) $]
		\item either $b^+(X_U)=1, b_1(X_U)=0$ and $X_U$ is non-minimal rational or a minimal integral homology Enrique surface,
		\item or $b^+(X_U)=3, b_1(X_U)=0$ and $X_U$ is a minimal integral homology $K3$ surface.
	\end{enumerate}


	So we always have $b_1(X_U)=0$ and by Lemma \ref{lemma:U+V}, $b^0(U)+b_1(U)=b_1(Y_D)=1$. By Lemma \ref{lem: homology of neighborhood}, we also have $ e(V)=q(D)$ and $\sigma(V)=\sigma(V)=4-q(D)-2b^+(D)-b^0(D)=-q(D)+4 $ since $D$ negative definite.
	By additivity, we get \begin{align*}
		b^+(U)-b^-(U)= &\sigma(U)=\sigma(X_U)+q(D)-4,\\
		b^+(U)+b^-(U)= & e(U)-2b^0(U)=b_2(X_U)+2-q(D)-2b^0(U).
	\end{align*}
	Then we have \begin{align*}
		b^+(U)= & b^+(X_U)-b^0(U)-1,\\
		b^-(U)= & 3-q(D)-b^0(U)+b^-(X_U).
	\end{align*}
	Note that if $b^+(X_U)=3$, then $c_1(X_U)=0$ and thus $c_1(U)=0$. By Lemma 4.3 of \cite{FM83}, we have that $q(D)\ge 3$.
	\begin{itemize}
		\item If $b^0(U)=1$, then $b^+(X_U)\ge 2$, which implies that $X_U$ must be an integer homology K3 with $b^+(X_U)=3$ and $b^-(X_U)=19$. Then we have $c_1(U)=0$, $b_1(U)=0$, $b^+(U)=1$, $b^-(U)=21-q(D)\le 18$ and $3\le q(D)\le 21$.
		\item If $b^0(U)=0$ and $b^+(X_U)=3$, we have $c_1(U)=0$, $b_1(U)=1$, $b^+(U)=2$, $b^-(U)=22-q(D)\le 19$ and $3\le q(D)\le 22$.
		\item If $b^0(U)=0$ and $b^+(X_U)=1$, we have $U$ is negative definite, $b_1(U)=1$, and $e(U)=b^-(U)=3+b^-(X_U)-q(D)$.
	\end{itemize}
	
\end{proof}

Finally, we discuss the potential implication of Proposition \ref{prop:geography} for Stein fillings of   cusp singularities. 
By Looijenga conjecture, a cusp singularity is smoothable if and only if it has a rational dual.
There is an analytic space $ X $ containing a cusp singularity and its dual. It is shown in Proposition 2.8 of \cite{Lo81} that if we smooth the cusp and resolve its dual cusp, we then get a rational surface. Since the resolution part is negative definite, we conclude that a smoothing of a cusp singularity gives a Stein filling of $ (Y_D,\xi_D) $ with $ b^+=1 $. 
In light of this, we make the following symplectic/contact analogue of the Looijenga conjecture.
\begin{conjecture}
	If a  cusp  singularity  does not have  a rational dual, then  it admits only negative definite Stein fillings.
\end{conjecture}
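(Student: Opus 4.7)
The plan is to prove the contrapositive: if $(Y_D,\xi_D)$ admits a Stein filling $U$ with $b^+(U)\ge 1$, then the dual cycle $\check D$ can be realized as the anti-canonical divisor of a rational surface, whence by the (now proved) Looijenga conjecture \cite{GrHaKe11} the original cusp is smoothable, contradicting the hypothesis that it has no rational dual. The strategy is thus to promote any non--negative-definite Stein filling of $(Y_D,\xi_D)$ to a symplectic Looijenga pair $(X,\check D,\omega)$ via a capping argument dual to the one in Proposition~\ref{prop:geography}.

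The first step is to manufacture a symplectic cap $V$ of $(Y_D,\xi_D)$ whose underlying topological 4-manifold is the dual plumbing $N_{\check D}$ (or is at least built from a cycle of spheres representing $\check D$). Heuristically, $Y_{\check D}\cong -Y_D$ as oriented 3-manifolds, and both $(Y_D,\xi_D)$ and $(Y_{\check D},\xi_{\check D})$ are the \emph{unique} Milnor fillable contact structures on their respective 3-manifolds, so $N_{\check D}$ (which is by default a convex filling of $(Y_{\check D},\xi_{\check D})$) is a natural candidate for a concave cap of $(Y_D,\xi_D)$ after ``flipping its convexity''. Because $\check D$ is negative definite and does not satisfy the positive GS criterion, the concave plumbing machinery of Section~\ref{section:divisor nbhd} does not apply, so the cap must be produced by other means. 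Two candidate routes: (a) take a Lefschetz fibration or open book supporting $(Y_D,\xi_D)$ (available from \cite{LiMi-ICCM,VHM-thesis} for torus bundles) and assemble $V$ by Weinstein handle attachments realizing $\check D$ in the interior; or (b) exhibit $V$ as a neighborhood of $\check D$ in an auxiliary symplectic 4-manifold by a direct gluing construction modeled on the analytic setup of \cite{Lo81}.

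Once the cap $V\supset\check D$ is in hand, form the closed symplectic manifold $X=U\cup_{Y_D}V$ and analyze it. A Mayer--Vietoris computation analogous to Lemma~\ref{lem: homology of neighborhood}, together with $b^+(U)\ge 1$, $b_1(Y_D)=1$, and the Stein constraints on $H_*(U)$, should force $b^+(X)=1$ and $b_1(X)=0$. By Lemma~\ref{lemma:rational-embed} this makes $X$ a symplectic rational surface. The construction of $V$ is arranged so that $c_1(V)$ equals $PD[\check D]$; combined with the behavior of $c_1(U)$ on the Stein side (where $c_1$ is controlled by the contact structure $\xi_D$ at the boundary), this gives $c_1(X)=PD[\check D]$, exhibiting $(X,\check D,\omega)$ as a symplectic Looijenga pair. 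Theorem~\ref{thm:embeddable=rigid} then certifies that $\check D$ is rigidly anti-canonical, and the Looijenga conjecture supplies a smoothing of the original cusp, completing the contrapositive.

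The main obstacle is unquestionably the cap construction in the second paragraph: negative-definite $\check D$ sits naturally inside a convex filling, not a concave cap, and the GS criterion flatly fails on the concave side, so one must leave the plumbing framework developed here. A closely related difficulty is pinning down $c_1(V)$, which in the absence of a canonical complex structure must be computed by hand along the chosen cobordism or Lefschetz fibration. If both ingredients can be arranged, the remaining closed-manifold analysis is essentially a variant of the argument already used in Proposition~\ref{prop:geography}, so these two pieces constitute the core of the conjecture.
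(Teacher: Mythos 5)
This statement is an open conjecture in the paper, not a theorem: the authors prove it only for cusp singularities with $q(D)\ge 23$, by combining Proposition \ref{prop:geography} (which forces Stein fillings to be negative definite once $q(D)\ge 23$) with the charge obstruction $q(D)\le 21$ for divisors admitting a rational dual from \cite{FM83}. Your proposal does not close the general case, and it contains two genuine gaps beyond the one you flag yourself.

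First, the cap construction is not merely "outside the plumbing framework" --- it is obstructed in principle within it. The dual cycle $\check D$ is negative definite (all its self-intersections are $\le -2$ with at least one $\le -3$, so Lemma \ref{lem: not negative semi-definite => at least one non-negative} applies), and a negative definite divisor can never satisfy the positive GS criterion: if $Q_{\check D}z=a$ with $z,a\in(\RR_+)^r$ then $z^TQ_{\check D}z=z^Ta>0$, contradicting negative definiteness. Proposition \ref{prop: convex-concave} only ever produces a \emph{convex} neighborhood of $\check D$. Your routes (a) and (b) are research directions, not arguments; no concave cap of $(Y_D,\xi_D)$ containing $\check D$ as a symplectic divisor is produced, and producing one is essentially the content of the conjecture (it would amount to reversing Looijenga's smoothing-plus-resolution construction from \cite{Lo81}).

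Second, even granting the cap and that $X=U\cup_{Y_D}V$ is a rational surface containing $\check D$, your final step fails: Theorem \ref{thm:embeddable=rigid} is stated only for $b^+(D)\ge 1$, whereas $b^+(Q_{\check D})=0$. The paper shows this hypothesis is sharp --- Examples \ref{example:negative semidef} and \ref{example:negative def} exhibit negative definite and strictly negative semi-definite divisors that are symplectically embeddable but not anti-canonical, and whether rational embeddability implies anti-canonical for negative definite divisors is itself posed as an open Question at the end of Section 4.2. So embedding $\check D$ in a rational surface would not certify that it is anti-canonical there, which is what the contrapositive requires. Relatedly, the claim that $c_1(U)$ "is controlled by $\xi_D$ at the boundary" only constrains $c_1(\xi_D)=c_1(U)|_{Y_D}$, not $c_1(U)$ itself, so the identity $c_1(X)=PD[\check D]$ does not follow from the boundary data; in Proposition \ref{prop:geography} the vanishing of $c_1(U)$ in the non-negative-definite cases is deduced from $X_U$ being a homology $K3$, a mechanism unavailable in your rational-surface scenario.
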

By Theorem 4.5 of \cite{FM83}, if $q(D)\ge 22$ then $D$ doesn't have a rational dual. Then Proposition \ref{prop:geography} proves the conjecture for rational cups singularities with $q(D)\ge 23$. In particular, given any negative-define anti-canonical divisor,  if we perform $20$ non-toric blow-ups (to increase the charge while remaining negative-definite and anti-canonical), the resulting contact torus bundle has only negative definite Stein fillings. 


\bibliographystyle{amsplain}
\bibliography{logCY-sequence}{}

\providecommand{\bysame}{\leavevmode\hbox to3em{\hrulefill}\thinspace}
\providecommand{\MR}{\relax\ifhmode\unskip\space\fi MR }
\providecommand{\MRhref}[2]{%
  \href{http://www.ams.org/mathscinet-getitem?mr=#1}{#2}
}
\providecommand{\href}[2]{#2}
\begin{thebibliography}{10}

\bibitem{Barth15-compact}
Wolf~P. Barth, Klaus Hulek, Chris A.~M. Peters, and Antonius Van~de Ven,
  \emph{Compact complex surfaces}, second ed., Ergebnisse der Mathematik und
  ihrer Grenzgebiete. 3. Folge. A Series of Modern Surveys in Mathematics
  [Results in Mathematics and Related Areas. 3rd Series. A Series of Modern
  Surveys in Mathematics], vol.~4, Springer-Verlag, Berlin, 2004. \MR{2030225}

\bibitem{BhOz14}
Mohan Bhupal and Burak Ozbagci, \emph{Canonical contact structures on some
  singularity links}, Bull. Lond. Math. Soc. \textbf{46} (2014), no.~3,
  576--586. \MR{3210714}

\bibitem{Biran-packing}
P.~Biran, \emph{Symplectic packing in dimension {$4$}}, Geom. Funct. Anal.
  \textbf{7} (1997), no.~3, 420--437. \MR{1466333}

\bibitem{Caubel-Nemethi-Pampu2006}
Cl\'{e}ment Caubel, Andr\'{a}s N\'{e}methi, and Patrick Popescu-Pampu,
  \emph{Milnor open books and {M}ilnor fillable contact 3-manifolds}, Topology
  \textbf{45} (2006), no.~3, 673--689. \MR{2218761}

\bibitem{CD-Enriques}
Fran\c{c}ois~R. Cossec and Igor~V. Dolgachev, \emph{Enriques surfaces. {I}},
  Progress in Mathematics, vol.~76, Birkh\"{a}user Boston, Inc., Boston, MA,
  1989. \MR{986969}

\bibitem{DiGe01-fillability}
Fan Ding and Hansj\"{o}rg Geiges, \emph{Symplectic fillability of tight contact
  structures on torus bundles}, Algebr. Geom. Topol. \textbf{1} (2001),
  153--172. \MR{1823497}

\bibitem{DiLi18-torusbundle}
Fan Ding and Youlin Li, \emph{Strong symplectic fillability of contact torus
  bundles}, Geom. Dedicata \textbf{195} (2018), 403--415. \MR{3820513}

\bibitem{seppi-li-wu-stability}
Josef~G. Dorfmeister, Tian-Jun Li, and Weiwei Wu, \emph{Stability and existence
  of surfaces in symplectic 4-manifolds with {$b^+=1$}}, J. Reine Angew. Math.
  \textbf{742} (2018), 115--155. \MR{3849624}

\bibitem{EbWall1985}
W.~Ebeling and C.~T.~C. Wall, \emph{Kodaira singularities and an extension of
  {A}rnol'd's strange duality}, Compositio Math. \textbf{56} (1985), no.~1,
  3--77. \MR{806842}

\bibitem{Ecap}
Yakov Eliashberg, \emph{A few remarks about symplectic filling}, Geom. Topol.
  \textbf{8} (2004), 277--293. \MR{2023279}

\bibitem{En}
Philip Engel, \emph{Looijenga's conjecture via integral-affine geometry}, J.
  Differential Geom. \textbf{109} (2018), no.~3, 467--495. \MR{3825608}

\bibitem{EHcap}
John~B. Etnyre and Ko~Honda, \emph{On symplectic cobordisms}, Math. Ann.
  \textbf{323} (2002), no.~1, 31--39. \MR{1906906}

\bibitem{Fr}
Robert Friedman, \emph{On the geometry of anticanonical pairs}, Preprint
  arXiv:1502.02560 (2016).

\bibitem{FM83}
Robert Friedman and Rick Miranda, \emph{Smoothing cusp singularities of small
  length}, Math. Ann. \textbf{263} (1983), no.~2, 185--212. \MR{698002}

\bibitem{GaMa13-LF}
David Gay and Thomas~E. Mark, \emph{Convex plumbings and {L}efschetz
  fibrations}, J. Symplectic Geom. \textbf{11} (2013), no.~3, 363--375.
  \MR{3100798}

\bibitem{GoLi14}
Marco Golla and Paolo Lisca, \emph{On {S}tein fillings of contact torus
  bundles}, Bull. Lond. Math. Soc. \textbf{48} (2016), no.~1, 19--37.
  \MR{3455745}

\bibitem{Gom95-fibersum}
Robert~E. Gompf, \emph{A new construction of symplectic manifolds}, Ann. of
  Math. (2) \textbf{142} (1995), no.~3, 527--595. \MR{1356781}

\bibitem{Grauert1962}
Hans Grauert, \emph{\"{U}ber {M}odifikationen und exzeptionelle analytische
  {M}engen}, Math. Ann. \textbf{146} (1962), 331--368. \MR{137127}

\bibitem{GrHaKe11}
Mark Gross, Paul Hacking, and Sean Keel, \emph{Mirror symmetry for log
  {C}alabi-{Y}au surfaces {I}}, Publ. Math. Inst. Hautes \'{E}tudes Sci.
  \textbf{122} (2015), 65--168. \MR{3415066}

\bibitem{GrHaKe12}
\bysame, \emph{Moduli of surfaces with an anti-canonical cycle}, Compos. Math.
  \textbf{151} (2015), no.~2, 265--291. \MR{3314827}

\bibitem{Kod-surface3}
K.~Kodaira, \emph{On compact analytic surfaces. {II}, {III}}, Ann. of Math. (2)
  77 (1963), 563--626; ibid. \textbf{78} (1963), 1--40. \MR{0184257}

\bibitem{Li06-betti}
Tian-Jun Li, \emph{Quaternionic bundles and {B}etti numbers of symplectic
  4-manifolds with {K}odaira dimension zero}, Int. Math. Res. Not. (2006), Art.
  ID 37385, 28. \MR{2264722}

\bibitem{Li06-kod-0}
\bysame, \emph{Symplectic 4-manifolds with {K}odaira dimension zero}, J.
  Differential Geom. \textbf{74} (2006), no.~2, 321--352. \MR{2259057}

\bibitem{LiMa16-deformation}
Tian-Jun Li and Cheuk~Yu Mak, \emph{Symplectic log {C}alabi-{Y}au
  surface---deformation class}, Adv. Theor. Math. Phys. \textbf{20} (2016),
  no.~2, 351--379. \MR{3541847}

\bibitem{LiMa19-survey}
\bysame, \emph{Geometry of symplectic log {C}alabi-{Y}au pairs}, ICCM Not.
  \textbf{6} (2018), no.~2, 42--50. \MR{3961489}

\bibitem{LiMa14-divisorcap}
\bysame, \emph{Symplectic divisorial capping in dimension 4}, J. Symplectic
  Geom. \textbf{17} (2019), no.~6, 1835--1852. \MR{4057728}

\bibitem{LiMa16-kodaira}
\bysame, \emph{The {K}odaira dimension of contact 3-manifolds and geography of
  symplectic fillings}, Int. Math. Res. Not. IMRN (2020), no.~17, 5428--5449.
  \MR{4146343}

\bibitem{LiMaYa14-CYcap}
Tian-Jun Li, Cheuk~Yu Mak, and Kouichi Yasui, \emph{Calabi-{Y}au caps, uniruled
  caps and symplectic fillings}, Proc. Lond. Math. Soc. (3) \textbf{114}
  (2017), no.~1, 159--187. \MR{3653080}

\bibitem{LiMi-logkod}
Tian-Jun Li and Jie Min, \emph{Relative {K}odaira dimension for symplectic
  divisors}, In preparation.

\bibitem{LiMi-ICCM}
Tian-Jun Li and Jie Min, \emph{Local geometry of symplectic divisors with
  applications to contact torus bundles}, Preprint arXiv:2101.05981 (2021).

\bibitem{LiUs06-negative-inflation}
Tian-Jun Li and Michael Usher, \emph{Symplectic forms and surfaces of negative
  square}, J. Symplectic Geom. \textbf{4} (2006), no.~1, 71--91. \MR{2240213}

\bibitem{LiZh11-relative}
Tian-Jun Li and Weiyi Zhang, \emph{Additivity and relative {K}odaira
  dimensions}, Geometry and analysis. {N}o. 2, Adv. Lect. Math. (ALM), vol.~18,
  Int. Press, Somerville, MA, 2011, pp.~103--135. \MR{2882443}

\bibitem{Lis08-lens}
Paolo Lisca, \emph{On symplectic fillings of lens spaces}, Trans. Amer. Math.
  Soc. \textbf{360} (2008), no.~2, 765--799. \MR{2346471}

\bibitem{Lo81}
Eduard Looijenga, \emph{Rational surfaces with an anticanonical cycle}, Ann. of
  Math. (2) \textbf{114} (1981), no.~2, 267--322. \MR{632841}

\bibitem{Mc90-structure}
Dusa McDuff, \emph{The structure of rational and ruled symplectic
  {$4$}-manifolds}, J. Amer. Math. Soc. \textbf{3} (1990), no.~3, 679--712.
  \MR{1049697}

\bibitem{McOp13-nongeneric}
Dusa McDuff and Emmanuel Opshtein, \emph{Nongeneric {$J$}-holomorphic curves
  and singular inflation}, Algebr. Geom. Topol. \textbf{15} (2015), no.~1,
  231--286. \MR{3325737}

\bibitem{McDuffSalamon1996}
Dusa McDuff and Dietmar Salamon, \emph{A survey of symplectic {$4$}-manifolds
  with {$b^{+}=1$}}, Turkish J. Math. \textbf{20} (1996), no.~1, 47--60.
  \MR{1392662}

\bibitem{McL16-discrepancy}
Mark McLean, \emph{Reeb orbits and the minimal discrepancy of an isolated
  singularity}, Invent. Math. \textbf{204} (2016), no.~2, 505--594.
  \MR{3489704}

\bibitem{Miy-kahler}
Yoichi Miyaoka, \emph{K\"{a}hler metrics on elliptic surfaces}, Proc. Japan
  Acad. \textbf{50} (1974), 533--536. \MR{460730}

\bibitem{Mumford1961}
David Mumford, \emph{The topology of normal singularities of an algebraic
  surface and a criterion for simplicity}, Inst. Hautes \'{E}tudes Sci. Publ.
  Math. (1961), no.~9, 5--22. \MR{153682}

\bibitem{Ne81-calculus}
Walter~D. Neumann, \emph{A calculus for plumbing applied to the topology of
  complex surface singularities and degenerating complex curves}, Trans. Amer.
  Math. Soc. \textbf{268} (1981), no.~2, 299--344. \MR{632532}

\bibitem{OhOn03-simple-elliptic}
Hiroshi Ohta and Kaoru Ono, \emph{Symplectic fillings of the link of simple
  elliptic singularities}, J. Reine Angew. Math. \textbf{565} (2003), 183--205.
  \MR{2024651}

\bibitem{OhtOn05-cusp}
\bysame, \emph{Symplectic 4-manifolds containing singular rational curves with
  {$(2,3)$}-cusp}, Singularit\'{e}s {F}ranco-{J}aponaises, S\'{e}min. Congr.,
  vol.~10, Soc. Math. France, Paris, 2005, pp.~233--241. \MR{2145957}

\bibitem{Shioda-modular}
Tetsuji Shioda, \emph{On elliptic modular surfaces}, J. Math. Soc. Japan
  \textbf{24} (1972), 20--59. \MR{429918}

\bibitem{Shioda-elliptic}
\bysame, \emph{Elliptic surfaces and {D}avenport-{S}tothers triples}, Comment.
  Math. Univ. St. Pauli \textbf{54} (2005), no.~1, 49--68. \MR{2153955}

\bibitem{VHM-thesis}
Jeremy Van Horn-Morris, \emph{Constructions of open book decompositions},
  ProQuest LLC, Ann Arbor, MI, 2007, Thesis (Ph.D.)--The University of Texas at
  Austin. \MR{2710779}

\bibitem{Zhang-curve}
Weiyi Zhang, \emph{The curve cone of almost complex 4-manifolds}, Proc. Lond.
  Math. Soc. (3) \textbf{115} (2017), no.~6, 1227--1275. \MR{3741851}

\end{thebibliography}

\end{document}